\documentclass[ijoc,sglanonrev]{informs4}
\usepackage{eqndefns-left} 
\RequirePackage{tgtermes}
\RequirePackage{newtxtext}
\RequirePackage{newtxmath}
\RequirePackage{bm}
\RequirePackage{endnotes}

\usepackage{amsmath,amssymb,amsfonts}

\usepackage{hyperref}

\usepackage{url}
\usepackage{lineno}

\OneAndAHalfSpacedXII 

\usepackage{algorithm}
\usepackage{algpseudocode}
\usepackage{tikz}

\usepackage{natbib}
\bibpunct[, ]{(}{)}{,}{a}{}{,}%
\def\bibfont{\small}%

\usepackage{rotating}
\usepackage{fancyvrb}
\usepackage{tcolorbox}
\usepackage{bbm}

\usepackage{listings}

\usepackage{booktabs}
\usepackage{multirow}
\usepackage{graphicx}

\makeatletter
\newcommand{\AppendixHyperFix}{%
	\renewcommand{\theHsection}{appendix.\Alph{section}}%
	\renewcommand{\theHsubsection}{appendix.\Alph{section}.\arabic{subsection}}%
	\renewcommand{\theHsubsubsection}{appendix.\Alph{section}.\arabic{subsection}.\arabic{subsubsection}}%
	\renewcommand{\theHequation}{appendix.\Alph{section}.\arabic{equation}}%
}
\makeatother

\makeatletter
\newenvironment{problemequation}[2]{%
	\begingroup
	\renewcommand{\theequation}{#1}%
	\@ifundefined{theHequation}{}{%
		\renewcommand{\theHequation}{#2}%
	}%
	\begin{equation}%
	}{%
	\end{equation}%
	\addtocounter{equation}{-1}%
	\endgroup
}
\makeatother

\EquationsNumberedThrough    

\TheoremsNumberedThrough     
\ECRepeatTheorems  %

\MANUSCRIPTNO{IJOC-0001-2024.00}

\usepackage{mathtools}

\usepackage[normalem]{ulem} 

\newcommand{\cB}{{\mathcal{B}}}

\newcommand{\cD}{{\mathcal{D}}}

\newcommand{\cJ}{{\mathcal{J}}}

\newcommand{\cN}{{\mathcal{N}}}

\newcommand{\cR}{{\mathcal{R}}}

\newcommand{\cX}{{\mathcal{X}}}
\newcommand{\cY}{{\mathcal{Y}}}

\newcommand{\RR}{\mathbb{R}} 

\newcommand{\dist}{\mathrm{dist}}    
\newcommand{\Diag}{{\mathrm{Diag}}} 

\newcommand{\bc}{\begin{center}}
\newcommand{\ec}{\end{center}}

\newcommand{\bdm}{\begin{displaymath}}
\newcommand{\edm}{\end{displaymath}}

\newcommand{\beq}{\begin{equation}}
\newcommand{\eeq}{\end{equation}}

\newcommand{\bfl}{\begin{flushleft}}
\newcommand{\efl}{\end{flushleft}}

\newcommand{\bt}{\begin{tabbing}}
\newcommand{\et}{\end{tabbing}}

\newcommand{\beqn}{\begin{eqnarray}}
\newcommand{\eeqn}{\end{eqnarray}}

\newcommand{\beqs}{\begin{align*}} 
\newcommand{\eeqs}{\end{align*}}  

\begin{document}
	
	
	\RUNAUTHOR{Luo, Liu, Liu, and Chen}
	
	\RUNTITLE{{Sequential smoothing majorant  stochastic approximation method}  for nonconvex--nonconcave minimax problems}
	
	\TITLE{{A sequential smoothing majorant  stochastic approximation method}  for nonconvex--nonconcave minimax problems}
	
	\ARTICLEAUTHORS{%
			\AUTHOR{Zhouxing Luo}
			\AFF{ Institute of Computational Mathematics and Scientific/Engineering Computing \\
				Academy of Mathematics and Systems Science, Chinese Academy of Sciences \\
				Beijing,  \EMAIL{luozhouxing18@mails.ucas.ac.cn }}
			
			\AUTHOR{Wei Liu}
			\AFF{Department of Applied Mathematics\\
				The Hong Kong Polytechnic University\\
				Hong Kong, \EMAIL{lwdsdqqb@gmail.com}}
			
			\AUTHOR{Xin Liu}
			\AFF{Institute of Computational Mathematics and Scientific/Engineering Computing \\
				Academy of Mathematics and Systems Science, Chinese Academy of Sciences \\
				Beijing, \EMAIL{liuxin@lsec.cc.ac.cn}}
			
			\AUTHOR{Xiaojun Chen}
			\AFF{Department of Applied Mathematics\\
				The Hong Kong Polytechnic University\\
				Hong Kong, \EMAIL{xiaojun.chen@polyu.edu.hk}}
		} 
		
		\ABSTRACT{%
			We propose a  sequential smoothing majorant  stochastic approximation (SMSA) method for  nonconvex--nonconcave minimax optimization problems. To overcome the nonconcavity of the inner maximization problem, we introduce a new majorant stochastic approximation with an $O(\beta_N^2)$ accuracy bound,  where $\beta_N$ is the sample coverage radius. The accuracy bound significantly improves upon the $O(\beta_N)$ approximation bound for the standard stochastic approximation. Moreover, we establish nonasymptotic bounds for both global optimal values and minimizer sets, and prove consistency for the
			Clarke stationary points, as $\beta_N\downarrow  0$ almost surely.  
			We show that the generated sequence by the SMSA method is bounded, and that the returned point is an approximate Clarke stationary point of the   majorant stochastic 	approximation models. 
			Numerical experiments on a synthetic toy example and robust logistic regression on two UCI datasets demonstrate improved approximation fidelity and lower mean robust test losses relative to the standard sampled approximation. 
		}%
		
		\FUNDING{This research was supported by [grant number, funding agency].}
		
		
		
		\KEYWORDS{Optimization, Stochastic approximation, Smoothing method,  Nonconvex--nonconcave minimax problems}
		
		\SUBJECTCLASS{MSC2020: 90C30, 90C06, 49M37}
		
		
		\maketitle
		

		\section{Introduction}\label{sec:intro}

		In this paper, we consider the following nonconvex--nonconcave minimax problem:
			\begin{equation}
				\label{original_minimax_problem}
				\min_{x \in \cX} \; \max_{y \in \cY} \; f(x,y),
			\end{equation}
			where $\cX\subseteq\mathbb{R}^n$ and $\cY\subseteq\mathbb{R}^m$ are nonempty, compact, and convex sets, and the function \mbox{$f:\cX\times \cY \to \mathbb{R}$} is  continuously 
			differentiable, but 
			not necessarily convex in $x$ nor concave in $y$.
			Problems of this form arise in robust learning, adversarial classification, uncertainty-aware estimation, resilient planning, stochastic control, reservoir management, and data-driven logistics, among others; in all of these settings, the order of decision-making is essential as \(x\) is chosen first while \(y\) represents an adverse or uncertain response that emerges subsequently \citep{Pfetsch2023ResilientSystems,Noyan2022DecisionDependentDRO,Shehadeh2023MobileFacilityDRO,goodfellow2014explaining,huang2015learning,madry2018towards,kuhn2024distributionallyrobustoptimization,rahimian2019distributionally}. 
			We briefly introduce a representative example.
			
			\begin{example}[Adversarial training~\citep{madry2018towards}]
				Adversarial training is one of the most effective approaches for defending deep learning models against adversarial examples. Given training data \(\{(\zeta_j,\eta_j)\}_{j=1}^N\), a shared-perturbation formulation of adversarial training is
				\[
				\min_{x\in\cX}\max_{y\in\cY}
				\frac{1}{N}\sum_{j=1}^N \ell(x;\zeta_j+y,\eta_j)+\varphi(x),
				\]
				where \(x\) denotes the model parameters, \(\ell(\cdot;\zeta,\eta):\cX\to\RR\) is a differentiable loss function, and \(\varphi:\cX\to\RR\) is a regularization term. An adversary \(y\in\cY:=\{y\in\RR^m:\|y\|_p\le\delta\}\) is allowed to perturb each input \(\zeta_j\), where \(1\le p\le\infty\) and \(\delta>0\) controls the perturbation magnitude. In general, \(\ell(x;\zeta_j+y,\eta_j)\) is nonconvex with respect to \(x\) and nonconcave with respect to \(y\).
			\end{example}
			

			When the function $f$ is convex in $x$ and concave in $y$, classical saddle-point theory provides robust theoretical guarantees and has been extensively studied following von Neumann’s pioneering work~\citep{Neumann1928ZurTD}; see, for example,~\citep{chen2014optimal,Nemirovski2004ProxMethodWR}. Progress has also been achieved under one-sided structural assumptions, particularly the nonconvex--concave or convex--nonconcave settings where $f(x,\cdot)$ is concave for any fixed $x$, or $f(\cdot,y)$ is convex for any fixed~$y$~\citep{xu2024decentralized,xu2023unified,xu2024derivativefreeminMax}. In contrast, the fully nonconvex--nonconcave scenario remains less explored. In this setting, classical minimax results, such as Sion's theorem~\citep{Sion1958OnGM}, do not hold, and saddle points for problem~\eqref{original_minimax_problem} may fail to exist altogether. A common algorithmic approach in this challenging landscape is to focus instead on notions of game-stationarity; see, e.g.,~\citep{jiang2023optimality,li2022nonsmooth,lin2023gradient,xu2023unified}. Although game-stationarity concepts are useful, they may not fully align with the sequential min--max objective, since they do not directly address minimizing the worst-case adversarial response induced by the  decision variable $x$~\citep{chen2024minmax}.

			A natural way to preserve the sequential \emph{min--max} structure is to work with the value-function formulation
			\begin{problemequation}{\ensuremath{\mathrm{P}}}{problemP}
				\label{eq:value_problem}
				\min_{x\in\cX} V(x),\quad \quad
				\text{where } V(x):=\max_{y\in\cY} f(x,y).
			\end{problemequation}
				This formulation captures the sequential decision semantics, but it introduces a   computational bottleneck: evaluating  $V(x)$, or even reliably approximating its first-order information for optimization purposes, typically requires solving a global maximization subproblem over the set $\cY$. When the function $f(x,\cdot)$ is nonconcave, the inner maximization is inherently challenging. 
				Similar computational bottlenecks appear prominently in robust optimization and distributionally robust optimization contexts, where worst-case objective functions, uncertainty sets, or ambiguity sets give rise to intricate inner optimization problems that must nonetheless be resolved efficiently~\citep{Rahimian2022DROFrameworks, Gao2023WassersteinDRSO}.
				
				The standard stochastic approximation tackles the inner maximization by discretizing the set~$\cY$ via sampling. This   idea underpins scenario approximations for uncertain convex optimization and sample-based approximations for probabilistic constraints~\citep{calafiore2005uncertain,LuedtkeAhmed2008SampleApprox}. It plays a vital role in the algorithm design of   chance-constrained, robust, and distributionally robust optimization across diverse applications such as reservoir management, call-center staffing, and Wasserstein-ambiguity modeling~\citep{Gurvich2010CallCenterChance,Rahimian2022DROFrameworks,Gao2023WassersteinDRSO}. Recent studies have developed tighter approximations via compact sample-average formulations, matrix inequalities, and submodularity-based techniques for joint chance constraints~\citep{Porras2023TightCompactSAA,Karimi2021CCLMI,HoNguyen2023StrongLHS,KilincKarzan2022JointChanceSubmod}. Similar approximation techniques also appear in multistage stochastic programming and effective-scenario methods in distributionally robust optimization~\citep{Bodur2022TwoStageLDR,Rahimian2019EffectiveScenarios,xu2018distributionally}.
				
				For problem~\eqref{eq:value_problem}, the standard stochastic approximation replaces $V(x)$ by
				\begin{problemequation}{\ensuremath{\overline{\mathrm{P}}_N}}{problemPbarN}
					\label{eq:PbarN}
					\min_{x\in\cX}
					\left\{
					\overline V_N(x):=\max_{j\in[N]} f(x,y_j)
					\right\},
				\end{problemequation}
					where $Y_N=\{y_1,y_2,\ldots,y_N\}\subset\cY$ and $[N]:=\{1,2,\ldots,N\}$. Let
					\begin{equation}
						\label{eq:beta}
						\beta_N:=\max_{y\in \cY}\min_{j\in[N]}\|y-y_j\|
					\end{equation}
					denote the coverage radius of $Y_N$ in $\cY$. Under smoothness of $f$ in $y$, \citet{jiang2023pure} proved that
					$\sup_{x\in\cX}|V(x)-\overline V_N(x)|=O(\beta_N)$.
					For compact sets in finite-dimensional spaces, however, $\beta_N$ typically decreases only polynomially in $N$, with the rate determined by the ambient or intrinsic dimension of $\cY$ \citep{shapiro2009lectures,billingsley1995probability}. Consequently, the standard stochastic approximation may require a large number of inner samples to achieve high accuracy, a sample-efficiency issue that also motivates tighter sample-average approximations and effective-scenario reductions in adjacent literatures \citep{Porras2023TightCompactSAA,Rahimian2019EffectiveScenarios}.
					
					{These observations motivate a natural question: 
						\begin{center}
							\textit{can each sampled point be made more informative by exploiting local smoothness of the   objective?}
					\end{center}}
					
					In this paper,  rather than sampling $f$ directly, we exploit smoothness in the inner variable to construct a tractable local majorant at each sampled point. Throughout this paper, we make the following assumption. 
					\begin{assumption}\label{ass:smoothness}
						The function $f$ is twice continuously differentiable in   an open neighborhood of $\cX \times \cY$. Moreover, there exist constants $L, L_0>0$ such that, for all $(x,y),(x',y') \in \cX \times \cY$,
						\begin{equation}\label{eq:Lipschtiz_smooth}
							\|\nabla f(x,y)-\nabla f(x',y')\|
							\le L \|(x,y)-(x',y')\|, 	 
						\end{equation}
						\begin{equation}\label{eq:mainLxyy_ass_compact_short}
							\|\nabla^2_{xy} f(x,y)-\nabla^2_{xy} f(x',y')\|
							\le
							L_0\|(x,y)-(x',y')\|.
						\end{equation} 
					\end{assumption}
					
					Under Assumption~\ref{ass:smoothness}, for any $x\in \cX$ and $y,z\in \cY$, we have
					\begin{equation}\label{eq:qz}
						f(x,z)\ge f(x,y)+\langle\nabla_y f(x,y),\,z-y\rangle-\frac{L}{2}\|z-y\|^2.
					\end{equation}
					Now,  we introduce {a  majorant function} $q:\cX\times \cY \to \mathbb{R}$ of $f$ defined by
					\begin{equation}
						\label{eq:q}
						q(x,y)
						:= f(x,y)+\max_{z\in \cY}\Big(\langle\nabla_y f(x,y),\,z-y\rangle-\frac{\widehat{L}}{2}\|z-y\|^2\Big),
					\end{equation}
					where $\widehat{L}>L$ is a constant.
					The inner problem is strongly concave and has the unique solution $\Pi_{\mathcal{Y}}\left(y+\widehat{L}^{-1} \nabla_y f(x, y)\right)$; hence $q$ is continuous by Assumption~\ref{ass:smoothness}.  It is easy to verify that  $$q(x,y) \le V(x), \text{ and }
					f(x,y)\le q(x,y) \text{ for any } (x,y) \in \cX \times \cY,$$ from \eqref{eq:qz} and \eqref{eq:q}.  Hence we have
					$$f(x,y)\le q(x,y) \le V(x)
					\quad \forall(x,y) \in \cX \times \cY.$$
					Moreover, if $y^*\in\arg\max_{y\in\cY}f(x,y)$, then
					$\langle\nabla_y f(x,y^*),\,z-y^*\rangle\le0, \, \forall z\in \cY$, which yields the equalities $q(x,y^*)=f(x,y^*)=V(x)$, and  
					$
					V(x)=\max_{y\in \cY} q(x,y).
					$
					Therefore, \eqref{eq:value_problem} admits the equivalent reformulation
					\begin{equation}\label{eq:Q}
						\min_{x\in\cX} \max_{y\in \cY} q(x,y).
					\end{equation}
					
					Since   $q$ is continuous, discretizing the inner maximization in~\eqref{eq:Q} over the same sample set \mbox{$Y_N=\{y_1,y_2,\dots,y_N\}\subset \cY$} yields the \emph{majorant stochastic approximation}
					\begin{problemequation}{\ensuremath{\mathrm{P}_N}}{problemPN}
						\label{eq:PN}
						\min_{x\in\cX}
						\left\{
						V_N(x):=\max_{j\in[N]} q(x,y_j)
						\right\}.
					\end{problemequation}
						Existing approximation results for continuous max problems (e.g.,~\citep{xu2018distributionally}) yield a nonasymptotic bound between~\eqref{eq:Q} and~\eqref{eq:PN} in terms of $\beta_N$, which translates directly to problem~\eqref{eq:value_problem} based on the relation between $q$ and $f$. 
						Below we prove a sharper bound.
						It holds that
						\begin{equation}
							\label{eq:Vnthere}
							\overline V_N(x)=\max_{j\in[N]} f(x,y_j)\ \le\ \max_{j\in[N]} q(x,y_j)= V_{N}(x) \ \le\ \max_{y\in \cY} q(x,y)=V(x), \quad \forall x \in \cX.
						\end{equation}
						Hence $V_N$ is a tractable lower approximation of $V$ that is uniformly tighter than $\overline V_N$. Under the same sample set and coverage radius, this refinement improves the approximation error from $O(\beta_N)$ to $O(\beta_N^2)$. Details can be found in Section~\ref{sec:2}.
						This improvement is of \textit{quadratic order}, as the coverage radius $\beta_N$ decreases almost surely to zero with increasing sample size $N$, under an appropriate sampling or covering strategy on the compact set $ \mathcal{Y}$ (e.g., Assumption~\ref{ass:sample} given later).

						\subsection{Contributions}
						We propose a {majorant stochastic approximation} approach for the nonconvex--nonconcave minimax  problem~\eqref{eq:value_problem}.
						Specifically, we construct a majorant function \(q(x,y)\) for the  objective $f(x,y)$, which induces the  {majorant stochastic approximation} \mbox{\( V_N(x)=\max_{j\in[N]} q(x,y_j)\)}, providing a more accurate   alternative to standard {stochastic approximation} \mbox{\(\overline V_N(x)=\max_{j\in[N]} f(x,y_j)\)}. 
						For value functions and optimal values, we prove that the {majorant stochastic approximation} significantly improves the {approximation} accuracy from \(O(\beta_N)\) to \(O(\beta_N^2)\), as the coverage radius of the samples \(\beta_N\) tends to 0 almost surely.   
						Under a Polyak--Łojasiewicz condition, we further establish nonasymptotic error bounds and subsequential consistency results for Clarke stationary points. 

						In  addition, we design a {sequential} smoothing majorant stochastic approximation (SMSA) algorithm with convergence result, enabling efficient solution of the {majorant stochastic approximation} problem \eqref{eq:PN}.  
						The
						method solves a sequence of log-sum-exp smoothed subproblems by projected
						gradient iterations, while increasing the sample size and decreasing the
						smoothing and stationarity parameters.
						Numerical experiments demonstrate the computational efficiency and approximation quality  of the  SMSA algorithm.

						\subsection{Notations and definitions}\label{subsec:1.4-defs}
						Let  $\RR_+ := [0,+\infty)$, $\overline\RR_+ = \RR_+\cup \{+\infty\}$, and $\|\cdot\|$ be the Euclidean norm.
						Denote the closed ball centered at $x$ with radius~$r$ by $\cB(x,r)=\{z\in \mathbb{R}^n : \|x-z\|\le r\}$. For a nonempty closed set $\Omega\subseteq\RR^p$, define
						$
						\dist(x_0,\Omega):=\inf_{x\in\Omega}\|x-x_0\|,\text{ for } x_0\in\RR^p,
						$
						and let $\mathrm{co}(\Omega)$ be the convex hull of $\Omega$.
						$\Pi_{\Omega}$  represents the Euclidean projection onto  $\Omega$.
						If $\Omega$ is compact, define its diameter by
						$
						D_{\Omega} := \sup\{\|y-y'\| : y,y' \in \Omega\}.
						$
						For nonempty sets $A,B\subseteq\RR^p$, their (excess) distance and Hausdorff distance are
						\[
						d(A,B):=\sup_{a\in A}\dist(a,B),\text{ and } d_H(A,B):=\max\{d(A,B),\,d(B,A)\}, \text{ respectively.}
						\]Let $h:\RR^p\to\RR$ be locally Lipschitz. 
						The Clarke subdifferential~\citep{clarke1990optimization} of $h$ at $\bar y$ is
						\begin{equation}\label{eq:clarke_subdifferential}
							\partial h(\bar y):=\big\{z\in\RR^p:\ \langle z,d\rangle\le \limsup_{y\to\bar y,\ t\downarrow0}\frac{h(y+td)-h(y)}{t},\ \ \forall d\in\RR^p\big\}.
						\end{equation}
						For a nonempty closed   convex set $\cY\subseteq\RR^p$, the (Euclidean) normal cone
						to $\cY$ at $\overline y\in\cY$ is defined by
						$
						\mathcal N_{\mathcal Y}(\overline y):=\{\xi\in\mathbb{R}^p:\ \langle \xi,y-\overline y\rangle\leq 0, \,
						\forall y\in\mathcal Y\}.
						$
						
						We   end this subsection with three definitions.
						\begin{definition}[$\epsilon$-Clarke stationary point]\label{def:value-stationary}
							For \(\epsilon\ge 0\), we call \(x^*\in\cX\) an \(\epsilon\)-Clarke stationary point of problem~\eqref{eq:value_problem} if
						$
							\dist\bigl(\partial V(x^*)+\cN_{\cX}(x^*),\,0\bigr)\le \epsilon.
						$
						When $\epsilon=0$, $x^*$ is also called a  {Clarke stationary point}.
						\end{definition}
						
						\begin{definition}[Minimax point]
							\label{def:minimax_pt}
							A point $(x^*,y^*)\in \cX\times \cY$ is called a minimax point of problem (\ref{original_minimax_problem}) if it holds that
							$
							f(x^*,y)  \leq  f(x^*,y^*)  \leq  \max_{y' \in \cY} f(x,y')
							$
							for any $x \in \cX$ and $y \in \cY$.
						\end{definition}
						
						\begin{definition}[Smoothing function~\citep{chen2012smoothing}]\label{def:smoothing}
							Let $h:\RR^{p}\to\RR$ be continuous.
							We call \mbox{$\tilde h:\RR^{p}\times\RR_{+}\to\RR$} a smoothing function of $h$, if for any fixed $\mu>0$,  $\tilde h(\cdot; \mu)$ is continuously differentiable and
							for all $\overline y\in\RR^{p}$, it holds
							$
							\lim_{y\to\overline y, \mu\downarrow0}\tilde h(y; \mu)=h(\overline y).
							$
						\end{definition}

						\subsection{Organization}
						The remainder of the paper is organized as follows. In Section~\ref{sec:2}, we investigate accuracy bounds of the majorant stochastic approximation and the relationships among   \eqref{eq:value_problem}, \eqref{eq:PN} and \eqref{eq:PbarN}.  Section~\ref{sec:algorithm} presents the SMSA method and the convergence analysis for solving problem \eqref{eq:value_problem}. Numerical experiments are reported in Section~\ref{sec:numer}. Conclusion is given in the last section.

						\section{Relationships among problems \texorpdfstring{\eqref{eq:value_problem}, \eqref{eq:PN}, and \eqref{eq:PbarN}}{(P), (P\_N), and (PbarN)}, and  accuracy bounds }\label{sec:2}
						In this section, we compare problem~\eqref{eq:value_problem} with its standard and majorant stochastic approximations, \eqref{eq:PbarN} and \eqref{eq:PN}.
						Our analysis proceeds along three dimensions: optimal values, minimizer sets, and Clarke stationary sets. Moreover, we show that the accuracy bound of the majorant stochastic approximation is $O(\beta_N^2)$ as $\beta_N\downarrow 0$ almost surely.
						
						\subsection{Preliminaries}\label{subsec:preliminaries}
						In this subsection, we collect basic probabilistic facts for the sampled approximation of~$\cY$.
						Let $P$ be a probability measure supported on~$\cY$, and let $y_1,y_2,\dots,y_N$ be i.i.d.\ samples drawn from~$P$.
						Denote the resulting sample set by $Y_N:=\{y_1,y_2,\ldots,y_N\}$.
						We impose the following  {lower-mass} condition, which is standard in analyses of sampled max discretizations.
						\begin{assumption}\label{ass:sample}
							There exist positive constants $C_0$, $\gamma$ and $\delta_0$ such that
							$$
							P\big(\{y\in\cY:\|y-y_0\|\le \delta\}\big) \ge C_0\,\delta^\gamma,
							$$
							for all $y_0 \in \cY$ and $\delta \in (0, \delta_0)$.
						\end{assumption}
						\begin{remark}
							Assumption~\ref{ass:sample} is widely used to control the coverage radius induced by i.i.d.\ samples; see,
							e.g.,~\citep{anderson2014confidence,xu2018distributionally}.
							It is satisfied, for instance, if $P$ admits a density (with respect to Lebesgue measure on $\RR^m$) that is bounded away from zero on~$\cY$, together with a mild volume regularity of~$\cY$ ensuring that
							$\mathrm{vol}(B(y_0,\delta)\cap \cY)\sim \delta^m$ uniformly over $y_0\in\cY$ for small~$\delta$.
							In particular, when $P$ is the uniform distribution on a full-dimensional set~$\cY$ with such regularity, one may take $\gamma=m$ and $C_0$ proportional to $1/\mathrm{vol}(\cY)$.
						\end{remark}

						Recall the coverage radius (defined in~\eqref{eq:beta})
						$
						\beta_N:=\max_{y\in\cY}\min_{j\in[N]}\|y-y_j\|.
						$
						The next lemma gives an exponential tail bound for~$\beta_N$.
						\begin{lemma}\label{lem:beta_N_key}
							Suppose   Assumption~\ref{ass:sample} holds.
							Then, for any $\epsilon \in (0,\delta_0)$, there exist constants $C_1(\epsilon) > 0$ and $C_2(\epsilon) > 0$ such that
							\begin{equation}
								\label{eq:beta_tail}
								\text{Prob}(\beta_N > \epsilon) \leq C_1(\epsilon) e^{-C_2(\epsilon) N},
							\end{equation}
							where $\text{Prob}$ denotes the probability measure induced by the i.i.d. samples $(y_1,y_2,\ldots,y_N)$ on $\cY^N$.
						\end{lemma}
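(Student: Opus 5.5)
The argument is a standard covering plus union-bound estimate. Fix $\epsilon\in(0,\delta_0)$. Since $\cY$ is compact, take a finite $\epsilon/2$-net of $\cY$ with points in $\cY$. Let $\{z_1,\dots,z_K\}\subset\cY$ be such a net, so every $y\in\cY$ satisfies $\|y-z_k\|\le \epsilon/2$ for some $k\in[K]$. Here $K=K(\epsilon)$ depends only on $\epsilon$ and $\cY$; for instance $K\le (1+2D_{\cY}/(\epsilon/2))^m$ by a volume argument, although finiteness is all we need.

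The key implication is the following. Suppose every ball $\cB(z_k,\epsilon/2)$ contains at least one sample $y_j$. Then for any $y\in\cY$, choose $k$ with $\|y-z_k\|\le\epsilon/2$ and a sample $y_j\in\cB(z_k,\epsilon/2)$. The triangle inequality gives $\min_j\|y-y_j\|\le\epsilon$, so $\beta_N\le\epsilon$. Contrapositively,
\[
\{\beta_N>\epsilon\}\subseteq\bigcup_{k=1}^K\{y_j\notin \cB(z_k,\epsilon/2)\ \forall j\in[N]\}.
\]

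Next we apply Assumption~\ref{ass:sample} with $\delta=\epsilon/2\in(0,\delta_0)$ and $y_0=z_k\in\cY$. This gives $P(\cB(z_k,\epsilon/2)\cap\cY)\ge C_0(\epsilon/2)^\gamma=:p_\epsilon$. Note that $p_\epsilon\le 1$ because $P$ is a probability measure. By independence of the samples,
\[
\text{Prob}\big(y_j\notin\cB(z_k,\epsilon/2)\ \forall j\big)\le (1-p_\epsilon)^N\le e^{-p_\epsilon N}.
\]
The union bound then yields $\text{Prob}(\beta_N>\epsilon)\le K(\epsilon)e^{-p_\epsilon N}$. We therefore set $C_1(\epsilon)=K(\epsilon)$ and $C_2(\epsilon)=C_0(\epsilon/2)^\gamma$.

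There is one technical point to handle. The maximum defining $\beta_N$ is attained, since $y\mapsto\min_j\|y-y_j\|$ is continuous on the compact set $\cY$, and $\beta_N$ is measurable. So the event $\{\beta_N>\epsilon\}$ is well defined. The only remaining care is to choose the net inside $\cY$, so that Assumption~\ref{ass:sample} applies at each center, and to use radius $\epsilon/2$, so that the triangle inequality closes. This is the main, though mild, obstacle; the rest is routine.
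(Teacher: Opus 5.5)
Your proof is correct and follows the same route the paper indicates. The paper gives no detailed proof and only refers to the standard $\epsilon$-net plus union-bound template in the literature, which your argument carries out in full. The specific choices are all sound: an $\epsilon/2$-net with centers in $\cY$ so that the lower-mass assumption applies at each center, the per-ball bound $(1-C_0(\epsilon/2)^\gamma)^N\le e^{-C_0(\epsilon/2)^\gamma N}$, and the constants $C_1(\epsilon)=K(\epsilon)$, $C_2(\epsilon)=C_0(\epsilon/2)^\gamma$.
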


					Lemma~\ref{lem:beta_N_key} follows from a standard $\epsilon$-net construction combined with a union bound; see,
					e.g.,~\citep[Section~3]{xu2018distributionally} for a closely related proof template, and~\citep[Section~3]{jiang2023pure} for analogous approximation arguments in sequential min--max settings.
					Moreover, \eqref{eq:beta_tail} implies exponential concentration of $\beta_N$ for any fixed $\epsilon>0$, and hence $\beta_N\to 0$ almost surely by the Borel--Cantelli lemma.
					In the subsequent sections, we express nonasymptotic approximation errors in terms of~$\beta_N$ and then use~\eqref{eq:beta_tail} to convert them into almost-sure convergence statements.

					{\subsection{Relationships among value functions and minimizer sets}\label{subsec:value-relations}
						In this subsection, we compare the standard stochastic approximation $\overline{V}_N$ with the majorant stochastic approximation $ {V}_N$. We first record the pointwise ordering among the three value functions and then derive nonasymptotic bounds for optimal values and minimizer sets.
						
						For later use, we also fix a Lipschitz constant for the function values of \(f\) on \(\cX\times\cY\). Since \(f\) is continuously differentiable, the quantity
						$
						L_f:=\sup_{(x,y)\in\cX\times\cY}\|\nabla f(x,y)\|
						$
						is finite. 
						By replacing the constant \(L\) in Assumption~\ref{ass:smoothness} with \(\max\{L,L_f\}\), and then choosing \(\widehat L>L\) accordingly, we use the same symbol \(L\) for both the gradient-Lipschitz and function-value Lipschitz bounds throughout the sequel. 
						
						Fix a sample set $Y_N=\{y_1,y_2,\ldots,y_N\}\subset \cY$.
						Since $\cY$ is compact and $f$ is continuous, $V$ is continuous by Berge's maximum theorem~\citep{Berge1963}, while  $\overline V_N$ and $V_N$ are finite maxima of continuous functions.
						Define the minimizer sets and optimal values of \eqref{eq:value_problem}, \eqref{eq:PbarN}, and \eqref{eq:PN} by
						\[
						\cX^*:=\arg\min_{x\in\cX} V(x),\qquad
						\overline{\cX}_N^*:=\arg\min_{x\in\cX}\overline V_N(x),\qquad
						\cX_N^*:=\arg\min_{x\in\cX} V_N(x),
						\]
						and
						\[
						\nu^*:=\min_{x\in\cX} V(x),\qquad
						\overline \nu_N^*:=\min_{x\in\cX}\overline V_N(x),\qquad
						\nu_N^*:=\min_{x\in\cX} V_N(x).
						\]
						Since $\cX$ is compact and each value function is continuous, $\cX^*$, $\overline{\cX}_N^*$, and $\cX_N^*$ are all nonempty.
						We know from \eqref{eq:Vnthere} that $V_N$ is sandwiched between $\overline V_N$
						and the value function~$V$.
						Minimizing them over $x\in\cX$ gives
						$
						\overline \nu_N^*\ \le\ \nu_N^*\ \le\ \nu^*.
						$
						We first recall the asymptotic consistency implied by existing   results. Under Assumption~\ref{ass:sample}, Lemma~\ref{lem:beta_N_key} implies that $\beta_N\to 0$ almost surely. Combining this fact with the consistency result of~\citet[Proposition~2]{jiang2023pure}, we obtain
						\begin{equation}
							\label{eq:asymp-values}
							\lim_{N \to \infty} |\nu^{*} - {\nu}_N^{*}| =
							\lim_{N \to \infty}|\nu^{*} - {\overline{\nu}}_N^{*}| = 0, \text{ almost surely},
						\end{equation}
						and the associated minimizer sets obey
						\begin{equation}
							\label{eq:asymp-argmins}
							\lim_{N \to \infty} d(\cX_N^{*},\cX^{*}) =
							\lim_{N \to \infty} d(\overline{\cX}_N^{*},\cX^{*}) = 0,  \text{ almost surely}.
						\end{equation}
						
						We next quantify the  approximation quality, i.e., the rates at which the optimal-value and minimizer-set errors vanish  as a function of the coverage radius~$\beta_N$ given in \eqref{eq:beta}.  
						Define
						\mbox{$
							A(\tau):=\big\{x\in\cX:\ \dist(x,\cX^*)\ge \tau\big\}
							$}
						for any $\tau\ge 0$.
						We   define a growth modulus $\cR:\RR_+\to\overline\RR_+$ by
						\begin{equation}\label{eq:R-def}
							\cR(\tau):=
							\begin{cases}
								\displaystyle \inf\big\{\,V(x)-\nu^*:\ x\in A(\tau)\,\big\}, & \text{if $A(\tau)\neq\emptyset$},\\[0.4em]
								+\infty, & \text{otherwise}.
							\end{cases}
						\end{equation}
						Whenever $A(\tau)$ is nonempty, compactness of $\cX$ and continuity of $V$ ensure that the infimum in
						\eqref{eq:R-def} is attained. It holds that $\cR(\tau)>0$ for every $\tau>0$ because
						$V(x)-\nu^*\ge 0$ on $\cX$ and equality holds only on $\cX^*$.
						Moreover, $\cR(\tau)$ is nondecreasing in~$\tau$ since $A(\tau)$ shrinks as $\tau$ increases.
						We then define the generalized inverse $\cR^{-1}(t)$ by
						\begin{equation}\label{eq:R-inv-def}
							\cR^{-1}(t):=\sup\big\{\tau\ge 0:\ \cR(\tau)\le t\big\}.
						\end{equation}
						By construction, $\cR^{-1}$ is well-defined, nondecreasing, right-continuous, and satisfies
						$\cR^{-1}(t)\to 0$ as $t\downarrow 0$.
						These moduli have been utilized in stability and error-bound analysis; see, e.g.,~\citep[Chap.~7]{roc1998var}.
						
						Existing results~\citep[Proposition~2]{jiang2023pure} for the {standard stochastic approximation} yield first-order bounds:
						$
						|\nu^*-\overline\nu_N^*|\ \le\ L\beta_N,
						\text{ and }
						d(\overline{\cX}_N^*,\cX^*) \ \le\ \cR^{-1}\!\big(L\beta_N\big).
						$
						Because it holds that $\overline V_N(x) \le V_{N}(x) \le  V(x),$ the same coarse order also applies to $V_N$. These bounds, however,  do not exploit the curvature encoded in $q$. The next theorem shows that the majorant approximation is substantially sharper: its error is governed by a second-order term and improves from $O(\beta_N)$ to $O(\beta_N^2)$. The proof is given in Appendix~\ref{appen:1}.
						

						\begin{theorem}\label{thm:value solution-gap}
							Suppose that Assumption~\ref{ass:smoothness} holds. 
							For any   sample set \mbox{$Y_N=\{y_1,y_2,\ldots,y_N\}\subset \cY$}, the majorant approximation satisfies the uniform pointwise bound
							\begin{equation}
								0\leq V(x) - V_N(x) \leq \widehat{L}\beta_N^2,\quad \forall x\in\cX.
							\end{equation}
							Consequently, the optimal values and the associated minimizer sets satisfy
							\begin{equation}\label{eq:value-gap-orders-new}
								0 \leq \nu^*-\nu_N^* \leq \widehat{L}\beta_N^2, \text{ and }d({\cX}_N^*,\cX^*) \leq \cR^{-1}\!\big(\widehat{L}\beta_N^2\big).
							\end{equation}
						\end{theorem}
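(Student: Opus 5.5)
The lower bound $V(x)-V_N(x)\ge 0$ is already contained in \eqref{eq:Vnthere}, so the work is the upper bound. Fix $x\in\cX$ and choose a global maximizer $y^*\in\arg\max_{y\in\cY}f(x,y)$; this exists by compactness of $\cY$ and continuity of $f$. By the definition \eqref{eq:beta} of $\beta_N$ there is an index $j\in[N]$ with $\|y^*-y_j\|\le\beta_N$. Since $V_N(x)\ge q(x,y_j)$, it suffices to show that $q(x,y_j)\ge f(x,y^*)-\widehat L\|y^*-y_j\|^2$.

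To lower bound $q(x,y_j)$, I would take the feasible choice $z=y^*$ in the inner maximization \eqref{eq:q}. This gives
\[
q(x,y_j)\ \ge\ f(x,y_j)+\langle\nabla_y f(x,y_j),\,y^*-y_j\rangle-\tfrac{\widehat L}{2}\|y^*-y_j\|^2 .
\]
Next I would apply the descent-lemma lower bound \eqref{eq:qz} with the roles $y\to y_j$ and $z\to y^*$. It follows from the $L$-Lipschitz gradient in \eqref{eq:Lipschtiz_smooth}, applied along the segment in $y$ with $x$ fixed; the segment lies in the convex set $\cY$. That bound reads
\[
f(x,y^*)\le f(x,y_j)+\langle\nabla_y f(x,y_j),y^*-y_j\rangle+\tfrac L2\|y^*-y_j\|^2 .
\]
Substituting yields
\[
q(x,y_j)\ge f(x,y^*)-\tfrac{L+\widehat L}{2}\|y^*-y_j\|^2\ge V(x)-\widehat L\beta_N^2,
\]
using $L<\widehat L$. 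The point of using the first-order model rather than $f(x,y_j)$ itself is that the linear term cancels exactly, so the error is quadratic. This is where the improvement over the $O(\beta_N)$ bound comes from, and it needs no first-order optimality condition at $y^*$. The only real care needed is this cancellation and the direction of the inequality in \eqref{eq:qz}: the upper quadratic bound is required here, and it holds by the same Lipschitz argument.

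For the optimal values, I would minimize over $\cX$. Taking $x_N\in\cX_N^*$ gives $\nu^*\le V(x_N)\le V_N(x_N)+\widehat L\beta_N^2=\nu_N^*+\widehat L\beta_N^2$, while $\nu_N^*\le\nu^*$ follows from $V_N\le V$.

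For the minimizer sets, take any $x_N\in\cX_N^*$ and set $\tau=\dist(x_N,\cX^*)$. Then $x_N\in A(\tau)$, so
\[
\cR(\tau)\le V(x_N)-\nu^*\le V_N(x_N)+\widehat L\beta_N^2-\nu^*=\nu_N^*-\nu^*+\widehat L\beta_N^2\le\widehat L\beta_N^2 .
\]
By the definition \eqref{eq:R-inv-def} of the generalized inverse, $\tau\le\cR^{-1}(\widehat L\beta_N^2)$. Taking the supremum over $x_N\in\cX_N^*$ gives the bound on $d(\cX_N^*,\cX^*)$.
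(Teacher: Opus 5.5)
Your proof is correct and follows the paper's argument essentially step for step. You take $z=y^*$ in the inner maximization defining $q(x,y_j)$, where $y_j$ is a sample within $\beta_N$ of a maximizer $y^*$, and cancel the linear term against the $L$-smoothness upper bound to get $V(x)-V_N(x)\le\frac{L+\widehat L}{2}\|y^*-y_j\|^2\le\widehat L\beta_N^2$; you then pass to $\nu^*-\nu_N^*$ and to $d(\cX_N^*,\cX^*)$ via $\cR(\dist(x_N,\cX^*))\le V(x_N)-\nu^*\le\widehat L\beta_N^2$ (the only blemish is cosmetic: the inequality you invoke is the upper quadratic bound rather than \eqref{eq:qz} itself, which, as you note, follows from the same Lipschitz-gradient argument).
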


						%
						
						\begin{remark}
							By  \citet[Theorem 2.2]{chen2024minmax}  and the relation
							$
							V(x)=\max_{y\in \cY} f(x,y)=\max_{y\in \cY} q(x,y),
							$
							every minimizer of the value-function problem induces a minimax point of the original problem.
							More precisely, if $x^*$ is a minimizer of ~\eqref{eq:value_problem}, and \(y^*\in \argmax_{y\in \cY} f(x^*,y)\), then
							\((x^*,y^*)\) is a minimax point of problems~\eqref{original_minimax_problem} and~\eqref{eq:Q}. Thus, Theorem~\ref{thm:value solution-gap} shows that the majorant
							stochastic approximation consistently recovers the \(x\)-component of the minimax set through
							the   {almost-sure} convergence of \(\cX_N^*\) to \(\cX^*\).
							
							For each positive integer \(N\), choose \(x_N^*\in{\cX}_{N}^*\) and
							\(\overline x_N^*\in \overline{\cX}_{N}^*\).  
							Let $y_N^*\in \arg\max_{y\in Y_N}q(x_N^*,y)$ and  $\overline y_N^*\in \arg\max_{y\in Y_N}f(\overline x_N^*,y)$.
							From the bounds
							$$|V(x_N^*)-\max_{y\in Y_N}q(x_N^*,y)|\le \widehat{L}\beta_N^2, \,\,  |V(\overline x_{N}^*)-\max_{y\in Y_N}f(\overline x_{N}^*,y)|\le L\beta_N,
							$$
							and Lemma~\ref{lem:beta_N_key}, there exist indices $N_j\uparrow\infty$, $\overline x_{N_j}^*\in \overline{\cX}_{N_j}^*$, $x_{N_j}^*\in{\cX}_{N_j}^*$, and limit points $(\overline x^*,\overline y^*)$ and $(x^*,y^*)$ such that
							$$
							(\overline x_{N_j}^*,\overline y_{N_j}^*)\to (\overline x^*,\overline y^*),
							(x_{N_j}^*,y_{N_j}^*)\to (x^*,y^*), \text{ as } j\to\infty, \text{ almost surely}.
							$$
							Moreover, with probability one, both limits $(\overline x^*,\overline y^*)$ and $(x^*,y^*)$ are minimax points of~\eqref{original_minimax_problem}.
						\end{remark}

						{\subsection{Relationships among stationary points}\label{sub:stationary_points}
							In this subsection, we investigate the relationships among  Clarke stationary sets associated with $V$,  $\overline V_N$ and $V_N$.
							Since all three value functions are locally Lipschitz on $\cX$, their Clarke subdifferentials  are well defined for every $x\in\cX$.
							For a locally Lipschitz function $\phi:\RR^n\to\RR$, define  the Clarke stationary set over $\cX$ by
							\[
							\cD(\phi)\ :=\ \big\{x\in\cX:\ 0\in \partial \phi(x)+\cN_{\cX}(x)\big\}.
							\]
							We write $\cD:=\cD(V)$, $\overline{\cD}_N:=\cD(\overline V_N)$, and $\cD_N:=\cD(V_N)$.

							Under Assumption~\ref{ass:sample}, we have \(\beta_N\to0\) almost surely, and the standard consistency theory implies a \emph{subdifferential consistency} property: whenever \(x_N\to x\), \(g_N\in\partial \overline V_N(x_N)\) or \(g_N\in\partial V_N(x_N)\), and the sequence \(\{g_N\}\) is bounded, every accumulation point of \(\{g_N\}\) belongs to \(\partial V(x)\); see, e.g., \citet[Section~3]{xu2018distributionally}. 
							Combining this subdifferential consistency with the closed-graph property of the normal cone mapping
							yields the following proposition, whose proof is given in Appendix~\ref{appen:3}.
							
							\begin{proposition}[Asymptotic relationships among Clarke stationary points]\label{thm:stationary_sets_asymp2}
								Suppose Assumptions~\ref{ass:smoothness}--\ref{ass:sample} hold. Let $\{x_N\}\subseteq \cX$ satisfy $x_N\to x$ as $N\to\infty$.
								If either $x_N\in \overline{\cD}_N$ for all $N$  or $x_N\in \cD_N$ for all $N$, then with probability one,
								the limit point $x$ belongs to $\cD$, i.e.,
								$
								0\in \partial V(x)+\cN_{\cX}(x).
								$
							\end{proposition}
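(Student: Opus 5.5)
The plan is to take the subdifferential consistency property quoted just before the proposition as the key tool, and to combine it with three standard facts: $\cN_{\cX}$ has closed graph, the gradients are uniformly bounded, and $V$ is upper semicontinuous in the Clarke sense. We work on the probability-one event on which $\beta_N\to0$; this event exists by Lemma~\ref{lem:beta_N_key} and Borel--Cantelli. We then fix a realization and argue deterministically.

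The first step is a uniform bound on the Clarke subgradients. Both $\overline V_N$ and $V_N$ are finite maxima of functions that are $C^1$ in $x$. This is immediate for $f(\cdot,y_j)$. For $q(\cdot,y_j)$ it holds because $q(x,y)=f(x,y)+\max_{z\in\cY}\{\cdots\}$, and the inner problem is strongly concave with a unique maximizer $z^*(x,y)=\Pi_\cY(y+\widehat L^{-1}\nabla_y f(x,y))$. Danskin's theorem then gives
\[
\nabla_x q(x,y)=\nabla_x f(x,y)+\nabla^2_{yx}f(x,y)\,(z^*(x,y)-y).
\]
This is continuous and bounded in norm by $L+L D_\cY$ on $\cX\times\cY$. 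Consequently $\partial V_N(x)\subseteq\mathrm{co}\{\nabla_x q(x,y_j): j \text{ active}\}$ and $\partial\overline V_N(x)\subseteq \mathrm{co}\{\nabla_x f(x,y_j)\}$. In both cases every $g\in\partial V_N(x)$ or $g\in\partial \overline V_N(x)$ satisfies $\|g\|\le M:=L(1+D_\cY)$, uniformly in $N$ and $x$.

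Next, we take $x_N\in\cD_N$ (the case $x_N\in\overline\cD_N$ is identical). Stationarity gives $g_N\in\partial V_N(x_N)$ and $\xi_N\in\cN_\cX(x_N)$ with $g_N+\xi_N=0$. Since $\|g_N\|\le M$, the normals also satisfy $\|\xi_N\|\le M$. Passing to a subsequence, $g_N\to g$ and $\xi_N\to\xi=-g$. The subdifferential consistency property, applicable because $x_N\to x$ and $\{g_N\}$ is bounded, yields $g\in\partial V(x)$. The closed graph of $\cN_\cX$ for convex $\cX$ follows by passing to the limit in $\langle\xi_N,y-x_N\rangle\le0$, and it yields $\xi\in\cN_\cX(x)$. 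Hence $0=g+\xi\in\partial V(x)+\cN_\cX(x)$, that is, $x\in\cD$.

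The main obstacle is to justify the subdifferential consistency rather than merely cite it, in particular for $V_N$, whose pieces are $q$ and not $f$. If a self-contained argument is needed, I would proceed as follows. By Carathéodory, write $g_N=\sum_{i=1}^{n+1}\lambda_{N,i}\nabla_x q(x_N,y_{j_{N,i}})$ with active indices $j_{N,i}$. Extract a subsequence along which $\lambda_{N,i}\to\lambda_i$ and $y_{j_{N,i}}\to \bar y_i$. Activity together with Theorem~\ref{thm:value solution-gap} gives
\[
q(x_N,y_{j_{N,i}})=V_N(x_N)\ge V(x_N)-\widehat L\beta_N^2 .
\]
By continuity, $q(x,\bar y_i)\ge V(x)$, so $q(x,\bar y_i)=V(x)$, and therefore $\bar y_i$ maximizes $f(x,\cdot)$. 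Indeed, $q\le V$, and the strict inequality $\widehat L>L$ forces $z^*(x,\bar y_i)=\bar y_i$ whenever $q=V$. Then $\nabla_x q(x,\bar y_i)=\nabla_x f(x,\bar y_i)$, which lies in $\partial V(x)=\mathrm{co}\{\nabla_x f(x,y): y\in\arg\max f(x,\cdot)\}$ by Danskin. Continuity of $\nabla_x q$ finally gives $g=\sum_i\lambda_i\nabla_x f(x,\bar y_i)\in\partial V(x)$. The step "$q(x,\bar y)=V(x)$ implies $\bar y$ is a global maximizer" is the one to check carefully: we have $V(x)=q(x,\bar y)\le f(x,\bar y)+\max_z\{\langle\nabla_y f,z-\bar y\rangle-\tfrac{\widehat L}{2}\|z-\bar y\|^2\}$. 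If $z^*\ne\bar y$, comparing with \eqref{eq:qz} at $z^*$ gives $f(x,z^*)>q(x,\bar y)=V(x)$, which is a contradiction.
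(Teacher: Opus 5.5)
Your proposal is correct and follows the paper's proof: take $g_N+\xi_N=0$, extract a subsequence with $g_N\to g$ and $\xi_N\to\xi=-g$, use subdifferential consistency to get $g\in\partial V(x)$, and use the closed graph of $\cN_{\cX}$ to get $\xi\in\cN_{\cX}(x)$. Your self-contained check of subdifferential consistency (Carath\'eodory, the bound $V-V_N\le\widehat L\beta_N^2$ from Theorem~\ref{thm:value solution-gap}, continuity of $\nabla_x q$, and Danskin) is also sound and essentially mirrors the paper's proof of Remark~\ref{rem:limit_eps_stationarity}; the only twist is that you use $\widehat L>L$ to identify the limit points as maximizers of $f(x,\cdot)$, whereas the paper applies Danskin directly to $V=\max_{y}q(\cdot,y)$.
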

							
							We next derive a nonasymptotic bound for the majorant stationary set. The standard approximation admits a parallel but weaker bound, discussed in Remark~\ref{rem:4}.
							The key ingredient is a pointwise Polyak--Łojasiewicz (PL) condition for the inner maximization problem. This condition yields a quadratic-growth estimate for \(y\mapsto f(x,y)\) around the exact maximizer set
							\mbox{$
								S_0(x)=\arg\max_{y\in \mathcal{Y}} f(x,y),
								$}
							and, together with Theorem~\ref{thm:value solution-gap}, leads to an explicit approximation bound.
							
							\begin{assumption}[PL condition]
								\label{ass:PL_f_global}
								For any $x\in\cX$, there exists a constant \mbox{$\kappa_p(x)>0$} such that for all $y\in\cY$,
								\begin{equation}\label{eq:PL_f_global}
									\kappa_p(x)\big(V(x)-f(x,y)\big)
									\ \le\
									\dist^2\!\Big(0,\,-\nabla_y f(x,y)+\cN_{\cY}(y)\Big).
								\end{equation}
							\end{assumption}

							Before proceeding to our main results, we establish the quadratic growth property of $q(x,\cdot)$ for all $x\in\cX$. The proof is given in Appendix~\ref{appen:4}.
							
							\begin{lemma}
								\label{lem:QG_f_to_QG_q_global}
								Suppose that Assumption~\ref{ass:PL_f_global} holds and $\widehat{L}>L$. For any $x\in\cX$, define
								\begin{equation}\label{eq:mu_q_global}
									\kappa_q(x):=\frac{\kappa_p(x)(\widehat{L}-L)}{2\kappa_p(x)+8(\widehat{L}-L)}.
								\end{equation}
								Then, for all $x\in\cX$ and $y\in\cY$, it holds that
								\begin{equation}\label{eq:QG_q_global}
									V(x)-q(x,y)\ \ge\ \kappa_q(x)\,\dist^2\big(y,S_0(x)\big),
								\end{equation}
								where $S_0(x) := \arg\max_{y\in \cY} f(x,y)$.
							\end{lemma}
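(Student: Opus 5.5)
Fix $x\in\cX$ and $y\in\cY$, and write $g:=\nabla_y f(x,y)$. The plan is to move from $y$ to the maximizer of the inner problem in \eqref{eq:q}, namely $z^*:=\Pi_{\cY}(y+\widehat L^{-1}g)$, and to split $V(x)-q(x,y)$ into two nonnegative pieces. The first piece is the gap $V(x)-f(x,z^*)$, which is controlled by the PL condition. The second piece is a multiple of $\|z^*-y\|^2$, coming from the curvature slack $\widehat L-L$. Combining them through the triangle inequality $\dist(y,S_0(x))\le \dist(z^*,S_0(x))+\|z^*-y\|$ should give \eqref{eq:QG_q_global}.

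\textbf{Step 1 (descent-type estimate).} By definition, $q(x,y)=f(x,y)+\langle g,z^*-y\rangle-\frac{\widehat L}{2}\|z^*-y\|^2$. The smoothness inequality \eqref{eq:qz}, applied with $z=z^*$, gives $f(x,y)+\langle g,z^*-y\rangle\le f(x,z^*)+\frac L2\|z^*-y\|^2$. Hence, with $d:=\|z^*-y\|$,
\[
V(x)-q(x,y)\ \ge\ \big(V(x)-f(x,z^*)\big)+\tfrac{\widehat L-L}{2}\,d^2 .
\]
Both terms on the right are nonnegative, since $f(x,z^*)\le V(x)$.

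\textbf{Step 2 (PL $\Rightarrow$ quadratic growth for $f$).} Next I would derive a quadratic-growth bound $V(x)-f(x,w)\ge c\,\kappa_p(x)\,\dist^2(w,S_0(x))$ for all $w\in\cY$, with an explicit $c$ (the standard value is $c=1/4$). The argument is the usual Karimi--Nutini--Schmidt / Bolte-type one. Run the projected gradient flow, or equivalently projected-gradient steps with step size $1/L$, on $f(x,\cdot)$ over $\cY$, starting from $w$. The PL inequality \eqref{eq:PL_f_global} makes $\sqrt{V(x)-f(x,\cdot)}$ decrease at a rate proportional to the length of the path, so the total path length is at most $\frac{2}{\sqrt{\kappa_p(x)}}\sqrt{V(x)-f(x,w)}$. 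The limit point of the path lies in $S_0(x)$, which bounds $\dist(w,S_0(x))$ by this length and yields the claim.

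I expect this step to be the main technical obstacle. The PL condition is stated with the normal cone $\cN_{\cY}$, so the flow has to be the projected (differential-inclusion) one. Alternatively, one can use a discrete projected-gradient argument and the property that the projected-gradient residual is dominated by $\dist(0,-\nabla_y f+\cN_{\cY})$. The constant must be tracked carefully in either version.

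\textbf{Step 3 (combine).} Apply Step 2 with $w=z^*$ and write $X:=V(x)-f(x,z^*)$ and $Y:=\frac{\widehat L-L}{2}d^2$. Then
\[
\dist^2(y,S_0(x))\le (1+\epsilon)\,\dist^2(z^*,S_0(x))+(1+\epsilon^{-1})\,d^2\le \tfrac{(1+\epsilon)}{c\,\kappa_p}X+\tfrac{2(1+\epsilon^{-1})}{\widehat L-L}Y .
\]
Bounding the right-hand side by $\max$ of the two coefficients times $X+Y$, or more simply by their sum times $X+Y$, and using Step 1, gives $V(x)-q(x,y)\ge \kappa\,\dist^2(y,S_0(x))$. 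Here $\kappa$ is the reciprocal of that sum, which has the form $\kappa_p(\widehat L-L)/(a\kappa_p+b(\widehat L-L))$. The parameter $\epsilon$ and the constant $c$ would be chosen to reproduce exactly $\kappa_q(x)$ in \eqref{eq:mu_q_global}, i.e.\ so that the coefficient of $X$ is $8/\kappa_p$ and that of $Y$ is $2/(\widehat L-L)$.

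If the simple choice $\epsilon=1$, $c=1/4$ yields a slightly worse denominator, I would refine in one of two ways. One option is to sharpen Step 1 using the strong concavity of the inner problem in \eqref{eq:q}. The other is to run the Step 2 argument directly from $y$ instead of from $z^*$.
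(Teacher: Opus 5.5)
Your plan is correct and is essentially the paper's proof: the paper also uses the curvature slack $\widehat L-L$ to get $V(x)-q(x,y)\ge\min_{z\in\cY}\{V(x)-f(x,z)+\frac{\widehat L-L}{2}\|z-y\|^2\}$ (your Step~1 at the specific $z^*$ is the same estimate). It then obtains quadratic growth of $f(x,\cdot)$ by citing the PL$\Rightarrow$QG result of Liao et al.\ for the weakly convex function $-f(x,\cdot)+\iota_{\cY}$ with modulus $\kappa_p(x)/8$, rather than deriving it, and combines via the triangle inequality and $t_1a^2+t_2b^2\ge\frac{t_1t_2}{t_1+t_2}(a+b)^2$. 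The one fix is in Step~3: the coefficient of $Y$ is $\frac{2(1+\epsilon^{-1})}{\widehat L-L}>\frac{2}{\widehat L-L}$, so the coefficients $8/\kappa_p(x)$ and $2/(\widehat L-L)$ you aim for cannot both be realized; instead, choosing $\epsilon=2c\kappa_p(x)/(\widehat L-L)$ makes both coefficients equal to $\frac{1}{c\kappa_p(x)}+\frac{2}{\widehat L-L}$, and bounding by this common value is exactly the paper's weighted inequality, which gives $\kappa_q(x)$ when $c=1/8$ and the larger $\frac{\kappa_p(x)(\widehat L-L)}{2\kappa_p(x)+4(\widehat L-L)}$ with your $c=1/4$ (any $c\ge 1/8$ suffices), so no refinement of Step~1 is needed.
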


							\begin{remark}[Necessity of the strict inequality $\widehat{L}>L$]
								The role of \mbox{$\widehat{L}>L$} is to create a strictly positive curvature gap in the construction of $q$. If one simply sets $\widehat{L}=L$, then this additional curvature disappears, and one can no longer guarantee a strictly positive quadratic-growth modulus for $q(x,\cdot)$ in general. We   illustrate this point with the following example. Let
								\[
								\cY=[-1,1], \qquad f(x,y)=-y^2+\frac14 y^4,
								\]
								where $f$ is independent of $x$ and $f(x,\cdot)$ is nonconcave on $\cY$. Since
								$
								\max_{y\in\cY}\left|\frac{\partial^2 f}{\partial y^2}(x,y)\right|=2,
								$
								we may take $L=2$. For every $x\in\cX$,  
								$
								V(x)=\max_{y\in\cY} f(x,y)=0,
								\text{ and }
								S_0(x)=\arg\max_{y\in\cY} f(x,y)=\{0\}.
								$
								We next verify that the PL condition in Assumption~\ref{ass:PL_f_global} holds.
								Because
								$
								\nabla_y f(x,y)=-2y+y^3,
								$
								we have, for $|y|<1$,
								$
								\dist^2\!\Bigl(0,-\nabla_y f(x,y)+\cN_{\cY}(y)\Bigr)
								=(2y-y^3)^2,
								$
								while at the boundary points,
								\[
								\dist^2\!\Bigl(0,-\nabla_y f(x,1)+\cN_{\cY}(1)\Bigr)=1,
								\qquad
								\dist^2\!\Bigl(0,-\nabla_y f(x,-1)+\cN_{\cY}(-1)\Bigr)=1.
								\]
								On the other hand,
								$
								V(x)-f(x,y)=y^2-\frac14 y^4.
								$
								For $|y|=1$, $
								\frac43(V(x)-f(x,y))=1.
								$
								For $|y|<1$,
								\[
								(2y-y^3)^2-\frac43\Bigl(y^2-\frac14 y^4\Bigr)
								=
								\frac13 y^2(1-y^2)(8-3y^2)\ge 0.
								\]
								Hence Assumption~\ref{ass:PL_f_global} holds with
								$
								\kappa_p(x)\equiv \frac43.
								$
								However, if we set $\widehat{L}=L=2$, then for any $y\in\cY$,
								the maximizer in the definition of $q$ is attained at
								$z=y+\frac1L\nabla_y f(x,y)=\frac{y^3}{2}$, and therefore
								\[
								q(x,y)
								=
								f(x,y)+\frac{1}{2L}\|\nabla_y f(x,y)\|^2
								=
								-y^2+\frac14 y^4+\frac{(-2y+y^3)^2}{4}
								=
								-\frac34 y^4+\frac14 y^6.
								\]
								Consequently, it holds that 
								$
								V(x)-q(x,y)=\frac14 y^4(3-y^2),
								\text{ and }
								\dist^2\bigl(y,S_0(x)\bigr)=y^2.
								$
								It follows that
								\[
								\frac{V(x)-q(x,y)}{\dist^2(y,S_0(x))}
								=
								\frac14 y^2(3-y^2)\to 0, \text{ as } y\to 0.
								\]
								Thus, there is  no constant $\kappa_q(x)>0$ such that the inequality
								$
								V(x)-q(x,y)\ge \kappa_q(x)\,\dist^2\bigl(y,S_0(x)\bigr)
								$
								holds for all $y\in\cY$.
								Furthermore, even in any small neighborhood of the solution set $\{0\}$, the above fact does not hold.
								This shows that the strict inequality $\widehat{L}>L$ is generally necessary if one wishes to obtain a nontrivial positive quadratic-growth bound for $q(x,\cdot)$.
							\end{remark}

							With the preceding lemma established, we can now bound the distance between the subdifferential sets  $\partial V(x)$ and   $\partial V_N(x)$.  The proof is provided in Appendix~\ref{appen:4}.
							\begin{theorem}
								\label{thm:main_betaN_global}
								Suppose that Assumptions~\ref{ass:smoothness}--\ref{ass:PL_f_global} hold, and $\widehat{L}>L$.    Then, for any  $N \ge 1$,  it holds that
								\begin{equation}\label{eq:main_bound_beta_global}
									d\big(\partial V_N(x),\partial V(x)\big)
									\ \le\
									L_{\kappa}(x) \beta_N, \quad \forall x\in\cX, 
								\end{equation}
								where the $x$-dependent factor $L_{\kappa}(x)$ is defined as
								$
								L_{\kappa}(x) := \left(D_{\cY}L_0 + \left(3+\frac{L}{\widehat{L}}\right)L\right)\sqrt{\frac{\widehat{L}}{\kappa_q(x)}}.
								$
							\end{theorem}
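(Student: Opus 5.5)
The plan is to use the standard formulas for the Clarke subdifferentials of the two max-type functions. Then each generator of $\partial V_N(x)$ is compared with a generator of $\partial V(x)$ through the quadratic-growth estimate of Lemma~\ref{lem:QG_f_to_QG_q_global}.

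\emph{Step 1: structure of $q$.} For each fixed $y$, the inner problem in \eqref{eq:q} is $\widehat L$-strongly concave in $z$ with unique maximizer
\[
z^*(x,y)=\Pi_{\cY}\bigl(y+\widehat L^{-1}\nabla_y f(x,y)\bigr).
\]
Since $f$ is $C^2$, Danskin's theorem gives that $q(\cdot,y)$ is continuously differentiable with
\[
\nabla_x q(x,y)=\nabla_x f(x,y)+\nabla^2_{xy} f(x,y)\,\bigl(z^*(x,y)-y\bigr),
\]
where $\nabla^2_{xy}f$ is understood as the $n\times m$ block. Hence $V_N$ is a finite max of $C^1$ functions, and
\[
\partial V_N(x)=\mathrm{co}\{\nabla_x q(x,y_j):\ j\in J_N(x)\},
\]
with $J_N(x)$ the set of active indices. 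Likewise, by Danskin/Clarke for a max over the compact set $\cY$,
\[
\partial V(x)=\mathrm{co}\{\nabla_x f(x,y):\ y\in S_0(x)\}.
\]
Because $\partial V(x)$ is convex, it suffices to show that every generator $\nabla_x q(x,y_j)$ with $j\in J_N(x)$ lies within distance $L_\kappa(x)\beta_N$ of some $\nabla_x f(x,\bar y_j)$ with $\bar y_j\in S_0(x)$. For a convex combination $\sum\lambda_j\nabla_x q(x,y_j)$, the point $\sum\lambda_j\nabla_x f(x,\bar y_j)$ belongs to $\partial V(x)$, and the triangle inequality transfers the bound.

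\emph{Step 2: active samples are close to $S_0(x)$.} For $j\in J_N(x)$, Theorem~\ref{thm:value solution-gap} gives
\[
V(x)-q(x,y_j)=V(x)-V_N(x)\le\widehat L\beta_N^2.
\]
Lemma~\ref{lem:QG_f_to_QG_q_global} then yields $\kappa_q(x)\,\dist^2(y_j,S_0(x))\le \widehat L\beta_N^2$. Let $\bar y_j\in S_0(x)$ be a nearest point; $S_0(x)$ is compact, so one exists. Then
\[
\|y_j-\bar y_j\|\le\sqrt{\widehat L/\kappa_q(x)}\,\beta_N.
\]

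\emph{Step 3: Lipschitz-type comparison of gradients.} This is the main computational step. Since $\bar y_j$ maximizes $f(x,\cdot)$ over $\cY$, we have $\nabla_y f(x,\bar y_j)\in\cN_{\cY}(\bar y_j)$. Hence $z^*(x,\bar y_j)=\bar y_j$, and therefore $\nabla_x q(x,\bar y_j)=\nabla_x f(x,\bar y_j)$. Write $H(y):=\nabla^2_{xy}f(x,y)$, $z_j:=z^*(x,y_j)$ and $\bar z_j:=z^*(x,\bar y_j)=\bar y_j$. Then
\[
\nabla_x q(x,y_j)-\nabla_x f(x,\bar y_j)=[\nabla_x f(x,y_j)-\nabla_x f(x,\bar y_j)]+[H(y_j)-H(\bar y_j)](z_j-y_j)+H(\bar y_j)\bigl[(z_j-y_j)-(\bar z_j-\bar y_j)\bigr].
\]
The three terms are bounded as follows.
\begin{itemize}
\item The first term is at most $L\|y_j-\bar y_j\|$ by \eqref{eq:Lipschtiz_smooth}.
\item The second term is at most $L_0D_{\cY}\|y_j-\bar y_j\|$ by \eqref{eq:mainLxyy_ass_compact_short}, using $z_j,y_j\in\cY$.
\item For the third term, $\|H\|\le L$. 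Nonexpansiveness of $\Pi_{\cY}$ together with \eqref{eq:Lipschtiz_smooth} gives $\|z_j-\bar z_j\|\le(1+L/\widehat L)\|y_j-\bar y_j\|$. Hence the third term is at most $L(2+L/\widehat L)\|y_j-\bar y_j\|$.
\end{itemize}
Summing gives the constant $D_{\cY}L_0+(3+L/\widehat L)L$. Multiplying by the bound from Step~2 gives exactly $L_\kappa(x)\beta_N$, which proves \eqref{eq:main_bound_beta_global}.

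\emph{Main obstacle.} The delicate point is Step~1: justifying the Clarke subdifferential formulas. This requires continuity of $z^*(x,y)$ in $(x,y)$, which follows from the projection formula. It also requires Clarke regularity of the max functions, so that $\partial V$ is exactly the convex hull over $S_0(x)$. Given that, the estimate reduces to the elementary computation in Step~3.
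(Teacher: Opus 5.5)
Your proposal is correct and follows essentially the same route as the paper: Theorem~\ref{thm:value solution-gap} and Lemma~\ref{lem:QG_f_to_QG_q_global} put every active sample within $\sqrt{\widehat L/\kappa_q(x)}\,\beta_N$ of $S_0(x)$, and the generators are then compared via the same three-term decomposition that the paper packages as the $y$-Lipschitz bound on $\nabla_x q(x,\cdot)$ in Lemma~\ref{lem:explicit_Lxqy_short}. The differences are cosmetic. You write $\partial V(x)$ via Danskin applied to $f$ and identify $\nabla_x q(x,\bar y)=\nabla_x f(x,\bar y)$ through $z^*(x,\bar y)=\bar y$, whereas the paper uses $\arg\max_{y}q(x,y)=S_0(x)$; and you pass to convex hulls by a direct convex-combination argument instead of citing the Iusem--Seeger property.
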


							For any $x\in \cX$, define the Clarke-stationarity residuals of \eqref{eq:value_problem} and \eqref{eq:PN} by
							\begin{equation}
								\label{eq:rrn}
								\begin{aligned}
									r(x) &:=\mathrm{dist}\big(0,\partial V(x)+  {\cN}_{\cX}(x)\big),\quad
									r_N(x) :=\mathrm{dist}\big(0,\partial V_N(x)+  {\cN}_{\cX}(x)\big).
								\end{aligned}
							\end{equation}
							To translate bounds on the subdifferential sets into bounds on the distance to the stationary set, we follow~\cite{liu2014quantitative} by defining the level set $A_{\mathcal{D}}(\tau):= \{x\in\cX : \, \dist(x,\mathcal{D})\ge \tau\}$ for any $\tau \ge 0$. We then introduce the error bound modulus $\mathcal{R}_{\mathcal{D}}: \mathbb{R}_+ \to \overline{\mathbb{R}}_+$ as
							\begin{equation}\label{eq:Rs-def}
								\cR_{\cD}(\tau):=
								\begin{cases}
									\displaystyle \inf \{\,r(x):\, x\in A_{\cD}(\tau) \,\}, & \text{if $A_{\cD}(\tau)\neq\emptyset$},\\
									+\infty, & \text{otherwise}.
								\end{cases}
							\end{equation} 
							Since \(V\) is continuous on the compact set \(\cX\), it has a global minimizer, say \(\bar x\in\cX\). Moreover, \(V\) is locally Lipschitz and \(\cX\) is closed and convex. Clarke's necessary optimality condition at \(\bar x\) therefore yields
							$
							0\in\partial V(\bar x)+\cN_{\cX}(\bar x).
							$
							Hence \(\cD\neq\emptyset\). Moreover, the Clarke subdifferential and normal-cone mappings have closed graphs, which implies the lower semicontinuity of \(r\). Therefore, the infimum \(\cR_{\cD}(\tau)>0\) over the compact set \(A_{\cD}(\tau)\) is positive for every \(\tau>0\).
							Its generalized inverse $\mathcal{R}_{\mathcal{D}}^{-1}:\mathbb{R}_+\to \mathbb{R}_+$ is given by
							\begin{equation}\label{eq:Rs-inv-def}
								\cR_{\cD}^{-1}(t)
								:= \sup\{\, \tau \ge 0 : \cR_{\cD}(\tau) \le t \,\}.
							\end{equation}
							By construction, $\mathcal{R}_{\mathcal{D}}^{-1}$ is well-defined, nondecreasing, right-continuous, and satisfies $\mathcal{R}_{\mathcal{D}}^{-1}(t)\downarrow 0$ as $t\downarrow 0$. The following proposition gives a bound on the distance to the stationary set, whose proof is given in Appendix~\ref{appen:5}.
							
							{\begin{proposition} \label{thm:conv_stat}
									Under the conditions of Theorem~\ref{thm:main_betaN_global}, for any stationary point $x_N \in \mathcal{D}_N$, its distance to $\mathcal{D}$ can be bounded by
									$
									\dist(x_N, \mathcal{D}) \leq \mathcal{R}_{\mathcal{D}}^{-1}\!\big(L_{\kappa}(x_N)\beta_N \big).
									$
									Consequently, it holds that
									$
									d({\cD}_N,\cD) \leq \cR_{\cD}^{-1}\big(\sup_{x \in \mathcal{D}_N} L_{\kappa}(x)\beta_N \big).
									$
								\end{proposition}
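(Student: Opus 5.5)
The argument is a direct translation of the subdifferential bound in Theorem~\ref{thm:main_betaN_global} into a bound on the residual $r$, followed by an inversion of the modulus $\cR_{\cD}$.

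\textbf{Step 1: bound the residual at $x_N$.} Fix $x_N\in\cD_N$. Stationarity gives $g_N\in\partial V_N(x_N)$ and $v_N\in\cN_{\cX}(x_N)$ with $g_N+v_N=0$. The set $\partial V(x_N)$ is nonempty, convex and compact, since $V$ is locally Lipschitz. Hence there is $g\in\partial V(x_N)$ with
\[
\|g-g_N\|=\dist\big(g_N,\partial V(x_N)\big)\le d\big(\partial V_N(x_N),\partial V(x_N)\big)\le L_\kappa(x_N)\beta_N,
\]
where the last inequality is Theorem~\ref{thm:main_betaN_global}. Because $g+v_N\in\partial V(x_N)+\cN_{\cX}(x_N)$, we get
\[
r(x_N)\le\|g+v_N\|=\|g-g_N\|\le L_\kappa(x_N)\beta_N.
\]

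\textbf{Step 2: invert the modulus.} Set $\tau:=\dist(x_N,\cD)$. If $\tau=0$ there is nothing to prove. Otherwise $x_N\in A_{\cD}(\tau)$, so $A_{\cD}(\tau)$ is nonempty and
\[
\cR_{\cD}(\tau)\le r(x_N)\le t:=L_\kappa(x_N)\beta_N .
\]
Thus $\tau$ belongs to $\{\tau'\ge0:\cR_{\cD}(\tau')\le t\}$, and by \eqref{eq:Rs-inv-def} we conclude $\tau\le\cR_{\cD}^{-1}(t)$. This is the first claim.

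\textbf{Step 3: uniform version.} The function $\cR_{\cD}^{-1}$ is nondecreasing. Therefore, for every $x\in\cD_N$,
\[
\dist(x,\cD)\le\cR_{\cD}^{-1}\Big(\sup_{x'\in\cD_N}L_\kappa(x')\beta_N\Big),
\]
where $\cR_{\cD}^{-1}(+\infty)$ is interpreted as the supremum, which is finite since $\cX$ is compact. Taking the supremum over $x\in\cD_N$ gives the bound on $d(\cD_N,\cD)$.

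\textbf{Main subtlety.} The only delicate point is the bookkeeping in Step 1. The excess distance $d(\partial V_N,\partial V)$ controls exactly the direction needed, namely elements of $\partial V_N$ approximated by elements of $\partial V$, and this must be combined with the \emph{same} normal vector $v_N$. No other obstacle is expected. If $\cD_N$ is empty, the supremum bound is trivial.
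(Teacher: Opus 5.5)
Your proof is correct and follows the same route as the paper. The paper bounds $r(x_N)\le r_N(x_N)+d(\partial V_N(x_N),\partial V(x_N))\le L_\kappa(x_N)\beta_N$, then uses $r(x_N)\ge\cR_{\cD}(\dist(x_N,\cD))$, inverts the modulus, and passes to the supremum by monotonicity of $\cR_{\cD}^{-1}$. Your Step~1 unpacks that triangle inequality with an explicit pair $(g,v_N)$, and your Step~2 carries out the inversion directly from the definition of the supremum, which is slightly more careful than the paper's ``apply $\cR_{\cD}^{-1}$ to both sides.''
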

								
								\begin{remark}\label{rem:4}
									The term
									\(\sup_{x\in\cD_N} L_{\kappa}(x)\) in Proposition~\ref{thm:conv_stat} is automatically finite whenever the PL modulus in
									Assumption~\ref{ass:PL_f_global} is uniformly bounded away from zero on~\(\cX\).
									Indeed, if it holds that $
									\underline{\kappa}_p:=\inf_{x\in\cX}\kappa_p(x)>0,$
									then \eqref{eq:mu_q_global} implies that
									$
									\kappa_q(x)\ge
									\underline{\kappa}_q:=
									\frac{\underline{\kappa}_p(\widehat{L}-L)}
									{2\underline{\kappa}_p+8(\widehat{L}-L)}
									>0,\ \forall x\in\cX.
									$
									Consequently, it holds
									\[
									L_{\kappa}(x)\le
									L_{\kappa}^{\max}:=
									\left(D_{\cY}L_0+\left(3+\frac{L}{\widehat{L}}\right)L\right)
									\sqrt{\frac{\widehat{L}}{\underline{\kappa}_q}},
									\qquad \forall x\in\cX.
									\]
									Since
									\(\cD_N\subset\cX\) is nonempty, \(\sup_{x\in\cD_N}L_{\kappa}(x)\) is well defined and satisfies
									\[
									\sup_{x\in\cD_N}L_{\kappa}(x)\le L_{\kappa}^{\max}<\infty,
									\qquad \forall N\ge 1.
									\]
									Then, Proposition~\ref{thm:conv_stat} simplifies to the uniform estimate
									$
									d(\cD_N,\cD)\le \cR_{\cD}^{-1}\!\big(L_{\kappa}^{\max}\beta_N\big).
									$
									
									Under the same assumptions, an entirely parallel argument for the standard stochastic
									approximation \(\overline V_N\) yields
									\[
									d(\overline{\cD}_N,\cD)
									\le
									\cR_{\cD}^{-1}\!\Big(
									\sup_{x\in\overline{\cD}_N}L_{\rm dir}(x)\sqrt{\beta_N}
									\Big),
									\]
									where
									$
									L_{\rm dir}(x):=L\sqrt{\frac{8L}{\kappa_p(x)}}.
									$
									Therefore, the majorant approximation improves the dependence on the coverage radius from  \(\sqrt{\beta_N}\)  to order \(\beta_N\) inside the error-bound modulus.
							\end{remark}}

							\section{A sequential smoothing majorant stochastic approximation method}\label{sec:algorithm} 
							
							In this section, we propose a sequential smoothing majorant stochastic
							approximation (SMSA) method that approximately solves a sequence of smoothed
							majorant subproblems. For a prescribed terminal sample cap \(\overline{N}\), the
							method first draws a   sample set
							$
							Y_{\overline{N}}:=\{y_1,\ldots,y_{\overline{N}}\}
							$
							of i.i.d. samples from the sampling distribution on \(\cY\). At each outer
							iteration, the method uses a larger prescribed prefix of this sample set,
							decreases the smoothing parameter, and approximately minimizes the corresponding
							smoothed majorant subproblem over~\(\cX\) by projected gradient. We first present
							the algorithmic framework and then establish its convergence properties.

							
							
							
							\subsection{Algorithm framework}\label{subsec:3.2-framework}
							The ultimate objective is the value function \(V(x)=\max_{y\in \cY} f(x,y)=\max_{y\in \cY} q(x,y)\), whereas the algorithm operates on the majorant stochastic approximation
							$
							V_N(x)=\max_{i\in[N]} q(x,y_i) 
							$
							at each iteration. 
							For each fixed sample $y_i\in Y_N$, Danskin's theorem together with Assumption~\ref{ass:smoothness} implies that the mapping $x\mapsto q(x,y_i)$ is differentiable on $\cX$.   Nevertheless, the function \( V_N \) itself is generally nonsmooth, limiting the direct applicability of first-order gradient methods. To overcome this difficulty, we smooth \(V_N\) by the standard log-sum-exp (LSE) approximation \citep{burke2020subdifferential,shapiro2009lectures}. For any \(\mu>0\), define
							\begin{equation}\label{eq:lse-def-3.1}
								\widetilde V_N(x,\mu)
								:=\mu\log \Big(\sum_{i=1}^N \exp \big(q(x,y_i)/\mu\big)\Big),
							\end{equation}
							where the parameter \(\mu\) controls the bias introduced by smoothing.
							
							To control both the finite-sample approximation error and the smoothing bias,
							we adopt a finite-budget continuation scheme. Specifically, we first prescribe
							\(\overline{N}\ge 2\).  We then generate an increasing sequence
							\(\{N_k\}_{k=0}^{K}\) satisfying
							$
							2\le N_0 < N_1 < \cdots < N_{K}=\overline{N}.
							$ Equivalently, one may use any
							update rule satisfying
							$
							N_{k+1}\in\{N_k+1,\ldots,\overline{N}\},  \text{ whenever } N_k<\overline{N}.
							$
							We then draw a sample set
							\begin{equation}
								\label{eq:YbarN}
								Y_{\overline{N}}:=\{y_1,\ldots,y_{\overline{N}}\}, \text{ with }
								y_i\stackrel{\mathrm{i.i.d.}}{\sim}\mathbb{P}, \text{ for all } i=1,\ldots,\overline{N},
							\end{equation}
							where \(\mathbb{P}\) is the sampling distribution on \(\cY\). For each stage \(k\), the
							active sample set is the prescribed prefix
							\begin{equation}\label{eq:ynk}
								Y_{N_k}:=\{y_1,\ldots,y_{N_k}\}\subseteq Y_{\overline{N}},
								\qquad k=0,\ldots,K .
							\end{equation}
							Consequently,
							$
							Y_{N_0}\subseteq Y_{N_1}\subseteq\cdots\subseteq Y_{N_K}=Y_{\overline{N}}.
							$
							In particular, for each fixed \(N_k\), the elements
							of \(Y_{N_k}\) are i.i.d. \(\mathbb{P}\)-distributed samples. 
							Let \(\{\mu_k\}_{k=0}^K\) and \(\{\epsilon_k\}_{k=0}^K\) be positive nonincreasing sequences satisfying
							$
							0<\mu_k\le \epsilon_k\le 1 .
							$
							At outer iteration \(k\), the method approximately solves the smoothed subproblem
							\[
							\min_{z\in\cX}\ \widetilde V_{N_k}(z,\mu_k)
							\]
							by projected gradient, using \(x^{(k)}\) as a warm start. The outer loop stops
							once \(N_k=\overline{N}\).

							Lemma~\ref{lem:lse-basic}(a) below shows that the smoothed function \(\widetilde{V}_{N_k}(\cdot,\mu_k)\) uniformly upper-approximates \(V_{N_k}\) with an approximation error bounded by \(\mu_k\log N_k\). Meanwhile, Lemma~\ref{lem:lse-basic}(c) demonstrates that the corresponding gradient Lipschitz constant scales inversely with \(\mu_k\). Hence \(\mu_k\) should decrease gradually: taking \(\mu_k\) too large yields poor approximation accuracy, whereas taking it too small makes the subproblem increasingly ill-conditioned. We therefore solve each smoothed subproblem only to a prescribed   first-order tolerance \(\epsilon_k\), measured by
							\begin{equation}\label{eq:residual-def-3.2}
								r_k(z):=\dist\bigl(0,\nabla_z\widetilde{V}_{N_k}(z,\mu_k)+\mathcal{N}_{\mathcal{X}}(z)\bigr).
							\end{equation}
							Once the residual satisfies \(r_k(z)\le \epsilon_k\), the inner loop terminates, and the resulting iterate is carried forward to initialize the next outer iteration. Algorithm~\ref{alg:growing-N} provides a formal description of the procedure, and finite termination of the inner loop is established later in Lemma~\ref{thm:inner_complexity}(b).
							
							\begin{algorithm}[htbp!]
								\caption{A sequential smoothing majorant stochastic approximation method}
								\label{alg:growing-N}
								\begin{algorithmic}[1]
									
									\State \textbf{Input:} Initial point \(x^{(0)}\in\cX\), \(\widehat L>L\), \(\overline{N}\ge2\),  $N_0 \ge 2$,
									a sample set $Y_{\overline{N}}$ satisfying~\eqref{eq:YbarN},  positive nonincreasing smoothing parameters \(\{\mu_k\}\), and positive nonincreasing tolerances \(\{\epsilon_k\}\) satisfying \(0<\mu_k\le\epsilon_k\le1\). Let   \(k=0\).
									
									\While{\(N_k\le \overline{N}\)}
									\State 
									Set
									$
									Y_{N_k}
									$
									by~\eqref{eq:ynk}.
									Construct \(\widetilde V_{N_k}(\cdot,\mu_k)\) via~\eqref{eq:lse-def-3.1}.
									\State Initialize the inner loop: set \(z^{(0)}=x^{(k)}\) and
									\(\alpha_k=1/L_{\mu_k}\), where \(L_{\mu_k}\) is defined
									in~\eqref{eq:lse-Lmu-3.1}.
									\For{\(t=0,1,2,\ldots\)}
									\State Compute \(z^{(t+1)}\) by the projected gradient step
									\begin{equation}\label{eq:update-3.15}
										z^{(t+1)}
										=
										\Pi_{\cX}\Big(
										z^{(t)}-\alpha_k\nabla_z\widetilde V_{N_k}(z^{(t)},\mu_k)
										\Big).
									\end{equation}
									\If{\(r_k(z^{(t+1)})\le \epsilon_k\)}
									\State Set \(x^{(k+1)}=z^{(t+1)}\), and terminate the inner loop.
									\EndIf
									\EndFor
									\If{\(N_k=\overline{N}\)}
									\State \textbf{return} \(x^{(k+1)}\) and \(Y_{\overline{N}}\).
									\EndIf
									\State Choose the next sample size
									\(N_{k+1}\in\{N_k+1,\ldots,\overline{N}\}\), and set \(k\leftarrow k+1\).
									\EndWhile
								\end{algorithmic}
							\end{algorithm}
							
						}

						{
							\subsection{Convergence analysis}\label{subsec:3.3-conv}
							In this subsection, we establish the convergence properties of Algorithm~\ref{alg:growing-N}. Our analysis begins with two preliminary lemmas. The first lemma provides a uniform Lipschitz continuity bound for each of the component functions \(\{q(\cdot,y_i)\}_{i=1}^N\); its proof can be found in Appendix~\ref{appen:B1}.  
							\begin{lemma}\label{lem:vi-lipschitz}
								Suppose that Assumption~\ref{ass:smoothness} holds.
								For each $i\in[N]$,  $q(\cdot,y_i)$ is $G_v$-Lipschitz continuous on $\cX$ with
								$
								G_v:=L(1+D_{\cY}).
								$
								Moreover,
								$
								\|\nabla_x q(x,y_i)\|\le G_v$ for all $x\in\cX$ and $i\in[N].
								$
							\end{lemma}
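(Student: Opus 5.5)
The plan is to compute $\nabla_x q(x,y_i)$ explicitly via Danskin's theorem and bound it term by term; the Lipschitz claim then follows from the mean value theorem on the convex set $\cX$. Fix $y:=y_i\in\cY$ and write
\[
\psi(x,z):=\langle\nabla_y f(x,y),\,z-y\rangle-\frac{\widehat L}{2}\|z-y\|^2,
\qquad q(x,y)=f(x,y)+\max_{z\in\cY}\psi(x,z).
\]
By Assumption~\ref{ass:smoothness}, $\psi$ is continuously differentiable in $x$, with $\nabla_x\psi(x,z)=\nabla^2_{xy}f(x,y)\,(z-y)$ (more precisely, the transpose of the mixed Jacobian acting on $z-y$). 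It is also jointly continuous on a neighborhood of $\cX\times\cY$.

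The maximizer over the compact set $\cY$ is unique, namely $z^*(x)=\Pi_{\cY}\bigl(y+\widehat L^{-1}\nabla_y f(x,y)\bigr)$. Danskin's theorem therefore gives differentiability of $x\mapsto\max_{z}\psi(x,z)$, and
\[
\nabla_x q(x,y)=\nabla_x f(x,y)+\nabla^2_{xy}f(x,y)\,\bigl(z^*(x)-y\bigr).
\]
Since $z^*(x)$ is continuous in $x$, this gradient is continuous in $x$.

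Next I bound the two terms separately. By the convention fixed in Section~\ref{subsec:value-relations}, $L\ge L_f\ge\|\nabla f\|$, so $\|\nabla_x f(x,y)\|\le L$. For the mixed second derivative, the gradient-Lipschitz bound \eqref{eq:Lipschtiz_smooth} with constant $L$ implies $\|\nabla^2 f(x,y)\|\le L$ on $\cX\times\cY$: take directional limits of the difference quotients of $\nabla f$, using twice continuous differentiability and the fact that $\cX\times\cY$ is convex, hence the limits along interior directions are available, and extend by continuity. In particular, $\|\nabla^2_{xy}f(x,y)\|\le L$, since it is a block of $\nabla^2 f$. Finally, because $z^*(x)$ and $y$ both lie in $\cY$, we have $\|z^*(x)-y\|\le D_{\cY}$. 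Combining these gives $\|\nabla_x q(x,y_i)\|\le L+LD_{\cY}=G_v$.

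For the Lipschitz property, $\cX$ is convex and $q(\cdot,y_i)$ is continuously differentiable on a neighborhood of $\cX$, so for $x,x'\in\cX$ the mean value inequality along the segment gives $|q(x,y_i)-q(x',y_i)|\le G_v\|x-x'\|$.

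The main obstacle I expect is the careful application of Danskin's theorem. It needs $\psi(\cdot,z)$ to be differentiable on an open set containing $\cX$ (which is provided by the open neighborhood in Assumption~\ref{ass:smoothness}) together with uniqueness of the maximizer. A second delicate point is justifying the Hessian-block bound $\|\nabla^2_{xy}f\|\le L$ at boundary points of a possibly lower-dimensional $\cX\times\cY$. If that step is problematic, I would instead avoid differentiating twice: bound $|\max_z\psi(x,z)-\max_z\psi(x',z)|\le\sup_{z\in\cY}\|\nabla_y f(x,y)-\nabla_y f(x',y)\|\,\|z-y\|\le LD_{\cY}\|x-x'\|$ directly from \eqref{eq:Lipschtiz_smooth}. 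This yields the Lipschitz constant $G_v$ without the Hessian, and the gradient bound then follows from the Lipschitz property at any point where the gradient exists.
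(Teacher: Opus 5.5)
Your argument is correct, but your primary route differs from the paper's; your fallback is essentially the paper's proof. For the Lipschitz claim the paper never differentiates $q$. It takes an inner maximizer $y_i^*(x_1)$ at $x_1$, uses it as a feasible point of the inner problem at $x_2$, and gets
\[
q(x_1,y_i)-q(x_2,y_i)\le \bigl(f(x_1,y_i)-f(x_2,y_i)\bigr)+\bigl\langle \nabla_y f(x_1,y_i)-\nabla_y f(x_2,y_i),\,y_i^*(x_1)-y_i\bigr\rangle\le L(1+D_{\cY})\|x_1-x_2\|.
\]
It then swaps $x_1,x_2$ and reads the gradient bound off the Lipschitz property. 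That argument needs only $L\ge L_f$ and the Lipschitz continuity of $\nabla_y f$; it uses no Danskin theorem, no uniqueness of the maximizer, and no second derivatives.

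Your route uses more machinery: Danskin's formula $\nabla_x q=\nabla_x f+\nabla^2_{xy}f\,(z^*(x)-y)$, a termwise bound, and then the mean value inequality on the convex set $\cX$. In exchange, it bounds the full gradient directly at every $x\in\cX$. By contrast, extracting $\|\nabla_x q\|\le G_v$ from Lipschitz continuity on $\cX$ only controls derivatives along feasible directions. It therefore tacitly relies on $\cX$ having nonempty interior, together with continuity of $\nabla_x q$ up to the boundary. Your route also produces the gradient formula the paper later needs in Lemma~\ref{lem:explicit_Lxqy_short}.

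The Hessian step you flag as delicate is harmless. You never need the operator norm of $\nabla^2_{xy}f$, only $\|\nabla^2_{xy}f(x,y)(z^*(x)-y)\|$. Since $(x,y+t(z^*(x)-y))\in\cX\times\cY$ for $t\in[0,1]$, this quantity equals $\lim_{t\downarrow 0}t^{-1}\|\nabla_x f(x,y+t(z^*(x)-y))-\nabla_x f(x,y)\|$. By \eqref{eq:Lipschtiz_smooth} it is at most $L\|z^*(x)-y\|$, with no full-dimensionality of $\cX\times\cY$ required.
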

							
							The second lemma summarizes key properties of the log-sum-exp smoothing function \(\widetilde V_N\), which will be used in the subsequent convergence analysis.
							\begin{lemma}\label{lem:lse-basic}
								Suppose that Assumption~\ref{ass:smoothness} holds, and $\widehat{L}>L$. Then, the following statements hold.
								\begin{itemize}
									\item[(a)]
									For any $x\in\cX$ and $\mu>0$,
									\begin{equation}\label{eq:lse-sandwich-3.1}
										V_N(x)\ \le\ \widetilde V_N(x,\mu)\ \le\ V_N(x)+\mu\log N,
										\text{ and }
										\lim_{\mu\downarrow 0}\widetilde V_N(x,\mu)=V_N(x).
									\end{equation}
									
									\item[(b)]
									$\widetilde V_N(\cdot,\mu)$ is differentiable and its gradient is given by
									\[
									\nabla_x \widetilde V_N(x,\mu)
									=\sum_{i=1}^N p_i(x,\mu)\,\nabla_x q(x,y_i),
									\text{ where }
									p_i(x,\mu)
									:=\frac{\exp \big(q(x,y_i)/\mu\big)}{\sum_{j=1}^N \exp \big(q(x,y_j)/\mu\big)}.
									\]
									Moreover, for any $x\in\cX$, and $\mu_1,\mu_2>0$,
									\[
									\big|\widetilde V_N(x,\mu_1)-\widetilde V_N(x,\mu_2)\big|
									\le 2|\mu_1-\mu_2|\log N,
									\text{ and }
									\|\nabla_x \widetilde V_N(x,\mu)\|\le G_v.
									\]
									
									\item[(c)]
									For any $\mu>0$ and any $x,x'\in\cX$,
									\begin{equation}\label{eq:lse-Lmu-3.1}
										\|\nabla_x \widetilde V_N(x,\mu)-\nabla_x \widetilde V_N(x',\mu)\|
										\le L_\mu \|x-x'\|,
										\text{ where }
										L_\mu := D_{\cY}L_0 + \left(1+\frac{L}{\widehat{L}}\right)L  + \frac{2G_v^2}{\mu}.
									\end{equation}
								\end{itemize}
							\end{lemma}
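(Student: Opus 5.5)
The plan is to treat the three parts in order, using only the representation $q(x,y)=f(x,y)+\langle\nabla_y f(x,y),z(x,y)-y\rangle-\frac{\widehat L}{2}\|z(x,y)-y\|^2$. Here $z(x,y):=\Pi_{\cY}(y+\widehat L^{-1}\nabla_y f(x,y))$ is the unique inner maximizer. We also use Danskin's formula for $\nabla_x q(\cdot,y_i)$ and the bound $\|\nabla_x q(x,y_i)\|\le G_v$ from Lemma~\ref{lem:vi-lipschitz}.

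For (a), write $a_i:=q(x,y_i)/\mu$ and $a_{\max}:=\max_i a_i=V_N(x)/\mu$. The elementary inequalities $e^{a_{\max}}\le\sum_i e^{a_i}\le N e^{a_{\max}}$, after taking $\mu\log$, give $V_N\le\widetilde V_N\le V_N+\mu\log N$. The limit as $\mu\downarrow0$ then follows by squeezing.

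For (b), differentiability follows from the chain rule, since each $x\mapsto q(x,y_i)$ is $C^1$ (Danskin). The gradient is the softmax-weighted combination $\sum_i p_i\nabla_x q(x,y_i)$. Because the $p_i$ form a probability vector, convexity of the norm gives $\|\nabla_x\widetilde V_N\|\le\max_i\|\nabla_x q(x,y_i)\|\le G_v$. For the $\mu$-Lipschitz estimate, compute
\[
\partial_\mu \widetilde V_N(x,\mu)=\log\sum_i e^{a_i}-\sum_i p_i a_i=H(p)\in[0,\log N],
\]
where $H(p)$ is the Shannon entropy of the weights. By the mean value theorem, $|\widetilde V_N(x,\mu_1)-\widetilde V_N(x,\mu_2)|\le|\mu_1-\mu_2|\log N$, which is stronger than the stated factor $2$. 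Alternatively, one can use (a) twice, though that only gives a bound in terms of $\max(\mu_1,\mu_2)$, so the derivative argument is the cleaner route.

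For (c), split the gradient difference as
\[
\sum_i p_i(x)\bigl(\nabla_x q(x,y_i)-\nabla_x q(x',y_i)\bigr)+\sum_i\bigl(p_i(x)-p_i(x')\bigr)\nabla_x q(x',y_i).
\]

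The second term is handled by noting that the softmax map $\sigma$ applied to $a/\mu$ has Jacobian $\frac1\mu(\Diag(p)-pp^\top)$. Hence $\|\sum_i(p_i(x)-p_i(x'))g_i\|$, with $\|g_i\|\le G_v$, is bounded by $G_v$ times the $\ell_1$-variation of $p$. That variation is at most $\frac{2}{\mu}\max_i|q(x,y_i)-q(x',y_i)|$ along the segment (centered vectors, operator norm $\ell_\infty\to\ell_1$ of $\Diag(p)-pp^\top$ at most $2$). Combined with $G_v$-Lipschitzness of $q(\cdot,y_i)$, this yields $\frac{2G_v^2}{\mu}\|x-x'\|$.

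The main obstacle is the first term: showing that each $\nabla_x q(\cdot,y)$ is Lipschitz with constant $D_{\cY}L_0+(1+L/\widehat L)L$. By Danskin,
\[
\nabla_x q(x,y)=\nabla_x f(x,y)+\nabla^2_{yx}f(x,y)^\top(z(x,y)-y).
\]
We then bound three pieces:
\begin{itemize}
\item $\|\nabla_x f(x,y)-\nabla_x f(x',y)\|\le L\|x-x'\|$;
\item $\|\nabla^2_{xy}f(x,y)-\nabla^2_{xy}f(x',y)\|\,\|z-y\|\le L_0D_{\cY}\|x-x'\|$;
\item $\|\nabla^2_{xy}f(x',y)\|\,\|z(x,y)-z(x',y)\|$.
\end{itemize}
For the third piece, nonexpansiveness of $\Pi_{\cY}$ gives $\|z(x,y)-z(x',y)\|\le\widehat L^{-1}\|\nabla_y f(x,y)-\nabla_y f(x',y)\|\le\frac{L}{\widehat L}\|x-x'\|$. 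Together with $\|\nabla^2_{xy}f\|\le L$, which follows from \eqref{eq:Lipschtiz_smooth}, this piece is at most $\frac{L^2}{\widehat L}\|x-x'\|$. Summing the three pieces gives exactly $L_\mu$. The delicate point is justifying Danskin's formula with the $x$-dependent constraint-free inner problem; uniqueness of the maximizer and continuity of $z$ handle this.
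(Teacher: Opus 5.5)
Your proposal is correct and follows essentially the paper's route: the component smoothness in (c) is proved by the same three-piece Danskin decomposition as in Lemma~\ref{lem:explicit_Lxqy_short}, and your bound on the softmax-weight term via the Jacobian $\frac{1}{\mu}(\Diag(p)-pp^\top)$ is the standard log-sum-exp argument that the paper cites from \citep{liu2025stochasticsmoothingframeworknonconvexnonconcave}. Your entropy identity $\partial_\mu\widetilde V_N=H(p)\in[0,\log N]$ supplies a step the paper only calls standard, and it gives the sharper constant $\log N$ instead of $2\log N$.
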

							Statements~(a) and~(b) in Lemma~\ref{lem:lse-basic} represent standard properties of log-sum-exp smoothing.   Statement~(c) is derived from the uniform \((D_{\mathcal{Y}}L_0+(1+L/\widehat{L})L)\)-smoothness of each component function \( q(\cdot,y_j) \); a detailed proof can be found in Lemma~\ref{lem:explicit_Lxqy_short} of Appendix~\ref{appen:4}. For a comprehensive treatment and additional technical insights, readers may refer to~\citep[Lemmas~2.4--2.5]{liu2025stochasticsmoothingframeworknonconvexnonconcave}. Collectively, these results explicitly characterize the bias–smoothness trade-off inherent in LSE smoothing: the approximation error scales as \(\mu\log N\), whereas the gradient Lipschitz constant increases proportionally to \(1/\mu\). This observation provides a theoretical justification for employing the continuation strategy implemented in Algorithm~\ref{alg:growing-N}.
							
							We next analyze the projected-gradient inner loop at a fixed outer iteration. The proof of the following lemma is given in Appendix~\ref{appen:B2}.
							\begin{lemma}\label{thm:inner_complexity}
								Suppose that Assumption~\ref{ass:smoothness} holds, and $\widehat{L}>L$. 
								Fix an outer iteration \(0\le k\le K\), and let \(\{z^{(t)}\}_{t\ge 0}\) be the inner-loop sequence generated by Algorithm~\ref{alg:growing-N}. Then the following statements hold.
								\begin{itemize}
									\item[(a)] For every \(t\ge 0\),
									\begin{equation}\label{eq:inner_descent}
										\widetilde V_{N_k}(z^{(t+1)},\mu_k)-\widetilde V_{N_k}(z^{(t)},\mu_k)
										\le -\frac{L_{\mu_k}}{2}\|z^{(t+1)}-z^{(t)}\|^2.
									\end{equation}
									
									\item[(b)] The inner loop terminates after at most
									\begin{equation}\label{eq:Tk_bound}
										T_k=
										\left\lceil
										\frac{8C}{\mu_k\epsilon_k^2}
										\Big(
										\widetilde V_{N_k}(x^{(k)},\mu_k)-\min_{z\in\cX}\widetilde V_{N_k}(z,\mu_k)
										\Big)
										\right\rceil
									\end{equation}
									iterations, where
									$
									C:=D_{\cY}L_0 + \left(1+\frac{L}{\widehat{L}}\right)L  +2G_v^2.
									$
								\end{itemize}
							\end{lemma}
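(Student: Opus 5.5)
Part (a) is the standard projected-gradient descent argument for an $L_{\mu_k}$-smooth function with step $\alpha_k=1/L_{\mu_k}$. By Lemma~\ref{lem:lse-basic}(c), the descent lemma gives, for $F:=\widetilde V_{N_k}(\cdot,\mu_k)$,
\[
F(z^{(t+1)})\le F(z^{(t)})+\langle\nabla F(z^{(t)}),z^{(t+1)}-z^{(t)}\rangle+\tfrac{L_{\mu_k}}{2}\|z^{(t+1)}-z^{(t)}\|^2 .
\]
The projection characterization of \eqref{eq:update-3.15} reads
\[
\langle z^{(t)}-\alpha_k\nabla F(z^{(t)})-z^{(t+1)},\,z-z^{(t+1)}\rangle\le0 \quad \text{for all } z\in\cX .
\]
Taking $z=z^{(t)}$ yields $\langle\nabla F(z^{(t)}),z^{(t+1)}-z^{(t)}\rangle\le -L_{\mu_k}\|z^{(t+1)}-z^{(t)}\|^2$. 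Substituting this into the descent inequality gives \eqref{eq:inner_descent}.

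For (b), the plan is to bound the residual $r_k(z^{(t+1)})$ by the step length. The projection optimality condition gives
\[
L_{\mu_k}\bigl(z^{(t)}-z^{(t+1)}\bigr)-\nabla F(z^{(t)})\in\cN_{\cX}(z^{(t+1)}).
\]
Hence the vector
\[
\nabla F(z^{(t+1)})-\nabla F(z^{(t)})+L_{\mu_k}\bigl(z^{(t)}-z^{(t+1)}\bigr)
\]
belongs to $\nabla F(z^{(t+1)})+\cN_{\cX}(z^{(t+1)})$. By Lipschitz continuity of $\nabla F$, its norm is at most $2L_{\mu_k}\|z^{(t+1)}-z^{(t)}\|$, so
\[
r_k(z^{(t+1)})\le 2L_{\mu_k}\|z^{(t+1)}-z^{(t)}\| .
\]

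Next, suppose the loop has not terminated after $T$ steps, i.e., $r_k(z^{(t+1)})>\epsilon_k$ for $t=0,\dots,T-1$. Then $\|z^{(t+1)}-z^{(t)}\|^2>\epsilon_k^2/(4L_{\mu_k}^2)$ for each such $t$. Summing (a) telescopically gives
\[
\Delta_k:=\widetilde V_{N_k}(x^{(k)},\mu_k)-\min_{\cX}F\ \ge\ \sum_{t<T}\tfrac{L_{\mu_k}}2\|z^{(t+1)}-z^{(t)}\|^2\ >\ T\,\frac{\epsilon_k^2}{8L_{\mu_k}} .
\]
Therefore $T<8L_{\mu_k}\Delta_k/\epsilon_k^2$.

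It remains to replace $L_{\mu_k}$ by $C/\mu_k$, which is the only place the constants need care. Since $\mu_k\le1$,
\[
L_{\mu_k}=D_{\cY}L_0+(1+L/\widehat L)L+2G_v^2/\mu_k\le \bigl(D_{\cY}L_0+(1+L/\widehat L)L+2G_v^2\bigr)/\mu_k=C/\mu_k .
\]
This gives $T<8C\Delta_k/(\mu_k\epsilon_k^2)\le T_k$, so termination must occur within $T_k$ iterations. The quantity $\Delta_k$ is finite because $\cX$ is compact and $F$ is continuous, so the minimum is attained.

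I do not expect a real obstacle here. The one point to check is that the residual bound must be taken at $z^{(t+1)}$, not $z^{(t)}$, since the algorithm tests $r_k(z^{(t+1)})$; the normal-cone inclusion at $z^{(t+1)}$ is exactly what the projection provides, so this works.
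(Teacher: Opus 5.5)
Your proof is correct and follows essentially the same route as the paper. Part (a) uses the projection optimality condition together with the descent lemma. Part (b) uses the residual bound $r_k(z^{(t+1)})\le 2L_{\mu_k}\|z^{(t+1)}-z^{(t)}\|$, telescopes (a), and then applies $L_{\mu_k}\le C/\mu_k$. The only cosmetic difference is that you argue by contradiction (assuming every residual exceeds $\epsilon_k$), whereas the paper bounds the minimum squared residual by the average.
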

						}
						
						To establish a connection between approximate stationarity conditions of  \(\widetilde V_N\) and the stationarity conditions of the   nonsmooth function \(V_N\), we define the \(\epsilon\)-active index set by 
						\[
						I_{N,\epsilon}(x)
						:=
						\big\{ i\in[N]\mid q(x,y_i)\ge V_N(x)-\epsilon\big\},
						\]
						along with its corresponding \(\epsilon\)-approximate subdifferential
						\begin{equation}\label{eq:eps-subdiff}
							\partial_\epsilon V_N(x)
							:=
							\operatorname{co}
							\big\{
							\nabla_x q(x,y_i)\mid i\in I_{N,\epsilon}(x)
							\big\}.
						\end{equation}
						Since the index set is finite, it follows that for each fixed \(x\), there exists a threshold \(\bar\epsilon(x)>0\) such that \(I_{N,\epsilon}(x)=I_N(x)\) whenever \(0<\epsilon<\bar\epsilon(x)\), where \(I_N(x):=\{i\in[N]:q(x,y_i)=V_N(x)\}\). 
						Consequently, the approximate subdifferential \(\partial_\epsilon V_N(x)\) reduces to the Clarke subdifferential \(\partial V_N(x)\) as \(\epsilon\downarrow 0\). 
						
						\begin{definition} 
							\label{def:approx-stationary-majorant}
							 Fix \(\epsilon>0\). A point
							\(x\in\cX\) is called an $\epsilon$-approximate Clarke stationary point of
							the majorant stochastic approximation problem~\eqref{eq:PN} if
							$
							\operatorname{dist}\!\left(
							0,\,
							\partial_{\epsilon}V_N(x)+\cN_{\cX}(x)
							\right)
							\le \epsilon .
							$
						\end{definition}
						
						The following lemma quantitatively characterizes how well the gradient \(\nabla_x\widetilde V_N(x,\mu)\) approximates this \(\epsilon\)-subdifferential. Its proof is provided in Appendix~\ref{appen:B3}. 
						\begin{lemma}\label{lem:lse-grad-to-surrogate}
							Suppose that Assumption~\ref{ass:smoothness} holds, and $\widehat{L}>L$. 
							For any fixed sample set \(Y_N\),  \(x\in\cX\),  \(\mu>0\), and  \(\epsilon>0\), it holds that
							\begin{equation}\label{eq:lse-grad-to-surrogate}
								\operatorname{dist}\big(\nabla_x\widetilde V_N(x,\mu), \partial_\epsilon V_N(x)\big) \leq 2 G_v N\exp \Big(-\frac{\epsilon}{\mu}\Big),
							\end{equation}
							where $G_v$ is given in Lemma~\ref{lem:vi-lipschitz}.
						\end{lemma}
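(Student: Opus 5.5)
Write $I:=I_{N,\epsilon}(x)$ and $J:=[N]\setminus I$. The plan is to build an explicit element of $\partial_\epsilon V_N(x)$ that is close to $\nabla_x\widetilde V_N(x,\mu)$, using the gradient formula in Lemma~\ref{lem:lse-basic}(b),
\[
\nabla_x\widetilde V_N(x,\mu)=\sum_{i=1}^N p_i(x,\mu)\nabla_x q(x,y_i).
\]
The weights $p_i$ are nonnegative and sum to one. The natural candidate is the renormalized restriction of these weights to the active indices,
\[
g:=\sum_{i\in I}\frac{p_i}{P_I}\nabla_x q(x,y_i), \qquad P_I:=\sum_{i\in I}p_i .
\]
Here $I\neq\emptyset$, since any maximizing index lies in $I$, so $P_I>0$. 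By definition \eqref{eq:eps-subdiff}, $g$ is a convex combination of the gradients indexed by $I$, hence $g\in\partial_\epsilon V_N(x)$. It then suffices to bound $\|\nabla_x\widetilde V_N(x,\mu)-g\|$.

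Split the difference as
\[
\nabla_x\widetilde V_N-g=\sum_{i\in I}p_i\Big(1-\frac{1}{P_I}\Big)\nabla_x q(x,y_i)+\sum_{i\in J}p_i\nabla_x q(x,y_i).
\]
Lemma~\ref{lem:vi-lipschitz} gives $\|\nabla_x q(x,y_i)\|\le G_v$. The first sum therefore has norm at most $G_v(1-P_I)$, and the second has norm at most $G_v P_J$, where $P_J:=\sum_{i\in J}p_i$. Since $1-P_I=P_J$, the total is at most $2G_vP_J$.

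The key step is bounding the inactive mass $P_J$. Fix $i^*$ with $q(x,y_{i^*})=V_N(x)$. For each $i\in J$ we have $q(x,y_i)<V_N(x)-\epsilon$. Lower-bounding the denominator of $p_i$ by its single term $\exp(V_N(x)/\mu)$ gives
\[
p_i\le \exp\big((q(x,y_i)-V_N(x))/\mu\big)<e^{-\epsilon/\mu}.
\]
Summing over at most $N$ indices gives $P_J\le Ne^{-\epsilon/\mu}$. Combining the pieces yields $\operatorname{dist}(\nabla_x\widetilde V_N,\partial_\epsilon V_N(x))\le\|\nabla_x\widetilde V_N-g\|\le 2G_vNe^{-\epsilon/\mu}$, which is \eqref{eq:lse-grad-to-surrogate}.

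No step is a real obstacle; the only care needed is to use the convexity-preserving renormalization so that $g$ lies in the convex hull, rather than simply truncating the sum. The factor $2$ accounts for the two error sources: the renormalization error on $I$ and the discarded mass on $J$.
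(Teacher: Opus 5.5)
Your proposal is correct and follows the paper's proof essentially step for step. Both arguments renormalize the softmax weights over the $\epsilon$-active set to get an element of $\partial_\epsilon V_N(x)$, bound the gap by $2G_v$ times the inactive mass, and bound each inactive weight by $e^{-\epsilon/\mu}$; your explicit split (showing the renormalization error on $I$ is at most $G_v P_J$) simply spells out the factor $2$ that the paper states in one line.
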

						
						We then present a complexity result  in Theorem~\ref{thm:growing_N}, whose  
						proof is given in Appendix~\ref{appen:B4}.
						
						\begin{theorem} 
							\label{thm:growing_N}
							Suppose that Assumption~\ref{ass:smoothness} holds, and \(\widehat L>L\).
							Let \(\epsilon>0\) be prescribed,  $\overline N\ge \frac{4G_v}{\epsilon}$, and
							\(\bar x:=x^{(K+1)}\) be the output returned by
							Algorithm~\ref{alg:growing-N}.
							If it holds that
							$ 
							\epsilon^{(K)}\le \frac{\epsilon}{2}$,    and  
							\mbox{$\mu^{(K)}\le
							\frac{\epsilon^{(K)}}{2\log(\overline N)},
							$}
							then \(\bar x\) is an $\epsilon$-approximate Clarke stationary point of
							 problem~\eqref{eq:PN}, i.e., 
							\[
							\operatorname{dist}\!\left(
							0,\,
							\partial_{\epsilon}V_{\overline N}(\bar x)
							+\cN_{\cX}(\bar x)
							\right)
							\le \epsilon .
							\] 
						\end{theorem}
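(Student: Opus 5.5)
At the final outer iteration $k=K$ we have $N_K=\overline N$, and by Lemma~\ref{thm:inner_complexity}(b) the inner loop terminates. So the returned point $\bar x=x^{(K+1)}$ satisfies $r_K(\bar x)\le \epsilon^{(K)}$. That is, there exists $\xi\in\cN_{\cX}(\bar x)$ with
\[
\bigl\|\nabla_x\widetilde V_{\overline N}(\bar x,\mu^{(K)})+\xi\bigr\|\le \epsilon^{(K)}\le \tfrac{\epsilon}{2}.
\]
The plan is to replace $\nabla_x\widetilde V_{\overline N}(\bar x,\mu^{(K)})$ by a nearby element of $\partial_{\epsilon}V_{\overline N}(\bar x)$ using Lemma~\ref{lem:lse-grad-to-surrogate}, and then apply the triangle inequality:
\[
\operatorname{dist}\bigl(0,\partial_{\epsilon}V_{\overline N}(\bar x)+\cN_{\cX}(\bar x)\bigr)\le \tfrac{\epsilon}{2}+\operatorname{dist}\bigl(\nabla_x\widetilde V_{\overline N}(\bar x,\mu^{(K)}),\partial_{\epsilon}V_{\overline N}(\bar x)\bigr).
\]
This holds because $\partial_\epsilon V_{\overline N}(\bar x)$ is a compact convex set, so the nearest point $g$ exists, and $g+\xi$ lies in the sum set.

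It then remains to show the second term is at most $\epsilon/2$. Lemma~\ref{lem:lse-grad-to-surrogate}, applied with the tolerance $\epsilon$ (not $\epsilon^{(K)}$) and $\mu=\mu^{(K)}$, bounds it by
\[
2G_v\overline N\exp(-\epsilon/\mu^{(K)}).
\]
From $\mu^{(K)}\le \epsilon^{(K)}/(2\log\overline N)\le \epsilon/(4\log\overline N)$ we get $\epsilon/\mu^{(K)}\ge 4\log\overline N$. Hence the bound is at most
\[
2G_v\overline N\cdot\overline N^{-4}=2G_v\overline N^{-3}.
\]
Using $\overline N\ge 4G_v/\epsilon$ and $\overline N\ge2$, we have $2G_v\overline N^{-3}\le 2G_v/\overline N\le \epsilon/2$. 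Summing the two halves gives the claim.

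The only point needing care, and the main (mild) obstacle, is the choice of which tolerance enters $\partial_\epsilon$. Applying the lemma with $\epsilon^{(K)}$ would give only $\epsilon^{(K)}/\mu^{(K)}\ge 2\log\overline N$ and the bound $2G_v/\overline N$. That is still $\le\epsilon/2$ under the hypothesis. It yields membership in $\partial_{\epsilon^{(K)}}V_{\overline N}(\bar x)$, which is contained in $\partial_{\epsilon}V_{\overline N}(\bar x)$ since $I_{N,\epsilon}$ is monotone in $\epsilon$. So either route closes, and I would present the monotonicity route because it uses the hypotheses exactly as stated.
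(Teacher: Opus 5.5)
Your proposal is correct and matches the paper's proof: the paper also extracts $\xi\in\cN_{\cX}(\bar x)$ from the inner-loop stopping test, applies Lemma~\ref{lem:lse-grad-to-surrogate} with tolerance $\epsilon^{(K)}$ to obtain the bound $2G_v/\overline N\le\epsilon/2$, combines the two estimates by the triangle inequality, and concludes via the inclusion $\partial_{\epsilon^{(K)}}V_{\overline N}(\bar x)\subseteq\partial_{\epsilon}V_{\overline N}(\bar x)$. Your alternative of applying the lemma directly at tolerance $\epsilon$ is also valid. It gives the sharper bound $2G_v\overline N^{-3}$ and avoids the monotonicity step.
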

					  
\begin{remark} 
	\label{rem:limit_eps_stationarity}
	Suppose, in addition, that Assumption~\ref{ass:sample} holds.
	Let 
	\(\{\eta_\ell\}_{\ell\ge1}\) be a positive sequence with
	\(\eta_\ell\to0\), 
	and choose a  sequence $\left\{N_{\ell}\right\}_{\ell \geq 1}$ satisfying
	\mbox{$
	\bar{N}_{\ell} \geq \max \left\{\bar{N}_{\ell-1}+1,\left\lceil\frac{4 G_v}{\eta_{\ell}}\right\rceil\right\},
	$}
	where $G_v$ is given in Lemma \ref{lem:vi-lipschitz}.
	For each \(\ell\), suppose that
	\(\bar x^\ell\in\cX\) satisfies
	$
	\operatorname{dist}\!\left(
	0,\,
	\partial_{\eta_\ell}
	V_{\overline N_\ell}(\bar x^\ell)
	+\cN_{\cX}(\bar x^\ell)
	\right)
	\le \eta_\ell .
	$  
	Then, with probability one, every accumulation point \(x^*\) of
	\(\{\bar x^\ell\}_{\ell\ge1}\) is a Clarke stationary point of
	problem~\eqref{eq:value_problem}; that is,
	$
	0\in\partial V(x^*)+\cN_{\cX}(x^*) .
	$
	The proof is provided in Appendix~\ref{appen:B4}.
\end{remark}

						\section{Numerical experiments}\label{sec:numer} 	
						In this section, we evaluate the majorant   stochastic approximation and the  SMSA method
						on two classes of test problems: a toy minimax example and a real-world robust logistic regression problem. The first experiment isolates the effects of sample sizes on  the resulting approximation errors. The second experiment assesses out-of-sample robustness under structured feature perturbations. 
						 {Throughout this section, \(N\) denotes the sample size of a  
						fixed sampled model \(V_N\) or \(\overline V_N\). When Algorithm~1 is
						used, \(N_k\) denotes the active sample size at stage \(k\), whereas
						\(\overline N\) denotes the prescribed sample size.}
						All experiments are implemented in Python on a MacBook Pro with an Apple M2 Pro chip and 16 GB of memory. Additional implementation details are provided in
						Appendix~\ref{app:numerics}.

						\subsection{A toy example}\label{subsec:toy_example}
						For $m\ge 2$, set $\cY=[-\pi,\pi]^m$ and $\cX=[-1,1]^2$. Let
						\[
						S_o=\Diag(1,0,1,0,\ldots),\quad S_e=\Diag(0,1,0,1,\ldots),\quad
						A_m=\frac{1}{5}\begin{bmatrix}\mathbf{1}^\top S_o\\ \mathbf{1}^\top S_e\end{bmatrix},\quad
						\psi(x)=\begin{bmatrix}x_1^2-x_2^2\\2x_1x_2\end{bmatrix}.
						\]
						With componentwise sine and cosine, consider the following parametric minimax problem
						\begin{equation}\label{eq:toy_family}
							\min_{x\in\cX}\left\{V(x):=\max_{y\in\cY}F_m(x,y)\right\},\text{ where }
							F_m(x,y):=\psi(x)^\top A_my-\mathbf{1}^\top\big(S_o\sin(y)+S_e\cos(y)\big).
						\end{equation}
						This construction provides a toy nonconvex--nonconcave test instance for which the reference value function $V$ and the optimal value
						$
						v^*=\min_{x\in\cX}V(x)
						$
						can be obtained; see Appendix~\ref{app:toy-details}. 
						
						For each fixed \(m\) and each     fixed-model sample size
						\(N\), we first draw  
						$Y_N:=\{y_1,\ldots,y_{N}\}$  with \mbox{\(y_i\sim \mathrm{Unif}([-\pi,\pi]^m)\)} for all $i=1,2,\ldots,N$.
						Using the same  \(Y_N\),  we compare the standard  approximation $\overline V_N(x):=\max_{i\in[N]}F_m(x,y_i)$
						with the   majorant   approximation
						$V_N(x):=\max_{i\in[N]}q_m(x,y_i)$, where $q_m$ is defined by~\eqref{eq:q} with $f=F_m$ and  		$
						\widehat L=1.5.
						$

						\paragraph{Effect of sample size.}
						We first fix $m=5$ and vary $  
						 {N}\in\{20,50,100,200\}.
						$ 
						Figure~\ref{fig:toy-surfaces-fixedm} displays the reference surface $V$ together with the two stochastic approximations $\overline V_{ {N}}$ and $V_{ {N}}$. As $ {N}$ increases, both approximations move closer to the reference surface. The majorant approximation $V_{ {N}}$, however, tracks $V$ more closely under the same sample set.
						
						\begin{figure}[htbp]
							\centering
							\begin{tabular}{@{}cccc@{}}
								\includegraphics[width=0.235\linewidth]{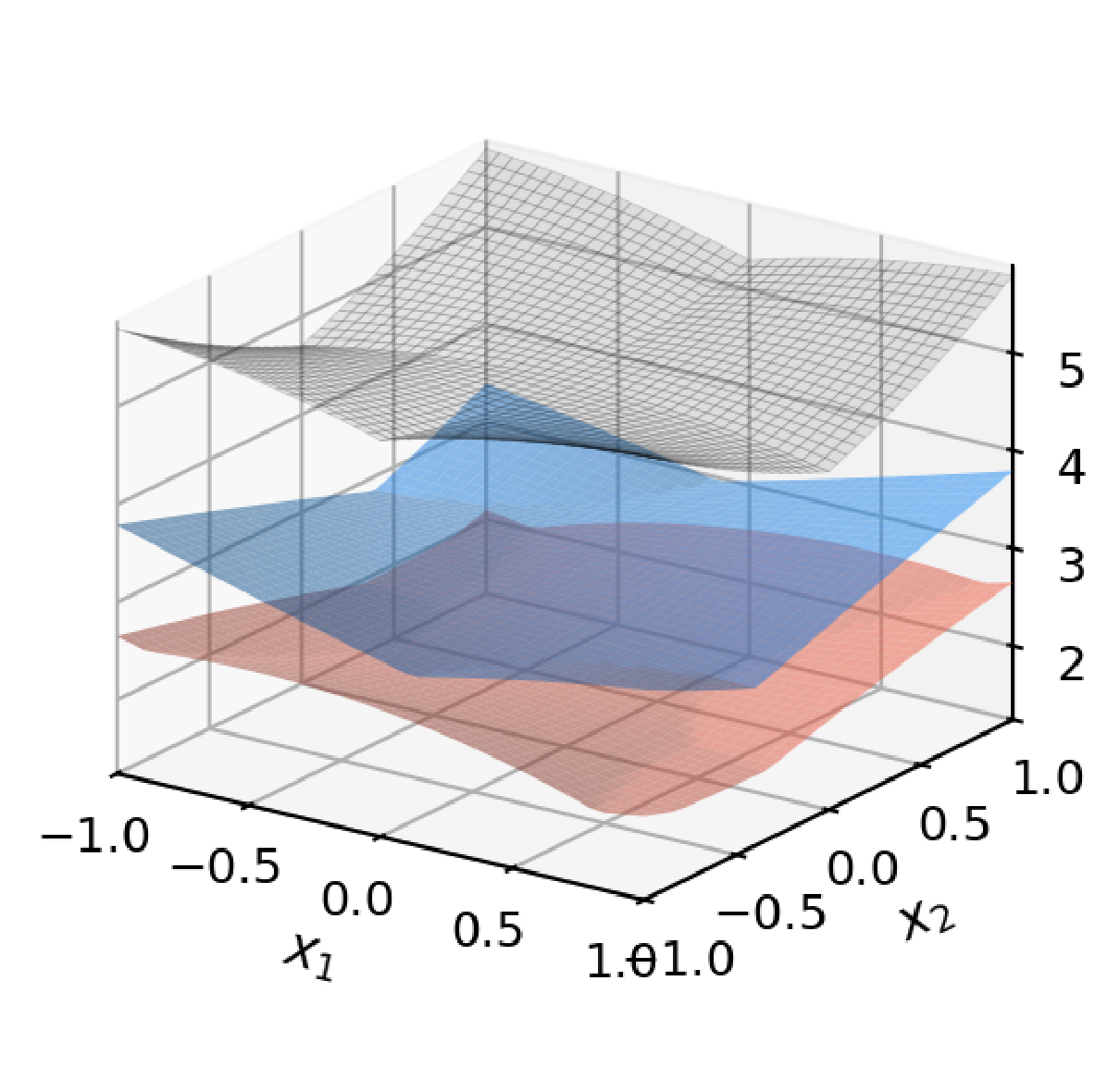} &
								\includegraphics[width=0.235\linewidth]{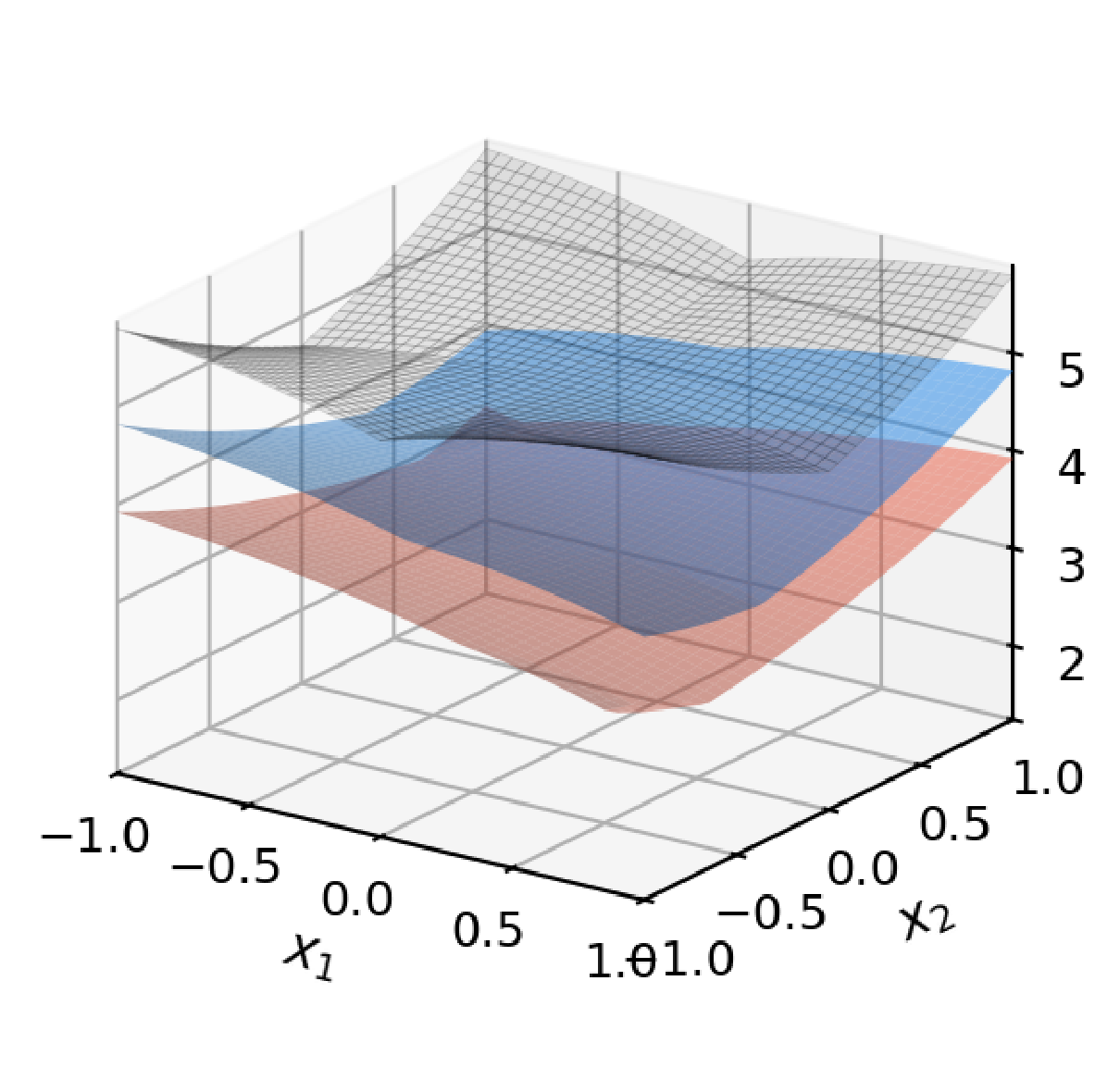} &
								\includegraphics[width=0.235\linewidth]{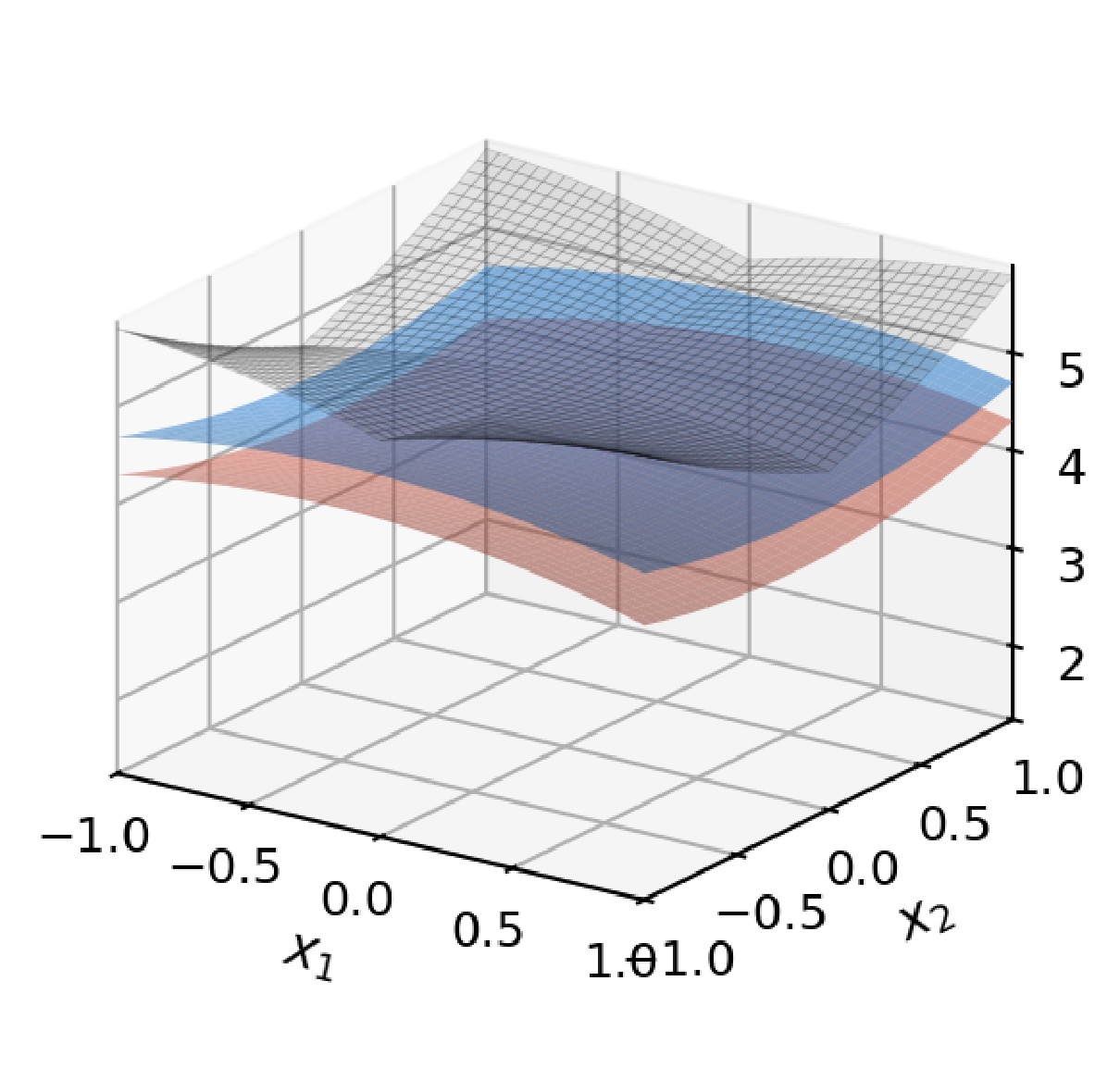} &
								\includegraphics[width=0.235\linewidth]{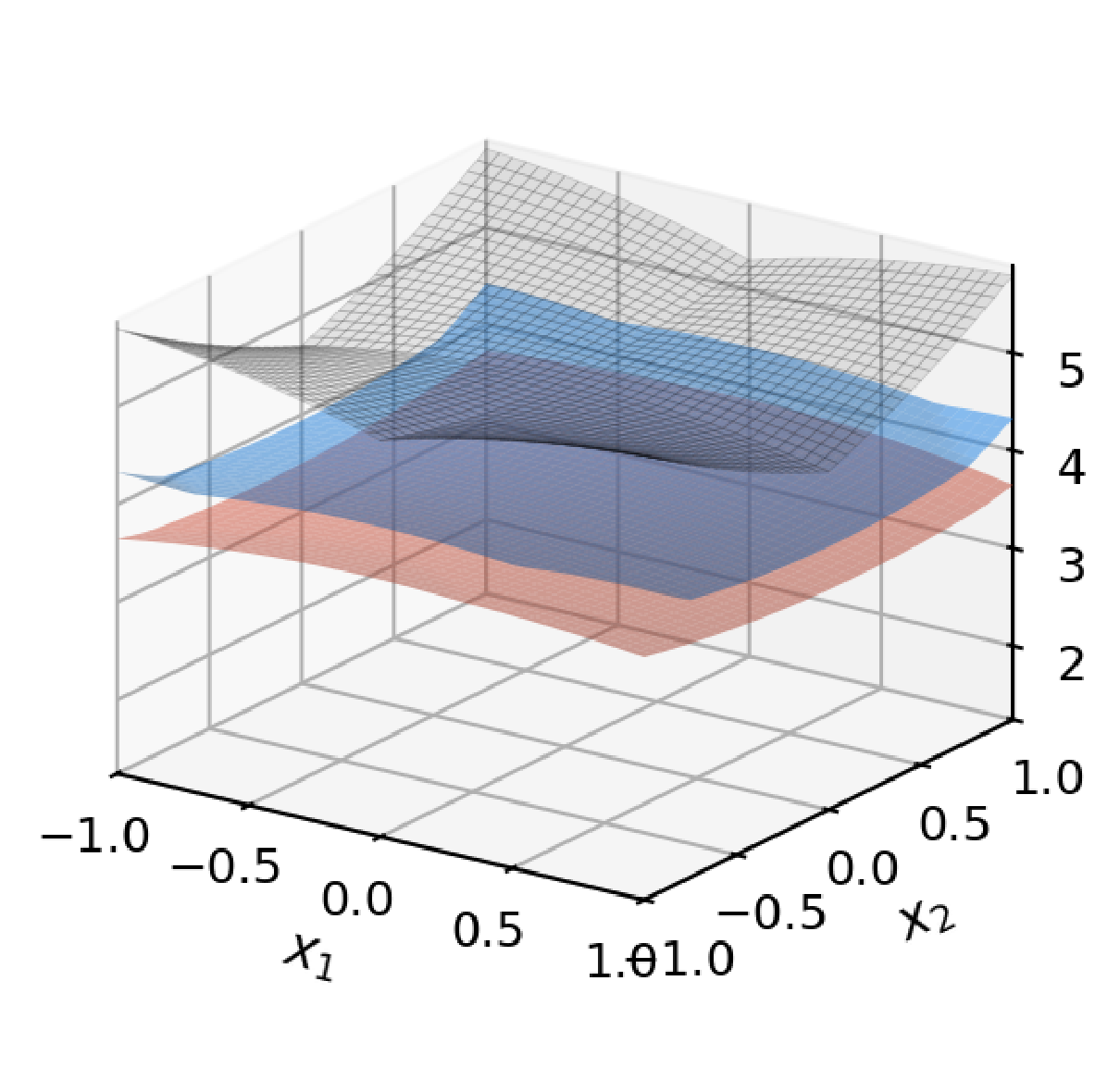} \\
								(a) $ {N}=20$ & (b) $ {N}=50$ & (c) $ {N}=100$ & (d) $ {N}=200$
							\end{tabular}
							\caption{Model surfaces at fixed $m=5$ with increasing sample size $ {N}$. In each panel, the gray surface denotes  $V$, the blue surface denotes  $V_{ {N}}$, and the red surface refers to $\overline V_{ {N}}$.}
							\label{fig:toy-surfaces-fixedm}
						\end{figure}
						
						\paragraph{Approximation errors.}
						Figure~\ref{fig:toy-dimsweepN} reports two diagnostics. In Figure~\ref{fig:toy-dimsweepN}(a), we compare the 
						relative reference value gaps attained by the minimizers of the two sampled models. Specifically,  
						we report
						$
						\frac{v^*-v_N^*}{|v^*|}
						$  for the majorant stochastic approximation and   $
						\frac{v^*-\overline v_N^*}{|v^*|} 
						$  for the standard stochastic approximation.
						Specifically, we use
						$
						m\in\{2,3,5\}, \text{ and }
						N\in\{2,10,20,\ldots,200\},
						$
						and average the results over \(60\) independent sample streams.
						Across $m\in\{2,3,5\}$, the majorant approximation yields substantially smaller relative errors than the standard approximation.
						
						Figure~\ref{fig:toy-dimsweepN}(b) sets \(m=2\) and examines the
						optimal-value gaps as functions of the coverage radius.
						Instead of using a random sample set, we construct a deterministic midpoint
						tensor cover. For
						$
						k\in\{6,8,10,12,16,20,24,32,40\},
						$
						the sample set \(Y_N\) consists of the \(N=k^2\) cell midpoints of the
						uniform \(k\times k\) partition of \([-\pi,\pi]^2\). Its coverage radius is
						therefore
						$
						\beta_N
						=
						\frac{\sqrt{2}}{2}\cdot\frac{2\pi}{k}
						=
						\frac{\sqrt{2}\pi}{\sqrt N}.
						$
						We measure the optimal-value gaps by
						$ v^*-\overline v_N^*,
					  v^*-v_N^*,
					$
						for the standard and majorant stochastic approximations, respectively.
						To quantify the empirical dependence on the coverage radius, we overlay two
						dashed least-squares fitted lines in the log--log scale, i.e.,
						$ a+s\log \beta_N .
						$
						The fitted slopes are \(s=1.24\) for the standard approximation and
						\(s=2.01\) for the majorant approximation. Thus, the observed value gaps
						approximately scale as
					$
						v^*-\overline v_N^*\propto \beta_N^{1.24}, \text{ and }
						v^*-v_N^*\propto \beta_N^{2.01},
					$
						which is consistent with the first- and second-order coverage-radius
						dependence predicted by Theorem~\ref{thm:value solution-gap}.

						\begin{figure}[htbp]
							\centering
							\begin{tabular}{@{}cc@{}}
								\includegraphics[width=0.40\linewidth]{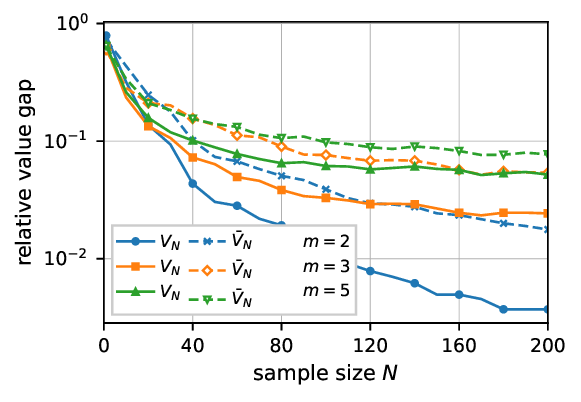} &
								\includegraphics[width=0.40\linewidth]{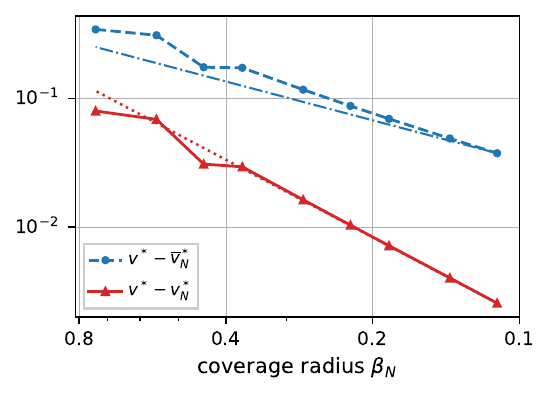} 
							\end{tabular}
							\caption{Approximation errors for the toy example. Panel (a) reports the
								relative optimal-value gaps attained by the minimizers of the sampled models.
								Panel (b) reports the log--log plot of the optimal-value gaps
								\(v^*-\overline v_N^*\) and \(v^*-v_N^*\) versus the coverage radius \(\beta_N\).}
							\label{fig:toy-dimsweepN}
						\end{figure}

						We next evaluate the  SMSA method for solving the majorant stochastic
						approximation model for problem~\eqref{eq:toy_family}. We run
						Algorithm~\ref{alg:growing-N} with   \(\overline{N}=1000\),
						initialized at \(x^{(0)}=(-0.5,0.5)^\top\). The sample size is updated by
						$
						N_0=5, N_{k+1}=\min\{N_k+5,\overline{N}\},
						$
						so that the algorithm reaches the cap after \(200\) outer stages. The parameters are set to  
						$
						\epsilon_k=\max\{10^{-8},10^{-1}(0.9)^k\},
						\mu_k=\frac{\epsilon_k}{2\log(N_k)} .
						$
						The inner projected-gradient loop terminates when
						$
						r_k(z^{(t+1)})\le \epsilon_k$ or  $
						r_k(z^{(t+1)})\le 10^{-4}r_k(x^{(k)}).
						$
						For each $m\in\{2,3,5\}$, we conduct $50$ independent replications. In each replication, a nested family of sample sets is generated from a single i.i.d. sample stream as in~\eqref{eq:ynk}, and Algorithm~\ref{alg:growing-N} is then applied. 
						  Figure~\ref{fig:toy-growing} reports the empirical averages of the smoothed stationarity residual $r_k(  x^{(k)})$ and the approximation error $V( x^{(k)})-v^*$, respectively, where \(x^{(k)}\) denotes the stage-end
						 iterate returned by the \((k-1)\)-th inner solve. Both quantities decrease as $N_k$ grows. 

						\begin{figure}[htbp]
							\centering
							\begin{tabular}{@{}cc@{}}
								\includegraphics[width=0.40\linewidth]{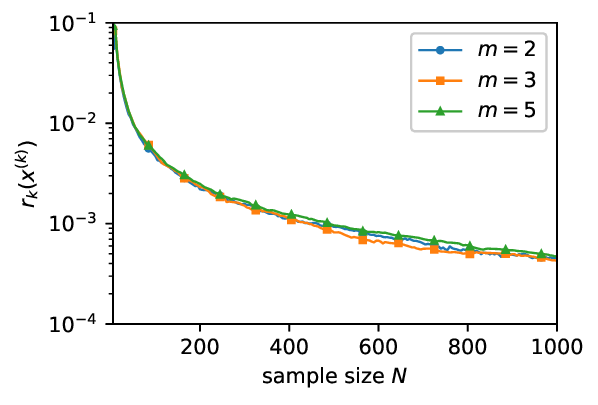} &
								\includegraphics[width=0.40\linewidth]{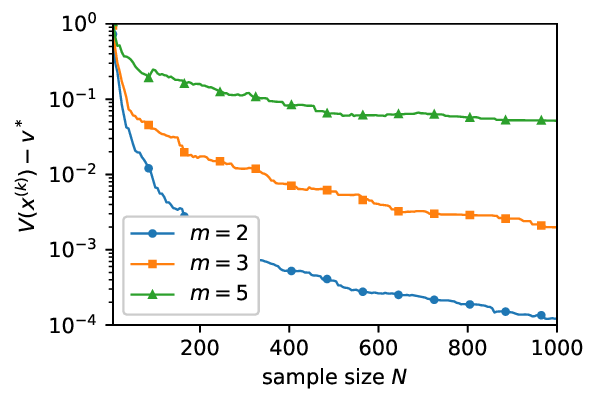} \\
								(a) smoothed stationarity residual & (b) approximation error
							\end{tabular}
							\caption{Performance of the SMSA method with growing samples.}
							\label{fig:toy-growing}
						\end{figure}
						
						\subsection{Robust logistic regression}\label{subsec:rlr}
						We next evaluate the proposed majorant approximation on robust logistic
						regression problems with structured feature perturbations. The experiments
						use two benchmark UCI datasets, Wisconsin Breast Cancer (BC) and Parkinsons
						(PD) \citep{bache2013uci}. The BC dataset contains
						$569$ tumor observations with $d=30$
						real-valued diagnostic features. The PD dataset contains
						$195$ voice-recording observations from 31 subjects,
						with $d=22$ biomedical voice features.
						All reported entries are averaged over the fixed data splits described in
						Appendix~\ref{app:rlr-details}.  
						
						  Let $\{(u_i^{\rm tr},v_i^{\rm tr})\}_{i=1}^{n_{\rm tr}}$
						 denote the training data, where
						 $u_i^{\rm tr}\in\mathbb R^{d}$ is the feature vector and
						 $v_i^{\rm tr}\in\{-1,1\}$ is the class label. We partition the feature
						 coordinates $\{1,\ldots,d\}$ into $m=5$ disjoint groups
						 $\{G_1,\ldots,G_m\}$. For a random perturbation vector
						 $y=(y_1,\ldots,y_m)\in Y_r:=[-r,r]^m$, the perturbed feature
						 $\widetilde u_i^{\rm tr}(y)\in\mathbb R^{d}$ is constructed componentwise by
						 \[
						 \bigl(\widetilde u_i^{\rm tr}(y)\bigr)_k
						 =
						 (1+y_j)(u_i^{\rm tr})_k,
						 \qquad k\in G_j,\quad j=1,\ldots,m .
						 \]
						 Thus, all coordinates in the same group are scaled by the same  
						 perturbation.
						 For  BC dataset,  $
						 \cX=[-2,2]^{30},
						 $
						 and for  PD dataset, $
						 \cX=[-1,1]^{22}.
						 $ The robust training problem
						is 
						\begin{equation}\label{eq:rlr-minimax}
							\min_{x\in \cX}\left\{V(x):=\max_{y\in\cY_{r}} f(x,y)\right\},\quad\text{where}\quad
							f(x,y)=\frac{1}{n_{\rm tr}}\sum_{i=1}^{n_{\rm tr}}\log\left(1+\exp\left(-v_i^{\rm tr}(\widetilde u_i^{\rm tr}(y))^\top x\right)\right).
						\end{equation}
						
						We evaluate the performances using two robust test criteria. Denote
						\(\{(u_i^{\rm te},v_i^{\rm te})\}_{i=1}^{n_{\rm te}}\) as the test set, and \(\widetilde u_i^{\rm te}(y)\)  as the 
					   perturbed test set.  
						The first criterion is the robust test loss
						\begin{equation}\label{eq:rlr-robust-loss}
							L_{\rm rob}(x)
							:=
							\max_{y\in\cY_r}
							\frac{1}{n_{\rm te}}
							\sum_{i=1}^{n_{\rm te}}
							\ell_i^{\rm te}(x,y),
						\end{equation}
						where $
						\ell_i^{\rm te}(x,y)
						:=
						\log\left(
						1+\exp\left(
						-v_i^{\rm te}
						 (\widetilde u_i^{\rm te}(y) )^\top x
						\right)
						\right).
						$
						The second criterion is the conditional value-at-risk (CVaR) of the samplewise
						worst-case losses \citep{rockafellar2000optimization}. For each test sample
						\(i\), define
						$
						L_i^{\rm wc}(x):=
						\max_{y\in\cY_r}\ell_i^{\rm te}(x,y).
						$
						For \(\alpha=0.9\), the CVaR loss is
						\begin{equation}\label{eq:rlr-cvar-loss}
							\operatorname{CVaR}_{\alpha}(x)
							:=
							\min_{\tau\in\RR}
							\left\{
							\tau
							+
							\frac{1}{(1-\alpha)n_{\rm te}}
							\sum_{i=1}^{n_{\rm te}}
							\bigl(L_i^{\rm wc}(x)-\tau\bigr)_+
							\right\}.
						\end{equation}
						This metric summarizes the upper tail of the samplewise adversarial losses.
						The exact numerical evaluation of \(L_{\rm rob}(x)\) and
						\(\operatorname{CVaR}_{0.9}(x)\) over the continuous uncertainty set \(\cY_r\) is described in Appendix~\ref{app:rlr-details}.
						
						We next describe how Algorithm~\ref{alg:growing-N} is instantiated in these
						experiments. For each dataset, data split, and perturbation radius
						$
						r\in\{0.15,0.25,0.35\},
						$
						we first generate 
						$
						Y_{\overline{N}}:=\{y^{(1)},\ldots,y^{(\overline{N})}\},
						$ 
						with the sample size
						$
						\overline{N}\in\{16,64,256,1024\}.
						$
						Here, \(y^{(j)}\sim \mathrm{Unif}(\mathcal Y_r)\) for all $j=1,2,\ldots,\overline{N}$.
						For a row of Table~\ref{tab:rlr-results} with terminal cap \(\overline{N}\), we run
						Algorithm~\ref{alg:growing-N} with 
						$
						x^{(0)}=0,
						$
						\(N_0=2\), and 
						$
						N_k=4^k
						$
						for all $k\ge1$.
						The majorant parameter \(\widehat L\) used in \(q\) is specified in
						Appendix~\ref{app:rlr-details}. At stage \(k\), we set
						\mbox{$
						\epsilon_k^{\rm tol}=10^{-1}(0.9)^k, \text{ and }
						\mu_k=\frac{\epsilon_k^{\rm tol}}{2\log N_k}.
						$}
						Each stage is given a maximum of \(600\) projected-gradient
						updates for  BC dataset and \(500\) projected-gradient updates for  PD dataset.
						To obtain the  ERM reference, we solve
						$
						x_{\rm ERM}
						\in
						\arg\min_{x\in\cX} f(x,0).
						$ 
						For a fair comparison, the standard sampled approximation
					$
						\overline V_{\overline N}(x)
					$
						is solved by the same LSE-smoothing/projected-gradient framework as the
						majorant model, with \(q(x,y_i)\) replaced by \(f(x,y_i)\). Unless stated
						otherwise, we use the same initialization, sample schedule, smoothing schedule,
						tolerance schedule, and stagewise iteration budget for both models.

					 Table~\ref{tab:rlr-results} compares   \(V_{\overline{N}}\),  \(\overline V_{\overline{N}}\), and the ERM reference. Across  \(\overline{N}\in\{16,64,256,1024\}\), perturbation radii, and datasets, the
					majorant approximation attains a lower robust test loss than the standard
					sampled approximation. The improvement is most pronounced when the terminal
					perturbation bank is still relatively small and the perturbation radius is large.
					Over the reported values of \(\overline{N}\), the relative reduction in robust test
					loss, measured against the standard sampled approximation, ranges from \(0.4\%\)
					to \(17.2\%\) on BC and from \(16.3\%\) to \(31.3\%\) on  PD dataset.
					The largest robust-loss gains occur at \(\overline{N}=16\) and \(r=0.35\) on both
					datasets. For example, at \(\overline{N}=16\) on BC, the robust-loss
					reduction increases from \(0.4\%\) at \(r=0.15\) to \(5.7\%\) at \(r=0.25\) and
					\(17.2\%\) at \(r=0.35\); on  PD dataset, the corresponding reductions are
					\(21.2\%\), \(28.8\%\), and \(31.3\%\). For \(r=0.25\) and \(r=0.35\), the gap
					generally narrows as \(\overline{N}\) increases, but the majorant approximation
					continues to dominate the standard sampled approximation in all reported
					settings.
					
					The tail-risk results are consistent with this picture. For the reported values of
					\(\overline{N}\), the reduction in \(\operatorname{CVaR}_{0.9}\), again measured against the
					standard sampled approximation, ranges from \(13.9\%\) to \(32.1\%\) on  BC dataset and from \(4.1\%\) to \(13.1\%\) on   PD dataset. On  BC dataset, the
					CVaR improvement is especially large for \(r=0.25\) and \(r=0.35\), remaining
					above \(26.8\%\) across all reported terminal caps. On  PD dataset, the CVaR
					gains are smaller but positive in every setting, with the largest reduction
					\(13.1\%\) attained at \(r=0.15\) and \(\overline{N}=16\). This improvement is
					meaningful because \(\operatorname{CVaR}_{0.9}\) captures the largest adversarial test
					losses and is therefore sensitive to hard samples near the decision boundary.

						\begin{table*}[t]
							\centering
							\scriptsize
							\setlength{\tabcolsep}{2.4pt}
							\renewcommand{\arraystretch}{1.08}
							\caption{Robust logistic regression results under different perturbation radii.
								For each dataset, we report the robust test loss and $\operatorname{CVaR}_{0.9}$.
								Boldface indicates the better value between
								$V_{\overline{N}}$ and $\overline V_{\overline{N}}$ for the same $(r,\overline{N})$.}
							\label{tab:rlr-results}
							\resizebox{\textwidth}{!}{%
								\begin{tabular}{@{}cc|ccc|ccc|ccc|ccc@{}}
									\toprule
									\multicolumn{2}{c}{\multirow{2}{*}{Setting}}
									& \multicolumn{6}{c}{Breast Cancer}
									& \multicolumn{6}{c}{Parkinsons} \\
									\cmidrule(lr){3-8}\cmidrule(l){9-14}
									\multicolumn{2}{c}{}
									& \multicolumn{3}{c}{Robust loss}
									& \multicolumn{3}{c}{$\operatorname{CVaR}_{0.9}$}
									& \multicolumn{3}{c}{Robust loss}
									& \multicolumn{3}{c}{$\operatorname{CVaR}_{0.9}$} \\
									\cmidrule(lr){1-2}
									\cmidrule(lr){3-5}\cmidrule(lr){6-8}
									\cmidrule(lr){9-11}\cmidrule(l){12-14}
									$r$ & $\overline{N}$
									& ERM & $V_{\overline{N}}$ & $\overline V_{\overline{N}}$
									& ERM & $V_{\overline{N}}$ & $\overline V_{\overline{N}}$
									& ERM & $V_{\overline{N}}$ & $\overline V_{\overline{N}}$
									& ERM & $V_{\overline{N}}$ & $\overline V_{\overline{N}}$ \\
									\midrule
									
									\multirow{4}{*}{$0.15$}
									& $16$
									&  \multirow{4}{*}{0.3881} & \textbf{0.3131} & 0.3145
									& \multirow{4}{*}{3.8638} & \textbf{1.9523} & 2.3008
									& \multirow{4}{*}{0.5995} & \textbf{0.4118} & 0.5223
									& \multirow{4}{*}{2.4956}  & \textbf{1.1154} & 1.2841 \\
									& $64$
									& & \textbf{0.2973} & 0.3016
									& & \textbf{1.9659} & 2.2837
									& & \textbf{0.4116} & 0.5079
									& & \textbf{1.1143} & 1.2534 \\
									& $256$
									& & \textbf{0.2690} & 0.2849
									& & \textbf{1.8485} & 2.2004
									& & \textbf{0.4084} & 0.4939
									& & \textbf{1.1066} & 1.2235 \\
									& $1024$
									& & \textbf{0.2570} & 0.2760
									& & \textbf{1.7972} & 2.1567
									& & \textbf{0.4071} & 0.4864
									& & \textbf{1.1131} & 1.2134 \\
									\midrule
									
									\multirow{4}{*}{$0.25$}
									& $16$
									& \multirow{4}{*}{1.0041}  & \textbf{0.5128} & 0.5437
									& \multirow{4}{*}{6.8129} & \textbf{2.2184} & 3.1990
									& \multirow{4}{*}{0.8139}  & \textbf{0.4542} & 0.6375
									&\multirow{4}{*}{3.2201}   & \textbf{1.3932} & 1.5943 \\
									& $64$
									& & \textbf{0.4739} & 0.4969
									& & \textbf{2.1748} & 3.0890
									& & \textbf{0.4590} & 0.6031
									& & \textbf{1.4140} & 1.5757 \\
									& $256$
									& & \textbf{0.3965} & 0.4070
									& & \textbf{1.9037} & 2.6904
									& & \textbf{0.4406} & 0.5727
									& & \textbf{1.3673} & 1.4769 \\
									& $1024$
									& & \textbf{0.3673} & 0.3738
									& & \textbf{1.8082} & 2.5542
									& & \textbf{0.4381} & 0.5586
									& & \textbf{1.3700} & 1.4476 \\
									\midrule
									
									\multirow{4}{*}{$0.35$}
									& $16$
									& \multirow{4}{*}{1.9810}  & \textbf{0.6905} & 0.8338
									& \multirow{4}{*}{9.8408}  & \textbf{2.7155} & 4.0014
									& \multirow{4}{*}{1.1747} & \textbf{0.4803} & 0.6987
									& \multirow{4}{*}{3.9695} & \textbf{1.5455} & 1.6871 \\
									& $64$
									& & \textbf{0.5919} & 0.6491
									& & \textbf{2.4738} & 3.4103
									& & \textbf{0.4795} & 0.6478
									& & \textbf{1.5876} & 1.6550 \\
									& $256$
									& & \textbf{0.4504} & 0.4758
									& & \textbf{1.9891} & 2.7477
									& & \textbf{0.4474} & 0.5923
									& & \textbf{1.3384} & 1.4884 \\
									& $1024$
									& & \textbf{0.4264} & 0.4354
									& & \textbf{1.8684} & 2.5533
									& & \textbf{0.4557} & 0.5700
									& & \textbf{1.2841} & 1.4412 \\
									\bottomrule
								\end{tabular}%
							}
						\end{table*}

						\section{Conclusion}\label{sec:conclu}

						In this paper, we propose a sequential smoothing majorant stochastic approximation method for solving nonconvex--nonconcave minimax optimization problems. Our method leverages the smoothness structure of the objective function to construct a  majorant stochastic approximation. We show theoretically that this approach significantly improves the approximation accuracy of the standard stochastic approximation   from a first-order rate \(O(\beta_N)\) to a second-order rate \(O(\beta_N^2)\).   We also establish nonasymptotic bounds for optimal values and solution sets, along with subsequential consistency for 
						Clarke stationary points. Numerical experiments   illustrate the advantages of the proposed framework.

						\bibliographystyle{informs2014} 
						\bibliography{sample} 

@misc{bache2013uci,
	title={{UCI} Machine Learning Repository},
	author={K. Bache and M. Lichman},
	year={2013}
}

@article{rockafellar2000optimization,
	title={Optimization of Conditional Value-at-Risk},
	author={R. T. Rockafellar and S. Uryasev},
	journal={The Journal of Risk},
	volume={2},
	pages={21--41},
	year={2000}
}

@article{Pfetsch2023ResilientSystems,
	author = {Pfetsch, Marc E. and Schmitt, Andreas},
	title = {A Generic Optimization Framework for Resilient Systems},
	journal = {Optimization Methods and Software},
	year = {2023},
	volume = {38},
	pages = {356--385}
}

@article{Bodur2022TwoStageLDR,
	author = {Bodur, Merve and Luedtke, James R.},
	title = {Two-stage Linear Decision Rules for Multi-stage Stochastic Programming},
	journal = {Mathematical Programming},
	year = {2022},
	volume = {191},
	pages = {347--380}
}

@article{Rahimian2022DROFrameworks,
	title={Frameworks and results in distributionally robust optimization},
	author={Rahimian, Hamed and Mehrotra, Sanjay},
	journal={Open Journal of Mathematical Optimization},
	volume={3},
	pages={1--85},
	year={2022}
}

@article{Shehadeh2023MobileFacilityDRO,
	author = {Shehadeh, Karmel S.},
	title = {Distributionally Robust Optimization Approaches for a Stochastic Mobile Facility Fleet Sizing, Routing, and Scheduling Problem},
	journal = {Transportation Science},
	year = {2023},
	volume = {57},
	pages = {197--229}, 
}

@article{Gao2023WassersteinDRSO,
	author = {Gao, Rui and Kleywegt, Anton J.},
	title = {Distributionally Robust Stochastic Optimization with {Wasserstein} Distance},
	journal = {Mathematics of Operations Research},
	year = {2023},
	volume = {48},
	pages = {603--655}
}

@article{Gurvich2010CallCenterChance,
	author = {Gurvich, Itai and Luedtke, James and Tezcan, Tolga},
	title = {Staffing Call Centers with Uncertain Demand Forecasts: A Chance-Constrained Optimization Approach},
	journal = {Management Science},
	year = {2010},
	volume = {56},
	pages = {1093--1115}
}

@article{Karimi2021CCLMI,
	author = {Karimi, Roya and Cheng, Jianqiang and Lejeune, Miguel A.},
	title = {A Framework for Solving Chance-Constrained Linear Matrix Inequality Programs},
	journal = {INFORMS Journal on Computing},
	year = {2021},
	volume = {33},
	pages = {1015--1036}
}

@article{Porras2023TightCompactSAA,
	author = {Porras, Alvaro and Dominguez, Concepcion and Morales, Juan Miguel and Pineda, Salvador},
	title = {Tight and Compact Sample Average Approximation for Joint Chance-Constrained Problems with Applications to Optimal Power Flow},
	journal = {INFORMS Journal on Computing},
	year = {2023},
	volume = {35},
	pages = {1454--1469}
}

@article{HoNguyen2023StrongLHS,
	author = {Ho-Nguyen, Nam and Kilinc-Karzan, Fatma and Kucukyavuz, Simge and Lee, Dabeen},
	title = {Strong Formulations for Distributionally Robust Chance-Constrained Programs with Left-Hand Side Uncertainty Under {Wasserstein} Ambiguity},
	journal = {INFORMS Journal on Optimization},
	year = {2023},
	volume = {5},
	pages = {211--232}
}

@article{KilincKarzan2022JointChanceSubmod,
	author = {Kilinc-Karzan, Fatma and Kucukyavuz, Simge and Lee, Dabeen},
	title = {Joint Chance-Constrained Programs and the Intersection of Mixing Sets through a Submodularity Lens},
	journal = {Mathematical Programming},
	year = {2022},
	volume = {195},
	pages = {283--326}
}

@article{LuedtkeAhmed2008SampleApprox,
	author = {Luedtke, James and Ahmed, Shabbir},
	title = {A Sample Approximation Approach for Optimization with Probabilistic Constraints},
	journal = {SIAM Journal on Optimization},
	year = {2008},
	volume = {19},
	pages = {674--699}
}

@article{Rahimian2019EffectiveScenarios,
	author = {Rahimian, Hamed and Bayraksan, Guzin and Homem-de-Mello, Tito},
	title = {Identifying Effective Scenarios in Distributionally Robust Stochastic Programs with Total Variation Distance},
	journal = {Mathematical Programming},
	year = {2019},
	volume = {173},
	pages = {393--430},
}

@misc{goodfellow2014explaining,
	title={Explaining and harnessing adversarial examples},
	author={Goodfellow, Ian J and Shlens, Jonathon and Szegedy, Christian},
	howpublished={Preprint, arXiv:1412.6572},
	year={2014}
}

@misc{rahimian2019distributionally,
	title={Distributionally robust optimization: A review},
	author={H. Rahimian and S. Mehrotra},
	howpublished={Preprint, arXiv:1908.05659},
	year={2019}
}

@article{kuhn2024distributionallyrobustoptimization,
	title={Distributionally robust optimization},
	author={D. Kuhn and S. Shafiee and W. Wiesemann},
	journal={Acta Numerica},
	volume={34},
	pages={579--804},
	year={2025},
	publisher={Cambridge University Press}
}

@inproceedings{
	madry2018towards,
	title={Towards Deep Learning Models Resistant to Adversarial Attacks},
	author={A. Madry and A. Makelov and L. Schmidt and D. Tsipras and A. Vladu},
	booktitle={International Conference on Learning Representations},
	year={2018},
}

@misc{huang2015learning,
	title={Learning with a strong adversary},
	author={R. Huang and B. Xu and D. Schuurmans and C. Szepesv{\'a}ri},
	howpublished={Preprint, arXiv:1511.03034},
	year={2015}
}

@article{Noyan2022DecisionDependentDRO,
	author = {Noyan, Nilay and Rudolf, Gabor and Lejeune, Miguel},
	title = {Distributionally Robust Optimization Under a Decision-Dependent Ambiguity Set with Applications to Machine Scheduling and Humanitarian Logistics},
	journal = {INFORMS Journal on Computing},
	year = {2022},
	volume = {34},
	pages = {729--751}
}

@inproceedings{liao2024error,
	title={Error bounds, {PL} condition, and quadratic growth for weakly convex functions, and linear convergences of proximal point methods},
	author={F. Liao and L. Ding and Y. Zheng},
	booktitle={6th Annual Learning for Dynamics \& Control Conference},
	pages={993--1005},
	year={2024},
	organization={PMLR}
}

@article{iusem2010distances,
	title={Distances between closed convex cones: old and new results},
	author={A. Iusem and A. Seeger},
	journal={Journal of Convex Analysis},
	volume={17},
	pages={1033--1055},
	year={2010}
}

@article{liu2014quantitative,
	title={Quantitative stability analysis of stochastic generalized equations},
	author={Y. Liu and W. Romisch and H. Xu},
	journal={SIAM Journal on Optimization},
	volume={24},
	pages={467--497},
	year={2014},
	publisher={SIAM}
}

@article{jiang2023optimality,
	title={Optimality conditions for nonsmooth nonconvex-nonconcave min-max problems and generative adversarial networks},
	author={J. Jiang and X. Chen},
	journal={SIAM Journal on Mathematics of Data Science},
	volume={5},
	pages={693--722},
	year={2023},
	publisher={SIAM}
}

@article{chen2024minmax,
 	title={Robust solutions of nonlinear least squares problems via min-max optimization},
 	author={Chen, Xiaojun and Kelley, CT},
 	journal={IMA Journal of Numerical Analysis},
 	volume={46},
 	pages={1356--1382},
 	year={2026},
 	publisher={Oxford University Press}
 }

@inproceedings{lin2023gradient,
	title={On gradient descent ascent for nonconvex-concave minimax problems}, 
	author={T. Lin and C. Jin and M. I. Jordan},
	booktitle={International Conference on Machine Learning},
	pages={6083--6093},
	year={2020},
	publisher={PMLR}
}

@article{Neumann1928ZurTD,
	title={Zur theorie der gesellschaftsspiele},
	author={J. von Neumann},
	journal={Mathematische Annalen},
	year={1928},
	volume={100},
	pages={295--320},
}

@article{Nemirovski2004ProxMethodWR,
	title={Prox-method with rate of convergence $O(1/t)$ for variational inequalities with {Lipschitz} continuous monotone operators and smooth convex-concave saddle point problems},
	author={A. Nemirovski},
	journal={SIAM Journal on Optimization},
	year={2004},
	volume={15},
	pages={229--251},
}

@article{Sion1958OnGM,
	title={On general minimax theorems},
	author={Sion, Maurice},
	journal={Pacific Journal of Mathematics},
	volume={8},
	pages={171--176},
	year={1958},
	publisher={Mathematical Sciences Publishers}
}

@article{xu2023unified,
	title={A unified single-loop alternating gradient projection algorithm for nonconvex--concave and convex--nonconcave minimax problems},
	author={Z. Xu and H. Zhang and Y. Xu and G. Lan},
	journal={Mathematical Programming},
	volume={201},
	pages={635--706},
	year={2023},
	publisher={Springer}
}

@article{danskin1966theory,
	title={The theory of max-min, with applications},
	author={J. M. Danskin},
	journal={SIAM Journal on Applied Mathematics},
	volume={14},
	pages={641--664},
	year={1966},
	publisher={SIAM}
}

@book{Berge1963,
	author={C. Berge},
	title={Topological Spaces},
	publisher={Oliver \& Boyd},
	address={Edinburgh},
	year={1963}
}

@article{xu2018distributionally,
	title={Distributionally robust optimization with matrix moment constraints: Lagrange duality and cutting plane methods},
	author={H. Xu and Y. Liu and H. Sun},
	journal={Mathematical Programming},
	volume={169},
	pages={489--529},
	year={2018},
	publisher={Springer}
}

@article{calafiore2005uncertain,
	title={Uncertain convex programs: randomized solutions and confidence levels},
	author={G. Calafiore and M. C. Campi},
	journal={Mathematical Programming},
	volume={102},
	pages={25--46},
	year={2005},
	publisher={Springer}
}

@article{anderson2014confidence,
	title={Confidence levels for {CVaR} risk measures and minimax limits},
	author={E. Anderson and H. Xu and D. Zhang},
	journal={Mathematical Programming},
	volume={180},
	pages={327--370},
	year={2020},
	publisher={Springer}
}

@article{jiang2023pure,
	title={Pure characteristics demand models and distributionally robust mathematical programs with stochastic complementarity constraints},
	author={J. Jiang and X. Chen},
	journal={Mathematical Programming},
	volume={198},
	pages={1449--1484},
	year={2023},
	publisher={Springer}
}

@article{li2022nonsmooth,
	title={Nonsmooth nonconvex–nonconcave minimax optimization: Primal–dual balancing and iteration complexity analysis},
	author={J. Li and L. Zhu and A. M.-C. So},
	journal={Mathematical Programming},
	volume={214},
	pages={591--641},
	year={2025},
	publisher={Springer}
}

@book{clarke1990optimization,
	title={{Optimization and Nonsmooth Analysis}},
	author={F. H. Clarke},
	year={1990},
	publisher={SIAM},
	address={Philadelphia}
}

@article{chen2012smoothing,
	title={Smoothing methods for nonsmooth, nonconvex minimization},
	author={X. Chen},
	journal={Mathematical Programming},
	volume={134},
	pages={71--99},
	year={2012},
	publisher={Springer}
}

@book{shapiro2009lectures,
	author={A. Shapiro and D. Dentcheva and A. Ruszczy{\'n}ski},
	title={Lectures on Stochastic Programming: Modeling and Theory},
	publisher={SIAM},
	address={Philadelphia},
	year={2009}
}

@book{roc1998var,
	title={Variational Analysis},
	author={R. T. Rockafellar and  R. J.-B. Wets},
	year={1998},
	publisher={Springer},
	address = {Berlin, Heidelberg}
}

@book{billingsley1995probability,
	author={P. Billingsley},
	title={Probability and Measure},
	edition={3rd},
	publisher={Wiley},
	address={New York},
	year={1995}
}

@misc{liu2025stochasticsmoothingframeworknonconvexnonconcave,
	title={A stochastic smoothing framework for nonconvex-nonconcave min{E}max problems with applications to {Wasserstein} distributionally robust optimization}, 
	author={W. Liu and M. Khan and G. Mancino-Ball and Y. Xu},
	year={2025},
	howpublished={Preprint, arXiv:2502.17602}
}

@article{xu2024derivativefreeminMax,
	title={Derivative-free alternating projection algorithms for general nonconvex-concave minimax problems}, 
	author={Z. Xu and Z. Wang and J. Shen and Y. Dai},
	journal={SIAM Journal on Optimization},
	year={2024},
	volume={34},
	pages={1879--1908}
}

@article{burke2020subdifferential,
	title={The subdifferential of measurable composite max integrands and smoothing approximation},
	author={J. V. Burke and X. Chen and H. Sun},
	journal={Mathematical Programming},
	volume={181},
	pages={229--264},
	year={2020},
	publisher={Springer}
}

@article{chen2014optimal,
	title={Optimal primal-dual methods for a class of saddle point problems},
	author={Y. Chen and G. Lan and Y. Ouyang},
	journal={SIAM Journal on Optimization},
	volume={24},
	pages={1779--1814},
	year={2014},
	publisher={SIAM}
}

@article{xu2024decentralized,
	title={Decentralized gradient descent maximization method for composite nonconvex strongly-concave minimax problems},
	author={Y. Xu},
	journal={SIAM Journal on Optimization},
	volume={34},
	pages={1006--1044},
	year={2024},
	publisher={SIAM}
}
						
						
						
						
						\newpage
						
						\begin{APPENDICES}
							\AppendixHyperFix
							
							\section{Proofs of the Main Results in Section~\ref{sec:2}}\label{app:A}

							\subsection{Proof of Theorem~\ref{thm:value solution-gap}}\label{appen:1}
							
							\begin{proof}{Proof}
								We first establish a pointwise bound for \(V(x)-V_N(x)\). For a fixed $y\in\cY$, we define the local approximation
								\[
								m_{\widehat{L}}(x,y;z):=
								\langle \nabla_y f(x,y),\,z-y\rangle-\frac{\widehat{L}}{2}\|z-y\|^2.
								\] Fix any \(x\in\cX\), and let
								$
								y^*\in \arg\max_{y\in\cY} f(x,y),
								$
								so that \(V(x)=f(x,y^*)\). By the definition of \(\beta_N\), there exists some \(j\in[N]\) such that
								$
								\|y^*-y_j\|\le \beta_N.
								$
								Since
								$
								V_N(x)=\max_{i\in[N]} q(x,y_i)
								$
								and, by definition of \(q\), 
								we have
								\[
								V_N(x)\ge q(x,y_j)\ge f(x,y_j)+m_{\widehat{L}}(x,y_j;y^*).
								\]
								Therefore,
								\begin{equation*}
									\begin{aligned}
										V(x)-V_N(x)
										&\le f(x,y^*)-f(x,y_j)-m_{\widehat{L}}(x,y_j;y^*) \\
										&= f(x,y^*)-f(x,y_j)-\langle \nabla_y f(x,y_j),\,y^*-y_j\rangle
										+\frac{\widehat{L}}{2}\|y^*-y_j\|^2.
									\end{aligned}
								\end{equation*}
								Using the \(L\)-smoothness of \(f(x,\cdot)\), we obtain
								\[
								f(x,y^*)
								\le
								f(x,y_j)+\langle \nabla_y f(x,y_j),\,y^*-y_j\rangle
								+\frac{L}{2}\|y^*-y_j\|^2.
								\]
								Substituting this estimate into the previous display yields
								\[
								V(x)-V_N(x)
								\le
								\frac{L+\widehat{L}}{2}\|y^*-y_j\|^2
								\le
								\widehat{L}\,\beta_N^2,
								\]
								where the last inequality uses \(\widehat{L}> L\). On the other hand, since \(q(x,y)\le V(x)\) for all
								\((x,y)\in \cX\times\cY\), it follows that \(V_N(x)\le V(x)\). Hence,
								\[
								0\le V(x)-V_N(x)\le \widehat{L}\,\beta_N^2,
								\qquad \forall x\in\cX.
								\]
								Taking the supremum over \(x\in\cX\), we obtain
								\[
								0\le \nu^*-\nu_N^*
								=
								\min_{x\in\cX}V(x)-\min_{x\in\cX}V_N(x)
								\le
								\sup_{x\in\cX}\big(V(x)-V_N(x)\big)
								\le
								\widehat{L}\,\beta_N^2.
								\]
								
								We next prove the second inequality in~\eqref{eq:value-gap-orders-new}. Fix any \(x_N^*\in\cX_N^*\). Since
								\(V_N(x_N^*)=\nu_N^*\) and \(\nu_N^*\le \nu^*\), the pointwise bound above gives
								\[
								V(x_N^*)-\nu^*
								\le
								V(x_N^*)-\nu_N^*
								=
								V(x_N^*)-V_N(x_N^*)
								\le
								\widehat{L}\,\beta_N^2.
								\]
								Moreover, by the definition of \(\cR\) in~\eqref{eq:R-def},
								$
								V(x_N^*)-\nu^*\ge \cR\big(\dist(x_N^*,\cX^*)\big).
								$
								Therefore,
								$
								\cR\big(\dist(x_N^*,\cX^*)\big)\le \widehat{L}\,\beta_N^2,
								$
								which implies
								$
								\dist(x_N^*,\cX^*)\le \cR^{-1}(\widehat{L}\,\beta_N^2).
								$
								Taking the supremum over \(x_N^*\in\cX_N^*\) completes the proof.
								\null\hfill$\square$\end{proof}

							\subsection{Proof of Proposition~\ref{thm:stationary_sets_asymp2}}\label{appen:3}
							\begin{proof}{Proof}
								We present the argument for the  case $x_N\in \cD_N$; the other case is identical after replacing
								$\partial V_N$ with $\partial \overline V_N$.
								For any $N$, since $x_N\in \cD_N$, there exist $g_N\in \partial V_N(x_N)$ and $\xi_N\in \cN_{\cX}(x_N)$ such that
								$g_N+\xi_N=0$. By Assumption~\ref{ass:smoothness} and the compactness of $\cX\times \cY$, we know that $\{x_N\}$ is bounded, $\{g_N = -\xi_N\}$ is bounded, and the normal cone mapping has a closed graph on $\cX$. Therefore,
								we may extract a subsequence (not relabeled) such that $\xi_N\to \xi\in \cN_{\cX}(x)$.
								Moreover, by the subdifferential consistency described above, we can extract a further subsequence along which
								$g_N\to g\in \partial V(x)$. Passing to the limit in $g_N+\xi_N=0$ yields $g+\xi=0$, hence
								$0\in \partial V(x)+\cN_{\cX}(x)$, which means $x\in \cD$.
								\null\hfill$\square$\end{proof}
							
							\subsection{Proofs of Lemma~\ref{lem:QG_f_to_QG_q_global} and Theorem~\ref{thm:main_betaN_global}}\label{appen:4}
							We first establish two technical lemmas used in the proofs of Lemma~\ref{lem:QG_f_to_QG_q_global} and Theorem~\ref{thm:main_betaN_global}.
							\begin{lemma}
								\label{lem:PL_to_QG_f_global}
								Under Assumption
								\ref{ass:PL_f_global}, the following quadratic growth  condition holds:
								\begin{equation}\label{eq:QG_f_global}
									V(x)-f(x,y)\ \ge\ \frac{\kappa_p(x)}{8}\,\dist^2\big(y,S_0(x)\big),
									\qquad \forall x\in\cX, y\in\cY,
								\end{equation}
								where $S_0(x) := \arg\max_{y\in \cY} f(x,y)$.
							\end{lemma}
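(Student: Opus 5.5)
We prove Lemma~\ref{lem:PL_to_QG_f_global} for a fixed $x\in\cX$. Write $g(y):=V(x)-f(x,y)\ge 0$ on $\cY$. The function $h:=g+\delta_{\cY}$ is $L$-weakly convex, since $f(x,\cdot)$ is $L$-smooth and $\cY$ is convex. Its limiting subdifferential is $\partial h(y)=-\nabla_y f(x,y)+\cN_{\cY}(y)$. Hence Assumption~\ref{ass:PL_f_global} is exactly a PL (Kurdyka--{\L}ojasiewicz with exponent $1/2$) inequality,
\[
\kappa_p(x)\,h(y)\le \dist^2(0,\partial h(y)),
\]
and $\arg\min h=S_0(x)$ with $\min h=0$. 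The plan is the standard PL$\Rightarrow$QG argument via a subgradient (or projected-gradient) flow. Its length is controlled by $\sqrt{h}$, which gives the error bound $\dist(y,S_0(x))\le \frac{2}{\sqrt{\kappa_p(x)}}\sqrt{h(y)}$. This is equivalent to $h(y)\ge \frac{\kappa_p(x)}{4}\dist^2(y,S_0(x))$, which is stronger than the claimed constant $1/8$, so there is slack.

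\textbf{Step 1: the descent curve.} Fix $y_0\in\cY$ with $h(y_0)>0$. To stay elementary, I would use the discrete projected-gradient iteration
\[
y_{k+1}=\Pi_{\cY}\big(y_k+\tfrac1{\widehat L}\nabla_y f(x,y_k)\big),
\]
which is well defined with step $1/\widehat L<1/L$. Since $V(x)$ is the global max, this step satisfies the sufficient-increase inequality
\[
f(x,y_{k+1})-f(x,y_k)\ge \tfrac{\widehat L}{2}\|y_{k+1}-y_k\|^2 .
\]
The proximal-gradient residual $G_k:=\widehat L(y_{k+1}-y_k)$ also gives a subgradient of $h$ at $y_{k+1}$: by the optimality condition of the projection,
\[
-\nabla_y f(x,y_{k+1})+\xi\in\partial h(y_{k+1}),\qquad \|{-\nabla_y f(x,y_{k+1})}+\xi\|\le (\widehat L+L)\|y_{k+1}-y_k\|.
\]
Combining this with the PL inequality at $y_{k+1}$ yields a linear rate
\[
h(y_{k+1})\le \rho\, h(y_k),\qquad \rho<1 .
\]

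\textbf{Step 2: bounding the path length.} The difficulty is that this discrete route gives constants depending on $\widehat L/\kappa_p$ rather than an absolute $1/8$. For the pure constant I would instead use the continuous route: the subgradient curve $\dot y(t)\in-\partial h(y(t))$ of minimal norm. It exists for weakly convex $h$ and satisfies
\[
\tfrac{d}{dt}h(y(t))=-\|\dot y\|^2 .
\]
Then
\[
\tfrac{d}{dt}\sqrt{h}=-\frac{\|\dot y\|^2}{2\sqrt h}\le-\frac{\sqrt{\kappa_p}}{2}\|\dot y\|,
\]
using $\|\dot y\|=\dist(0,\partial h)\ge\sqrt{\kappa_p h}$. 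Integrating shows that the path length is at most $\frac{2}{\sqrt{\kappa_p}}\sqrt{h(y_0)}$. The PL inequality also gives $h(y(t))\le e^{-\kappa_p t}h(y_0)\to 0$, so $y(t)$ converges (finite length) to a point $\bar y$ with $h(\bar y)=0$, i.e.\ $\bar y\in S_0(x)$. Therefore
\[
\dist(y_0,S_0(x))\le\|y_0-\bar y\|\le \frac{2}{\sqrt{\kappa_p(x)}}\sqrt{h(y_0)},
\]
and squaring gives \eqref{eq:QG_f_global} with constant $\kappa_p/4\ge\kappa_p/8$. Alternatively, one can cite the known equivalence of PL, error bound, and quadratic growth for weakly convex functions (Liao--Ding--Zheng, listed in the bibliography), which yields QG with constant $\kappa_p/8$ or better.

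\textbf{Main obstacle.} The hardest part is rigorously justifying the subgradient-flow existence and the chain rule $\frac{d}{dt}h=-\|\dot y\|^2$ for the constrained weakly convex $h$. I would handle this by invoking the theory of curves of maximal slope for $L$-weakly convex lower semicontinuous functions (equivalently, gradient flow of the convex function $h+\frac L2\|\cdot\|^2$ shifted), or simply by citing the weakly convex PL$\Leftrightarrow$QG result. The extra factor of 2 in the stated constant $1/8$ leaves room for a discrete-approximation argument in place of the exact flow if needed.
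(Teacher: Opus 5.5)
Your proof is correct. Its reduction is the same as the paper's: the paper also sets $\phi_x=-f(x,\cdot)+\iota_{\cY}$, notes that $\phi_x$ is $L$-weakly convex with $\partial\phi_x(y)=-\nabla_y f(x,y)+\cN_{\cY}(y)$, and reads Assumption~\ref{ass:PL_f_global} as the PL inequality for $\phi_x$. It then simply invokes Theorem~3.1 of \cite{liao2024error}, which is your fallback, to get the modulus $\kappa_p(x)/8$.

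Where you differ is that you actually prove PL$\Rightarrow$QG, using the length of the minimal-norm subgradient curve. That computation is right. It gains a factor of two, $V(x)-f(x,y)\ge\frac{\kappa_p(x)}{4}\dist^2(y,S_0(x))$, which would enlarge $\kappa_q(x)$ in Lemma~\ref{lem:QG_f_to_QG_q_global} to $\frac{\kappa_p(\widehat L-L)}{2\kappa_p+4(\widehat L-L)}$. It also makes the origin of the constant transparent. The price is that existence of the curve and the energy identity $\frac{d}{dt}h(y(t))=-\|\dot y(t)\|^2$ for weakly convex lsc functions are themselves imported results (Br\'ezis or Ambrosio--Gigli--Savar\'e type). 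So, as written, your route is no more self-contained than the paper's one-line citation.

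You can remove that dependence using your own Step~1, with two corrections. First, the sufficient increase does not come from $V(x)$ being the global max. Write the step as $1/\Lambda$ with $\Lambda>L$. Taking $z=y_k$ in the projection inequality gives $\langle\nabla_y f(x,y_k),y_{k+1}-y_k\rangle\ge\Lambda\|y_{k+1}-y_k\|^2$. The descent lemma then yields the sharper bound $h(y_k)-h(y_{k+1})\ge(\Lambda-\frac L2)\|y_{k+1}-y_k\|^2$, not only $\frac{\Lambda}{2}\|y_{k+1}-y_k\|^2$.

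Second, apply PL at $y_k$ using the residual from the previous step, $\sqrt{\kappa_p h(y_k)}\le(\Lambda+L)\|y_k-y_{k-1}\|$. Insert this into $\sqrt{h(y_k)}-\sqrt{h(y_{k+1})}\ge\frac{h(y_k)-h(y_{k+1})}{2\sqrt{h(y_k)}}$, and sum using $\sqrt{ab}\le\frac a2+\frac b2$. This gives
\[
\dist(y_0,S_0(x))\le\Big(\frac{2}{\sqrt{\Lambda-L/2}}+\frac{2(\Lambda+L)}{(\Lambda-L/2)\sqrt{\kappa_p(x)}}\Big)\sqrt{h(y_0)} .
\]
Here $\Lambda$ is only an auxiliary step parameter, unrelated to the paper's $\widehat L$. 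Letting $\Lambda\to\infty$ therefore recovers the same $\kappa_p/4$ bound with no flow theory. So your claim that the discrete route only yields constants depending on $\widehat L/\kappa_p$ is not accurate.

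Note also that with your weaker decrease constant $\Lambda/2$, this same estimate only gives $\kappa_p/16$. Your closing remark that the factor-of-two slack in $1/8$ leaves room for a discrete argument should therefore not be relied on without the sharper constant.
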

							\begin{proof}{Proof}
								Define  $\phi_x(y) := -f(x,y)+\iota_{\cY}(y)$. 	
								Consequently, its minimum value is $\phi_x^*=-V(x)$, and the optimal solution set is $\arg\min \phi_x=S_0(x)$. 	Because \(f(x,\cdot)\) is \(L\)-smooth on the convex set \(\cY\), the map \(y\mapsto -f(x,y)\) is \(L\)-weakly convex on \(\cY\). Since \(\iota_{\cY}\) is convex, it follows that \(\phi_x\) is \(L\)-weakly convex. In addition, for every \(y\in\cY\), $	\partial \phi_x(y)= -\nabla_y f(x,y)+\cN_{\cY}(y).
								$ 	Hence Assumption~\ref{ass:PL_f_global} is exactly the global PL inequality for \(\phi_x\).  Invoking the property that the PL condition implies the QG condition \cite[Theorem~3.1]{liao2024error}, we obtain \eqref{eq:QG_f_global} with the modulus $ \kappa_p(x)/8$. This completes the proof.
								\null\hfill$\square$\end{proof}
							
							\begin{lemma}
								\label{lem:explicit_Lxqy_short}
								Assume that $\widehat{L}>L$. 
								Then, for all $y,y'\in\cY$, it holds that
								\begin{equation}\label{eq:subgrad_Lip_explicit_short}
									\left\|\nabla_x q(x,y) - \nabla_x q(x,y')\right\|
									\le
									\left(D_{\cY}L_0 + \left(3+\frac{L}{\widehat{L}}\right)L\right)\|y-y'\|, \text{ and }
								\end{equation}
								\begin{equation}\label{eq:subgrad_Lip_explicit2}
									\left\|\nabla_x q(x,y) - \nabla_x q(x',y)\right\|
									\le
									\left(D_{\cY}L_0 + \left(1+\frac{L}{\widehat{L}}\right)L\right)\|x-x'\|,
								\end{equation}
								where $D_{\cY}$ is the diameter of $\cY$.
							\end{lemma}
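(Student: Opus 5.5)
The plan is to get an explicit formula for $\nabla_x q$ and then bound differences term by term. Write $z^*(x,y):=\Pi_{\cY}\big(y+\widehat L^{-1}\nabla_y f(x,y)\big)$ for the unique maximizer in \eqref{eq:q}. Since the inner problem is $\widehat L$-strongly concave in $z$ and the feasible set $\cY$ does not depend on $(x,y)$, Danskin's theorem gives
\[
\nabla_x q(x,y)=\nabla_x f(x,y)+\nabla^2_{xy}f(x,y)\,\big(z^*(x,y)-y\big).
\]
The term $-\tfrac{\widehat L}{2}\|z-y\|^2$ has no $x$-dependence and therefore contributes nothing. Two facts follow from nonexpansiveness of $\Pi_{\cY}$ together with \eqref{eq:Lipschtiz_smooth}. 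First, $\|z^*(x,y)-y\|\le D_{\cY}$, because both points lie in $\cY$. Second, $\|\nabla^2_{xy}f\|\le L$, since the mixed block of the Hessian is bounded by the gradient Lipschitz constant.

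For \eqref{eq:subgrad_Lip_explicit2}, fix $y$ and split the difference as follows:
\[
\nabla_x q(x,y)-\nabla_x q(x',y)=\big[\nabla_x f(x,y)-\nabla_x f(x',y)\big]+\big[\nabla^2_{xy}f(x,y)-\nabla^2_{xy}f(x',y)\big](z^*-y)+\nabla^2_{xy}f(x',y)\big(z^*(x,y)-z^*(x',y)\big).
\]
The three brackets are bounded in order:
\begin{itemize}
\item the first by $L\|x-x'\|$;
\item the second by $L_0 D_{\cY}\|x-x'\|$, using \eqref{eq:mainLxyy_ass_compact_short};
\item the third by $L\cdot \widehat L^{-1}\|\nabla_y f(x,y)-\nabla_y f(x',y)\|\le (L^2/\widehat L)\|x-x'\|$.
\end{itemize}
Summing gives exactly $\big(D_{\cY}L_0+(1+L/\widehat L)L\big)\|x-x'\|$.

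For \eqref{eq:subgrad_Lip_explicit_short}, fix $x$ and split the same way, but now $y$ also enters through the factor $(z^*-y)$:
\[
\nabla_x q(x,y)-\nabla_x q(x,y')=\big[\nabla_x f(x,y)-\nabla_x f(x,y')\big]+\big[\nabla^2_{xy}f(x,y)-\nabla^2_{xy}f(x,y')\big](z^*(x,y)-y)+\nabla^2_{xy}f(x,y')\big[(z^*(x,y)-y)-(z^*(x,y')-y')\big].
\]
The first two brackets give $L\|y-y'\|$ and $L_0D_{\cY}\|y-y'\|$. For the last bracket, nonexpansiveness gives
\[
\|z^*(x,y)-z^*(x,y')\|\le \|y-y'\|+\widehat L^{-1}L\|y-y'\|.
\]
Adding the $\|y-y'\|$ coming from the shift yields $(2+L/\widehat L)\|y-y'\|$. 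Multiplying by the bound $L$ on the Hessian block gives $(2+L/\widehat L)L$, and the total is $\big(D_{\cY}L_0+(3+L/\widehat L)L\big)\|y-y'\|$, as claimed.

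The main obstacle is justifying the gradient formula, that is, Danskin's theorem with a unique maximizer and an $x$-independent compact feasible set, and recognizing that the $x$-derivative of the inner objective $\langle\nabla_y f(x,y),z-y\rangle$ is $\nabla^2_{xy}f\,(z-y)$. Everything else is triangle-inequality bookkeeping. One point needs care: if $\nabla^2_{xy}f$ is read as $\partial_x\nabla_y f$, its operator norm must be bounded by $L$. This holds because the full Hessian has norm at most $L$ by \eqref{eq:Lipschtiz_smooth}, and it is valid on the open neighborhood where $f\in C^2$.
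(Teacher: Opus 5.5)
Your proof is correct and is essentially the paper's argument. It uses the same Danskin formula $\nabla_x q(x,y)=\nabla_x f(x,y)+\nabla^2_{xy}f(x,y)\,(z^*(x,y)-y)$, the same three-term splittings, and the same bounds $L$, $D_{\cY}L_0$, and $(2+L/\widehat L)L$ (resp.\ $L^2/\widehat L$) on the third term. One small correction to your last remark: $\|\nabla^2_{xy}f\|\le L$ follows from \eqref{eq:Lipschtiz_smooth} only on $\cX\times\cY$, where that assumption is made, not on the whole open neighborhood, but $\cX\times\cY$ is all your argument uses.
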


							\begin{proof}{Proof}
								For a fixed pair $(x,y)$, the inner maximization problem defining $q(x,y)$ in \eqref{eq:q} is strongly concave with respect to $z$. Consequently, it admits a unique maximizer given by
								$
								z(x,y)
								=
								\Pi_{\cY}\!\left(y+\frac{1}{\widehat{L}}\nabla_y f(x,y)\right).
								$
								Applying Danskin's theorem \citep{danskin1966theory}, the gradient of $q$ with respect to $x$ is
								\begin{equation}\label{eq:grad_q_formula_short}
									\nabla_x q(x,y)
									=
									\nabla_x f(x,y)+\nabla^2_{xy} f(x,y)\big(z(x,y)-y\big).
								\end{equation}
								Let $\Delta(x,y):=z(x,y)-y$ denote the update step. Because the mapping $y\mapsto \nabla_x f(x,y)$ has Jacobian $\nabla^2_{xy} f(x,y)$ and $\cY$ is a compact convex set, the smoothness condition~\eqref{eq:Lipschtiz_smooth} implies that  
								\begin{equation}\label{eq:nablaxf_lip_short}
									\|\nabla_x f(x,y)-\nabla_x f(x,y')\|
									\le
									L\,\|y-y'\|,   \text{ and }   \|\nabla^2_{xy} f(x,y)\| \le L,
									\qquad \forall y,y'\in\cY.
								\end{equation}
								Next, leveraging the nonexpansiveness of the Euclidean projection and the $L$-Lipschitz continuity of $\nabla_y f(x,\cdot)$, we have
								\begin{equation}
									\|z(x,y)-z(x,y')\|
									\le
									\left\|
									y-y'
									+\frac{1}{\widehat{L}}\big(\nabla_y f(x,y)-\nabla_y f(x,y')\big)
									\right\|
									\le
									\left(1+\frac{L}{\widehat{L}}\right)\|y-y'\|.
									\label{eq:z_lip_short}
								\end{equation}
								It then follows from the triangle inequality that
								\begin{equation}\label{eq:delta_lip_short}
									\|\Delta(x,y)-\Delta(x,y')\|
									\le
									\left(2+\frac{L}{\widehat{L}}\right)\|y-y'\|.
								\end{equation}
								Furthermore, because $y, z(x,y) \in \cY$ and $\cY$ has a bounded diameter $D_{\cY}$, we   have
								\begin{equation}\label{eq:delta_bound_short}
									\|\Delta(x,y)\|\le D_{\cY}.
								\end{equation}
								Using  the analytical form \eqref{eq:grad_q_formula_short}, we decompose the gradient difference as
								\begin{equation*}
									\begin{aligned}
										&\nabla_x q(x,y)-\nabla_x q(x,y')
										=
										\big(\nabla_x f(x,y)-\nabla_x f(x,y')\big) \\
										&\quad
										+\big(\nabla^2_{xy} f(x,y)-\nabla^2_{xy} f(x,y')\big)\Delta(x,y)
										+\nabla^2_{xy} f(x,y')\big(\Delta(x,y)-\Delta(x,y')\big).
									\end{aligned}
								\end{equation*}
								Taking the norm on both sides and   applying \eqref{eq:mainLxyy_ass_compact_short}, \eqref{eq:nablaxf_lip_short}, \eqref{eq:delta_lip_short}, and \eqref{eq:delta_bound_short} yields
								\begin{equation*}
									\begin{aligned}
										\|\nabla_x q(x,y)-\nabla_x q(x,y')\|
										&\le
										L \|y-y'\|
										+
										L_0\|\Delta(x,y)\|\,\|y-y'\|
										+
										L\left(2+\frac{L}{\widehat{L}}\right)\|y-y'\| \\
										&\le
										\left(
										D_{\cY}L_0
										+
										\left(3+\frac{L}{\widehat{L}}\right)L
										\right)\|y-y'\|.
									\end{aligned}
								\end{equation*}
								This establishes the desired bound \eqref{eq:subgrad_Lip_explicit_short}.
								
								In addition, we have 
								\begin{equation*}
									\begin{aligned}
										\nabla_x q(x,y)-\nabla_x q(x',y)
										={}&
										\bigl(\nabla_x f(x,y)-\nabla_x f(x',y)\bigr) \\
										&+
										\bigl(\nabla_{xy}^2 f(x,y)-\nabla_{xy}^2 f(x',y)\bigr)\Delta(x,y) +
										\nabla_{xy}^2 f(x',y)\bigl(\Delta(x,y)-\Delta(x',y)\bigr). 
									\end{aligned}
								\end{equation*}
								Using the $L$-smoothness of $f$, \eqref{eq:mainLxyy_ass_compact_short}, and~\eqref{eq:delta_bound_short}, we get
								\[
								\|\nabla_x f(x,y)-\nabla_x f(x',y)\|
								\le
								L\|x-x'\|,
								\]
								\[
								\|\nabla_{xy}^2 f(x,y)-\nabla_{xy}^2 f(x',y)\|\,\|\Delta(x,y)\|
								\le
								D_{\cY}L_0\|x-x'\|, \text{ and } 	\|\nabla_{xy}^2 f(x',y)\|
								\le
								L.
								\]  
								Moreover, by the nonexpansiveness of $\Pi_{\cY}$ and the $L$-Lipschitz continuity of $\nabla_y f(\cdot,y)$,
								\[
								\|\Delta(x,y)-\Delta(x',y)\|
								=
								\|z(x,y)-z(x',y)\|
								\le
								\frac{L}{\widehat{L}}\|x-x'\|.
								\]
								Combining these estimates yields  \eqref{eq:subgrad_Lip_explicit2}. The proof is then completed.
								\null\hfill$\square$\end{proof}
							
							Now, we are ready to prove Lemma~\ref{lem:QG_f_to_QG_q_global} and Theorem~\ref{thm:main_betaN_global} as follows.
							\begin{proof}{Proof of Lemma~\ref{lem:QG_f_to_QG_q_global}.}
								For a fixed $y\in\cY$, we define the local approximation
								\[
								m_{\widehat{L}}(x,y;z):=
								\langle \nabla_y f(x,y),\,z-y\rangle-\frac{\widehat{L}}{2}\|z-y\|^2.
								\]
								By the $L$-smoothness of $f(x,\cdot)$ at $y$, we have for any $z\in\cY$,
								\[
								f(x,z)\ \ge\
								f(x,y)+\langle \nabla_y f(x,y),\,z-y\rangle-\frac{L}{2}\|z-y\|^2.
								\]
								Rearranging the terms yields
								\[
								m_{\widehat{L}}(x,y;z)
								\le
								f(x,z)-f(x,y)-\frac{\widehat{L}-L}{2}\|z-y\|^2.
								\]
								Taking the maximum over $z\in\cY$ and adding $f(x,y)$ to both sides, we obtain the upper bound for $q(x,y)$:
								\[
								q(x,y)
								\le
								\max_{z\in\cY}\Big(f(x,z)-\frac{\widehat{L}-L}{2}\|z-y\|^2\Big).
								\]
								Therefore, the optimality gap for $q(x,y)$ can be lower bounded as
								\begin{equation}\label{eq:V_minus_q_global}
									V(x)-q(x,y)
									\ \ge\
									\min_{z\in\cY}
									\Big(
									V(x)-f(x,z)+\frac{\widehat{L}-L}{2}\|z-y\|^2
									\Big).
								\end{equation}
								Applying Lemma~\ref{lem:PL_to_QG_f_global} to $V(x)-f(x,z)$ gives
								\[
								V(x)-q(x,y)
								\ge
								\min_{z\in\cY}
								\Big(
								\frac{\kappa_p(x)}{8}\,\dist^2\big(z,S_0(x)\big)+\frac{\widehat{L}-L}{2}\|z-y\|^2
								\Big).
								\]
								Let $s\in S_0(x)$ be an arbitrary optimal point for a fixed $x\in\cX$. By the triangle inequality $\|y-s\|\le \|y-z\|+\|z-s\|$ and the  inequality $t_1 a^2+t_2 b^2\ge\frac{t_1t_2}{t_1+t_2}(a+b)^2$ (valid for all $t_1,t_2>0$), we obtain
								\[
								\frac{\kappa_p(x)}{8}\|z-s\|^2+\frac{\widehat{L}-L}{2}\|y-z\|^2 \ge \frac{(\kappa_p(x)/8) \cdot (\widehat{L}-L)/2}{\kappa_p(x)/8+(\widehat{L}-L)/2}\|y-s\|^2.
								\]
								Taking $s=\Pi_{S_0(x)}(z)$  yields
								\begin{equation*}
									\frac{\kappa_p(x)}{8}\,\dist^2\big(z,S_0(x)\big)+\frac{\widehat{L}-L}{2}\|z-y\|^2  \ge \frac{\kappa_p(x)(\widehat{L}-L)}{2\kappa_p(x)+8(\widehat{L}-L)}\dist^2\big(y,S_0(x)\big)  =
									\kappa_q(x)\,\dist^2\big(y,S_0(x)\big).
								\end{equation*}
								Because this lower bound holds uniformly for any $z\in\cY$, we conclude \eqref{eq:QG_q_global}.
								\null\hfill$\square$\end{proof}

							\begin{proof}{Proof of Theorem~\ref{thm:main_betaN_global}.}
								Let $\cJ_N(x):=\{y\in Y_N:\ q(x,y)=V_N(x)\}$.
								Fix an arbitrary $y\in \cJ_N(x)$. By definition, $q(x,y)=V_N(x)$, and invoking Theorem~\ref{thm:value solution-gap} yields the optimality gap:
								\[
								V(x)-q(x,y)=V(x)-V_N(x)\le \widehat{L} \beta_N^2.
								\]
								Applying the quadratic growth property from Lemma~\ref{lem:QG_f_to_QG_q_global} at $y$ gives
								\[
								\kappa_q(x)\,\dist^2\big(y,S_0(x)\big)
								\le
								V(x)-q(x,y)
								\le
								\widehat{L} \beta_N^2,
								\]
								where $S_0(x) := \arg\max_{y\in \cY} f(x,y)$.
								Rearranging this inequality, we obtain
								\begin{equation}\label{eq:active_dist_global}
									\dist\big(y,S_0(x)\big)
									\le
									\sqrt{\frac{\widehat{L} \beta_N^2}{\kappa_q(x)}}.
								\end{equation}
								Since the choice of $y\in \cJ_N(x)$ was arbitrary, taking the supremum over $\cJ_N(x)$ implies
								\begin{equation}\label{eq:active_set_dist_global}
									\sup_{y\in \cJ_N(x)}\dist\big(y,S_0(x)\big)
									\le \sqrt{\frac{\widehat{L}}{\kappa_q(x)}} \beta_N.
								\end{equation}
								
								By Lemma~\ref{lem:QG_f_to_QG_q_global} , any maximizer of $q(x, \cdot)$ must belong to $S_0(x)$. Since every point in $S_0(x)$ also maximizes $q(x, \cdot)$, we have
								$
								\arg \max _{y \in \cY} q(x, y)=S_0(x) .
								$
								Therefore, by Danskin's theorem, it holds that
								\[
								\partial V(x)
								=
								\mathrm{co}\Big(\bigcup_{y\in S_0(x)} \nabla_x q(x,y)\Big),
								\qquad
								\partial V_N(x)
								=
								\mathrm{co}\Big(\bigcup_{y\in \cJ_N(x)} \nabla_x q(x,y)\Big).
								\]Here, $V(x)=\max_{y\in \cY} q(x,y)$ may be attained at multiple points (all of $S_0(x)$).
								Define the corresponding sets of gradients as
								\[
								\mathcal{G}(x):=\bigcup_{y\in S_0(x)} \nabla_x q(x,y)
								\quad \text{and} \quad
								\mathcal{G}_N(x):=\bigcup_{y\in \cJ_N(x)} \nabla_x q(x,y).
								\]
								By the standard property of the   distance between convex hulls \citep[Proposition 1.4]{iusem2010distances}, we have
								\begin{equation}
									\label{eq:vndistance}
									d\big(\partial V_N(x),\partial V(x)\big)
									=
									d\big(\mathrm{co}(\mathcal{G}_N(x)),\mathrm{co}(\mathcal{G}(x))\big)
									\le
									d\big(\mathcal{G}_N(x),\mathcal{G}(x)\big).
								\end{equation}
								Now, consider any gradient $\xi \in \mathcal{G}_N(x)$. By definition, there exists a $y \in \cJ_N(x)$ such that $\xi = \nabla_x q(x,y)$.
								Choose an optimal point $y^*\in S_0(x)$ that achieves the projection distance, such that $\|y-y^*\|=\dist\big(y,S_0(x)\big)$.
								By Lemma~\ref{lem:explicit_Lxqy_short}, the gradient difference is bounded by
								\begin{equation*}
									\begin{aligned}
										\|\xi - \nabla_x q(x,y^*)\| &= \|\nabla_x q(x,y) - \nabla_x q(x,y^*)\| \\
										&\le
										\left(D_{\cY}L_0 + \left(3+\frac{L}{\widehat{L}}\right)L\right)\|y-y^*\|
										=
										\left(D_{\cY}L_0 + \left(3+\frac{L}{\widehat{L}}\right)L\right)\dist\big(y,S_0(x)\big).
									\end{aligned}
								\end{equation*}
								Since $\nabla_x q(x,y^*)\in \mathcal{G}(x)$, we further obtain the point-to-set distance:
								\[
								\dist\big(\xi,\mathcal{G}(x)\big)
								\le
								\left(D_{\cY}L_0 + \left(3+\frac{L}{\widehat{L}}\right)L\right)\dist\big(y,S_0(x)\big).
								\]
								Taking the supremum over all $\xi\in\mathcal{G}_N(x)$ (which corresponds to taking the supremum over $y\in \cJ_N(x)$) gives
								\[
								d\big(\mathcal{G}_N(x),\mathcal{G}(x)\big)
								\le
								\left(D_{\cY}L_0 + \left(3+\frac{L}{\widehat{L}}\right)L\right)\sup_{y\in \cJ_N(x)}\dist\big(y,S_0(x)\big).
								\]
								Substituting the bound \eqref{eq:active_set_dist_global} into this inequality and combining it with \eqref{eq:vndistance} yields the desired bound \eqref{eq:main_bound_beta_global}. This completes the proof.
								\null\hfill$\square$\end{proof}
							
							\subsection{Proof of Proposition~\ref{thm:conv_stat}}\label{appen:5}

							\begin{proof}{Proof}
								Let $x_N \in \mathcal{D}_N$ be an arbitrary stationary point of \eqref{eq:PN}, which implies  $r_N(x_N)=0$.
								For any \(x\in\mathcal X\),
								\[
								r(x)
								=
								\dist\bigl(0,\partial V(x)+\mathcal N_{\mathcal X}(x)\bigr)
								\le
								\dist\bigl(0,\partial V_N(x)+\mathcal N_{\mathcal X}(x)\bigr)
								+
								d\bigl(\partial V_N(x),\partial V(x)\bigr).
								\]
								Applying this inequality at \(x=x_N\in\mathcal D_N\), together with
								\(r_N(x_N)=0\) and Theorem~\ref{thm:main_betaN_global}, yields
								$
								r(x_N)\le L_\kappa(x_N)\beta_N.
								$ 
								Recall the definition of  $\mathcal{R}_{\mathcal{D}}$. By construction, it provides the lower bound
								$r(x_N) \ge
								\mathcal{R}_{\mathcal{D}}\big(\dist(x_N, \mathcal{D})\big).
								$
								Combining these two relations, we arrive at
								$
								\mathcal{R}_{\mathcal{D}}\big(\dist(x_N, \mathcal{D})\big) \le L_{\kappa}(x_N)\beta_N.
								$
								Because $\mathcal{R}_{\mathcal{D}}$ is a growth modulus, its generalized inverse $\mathcal{R}_{\mathcal{D}}^{-1}(t) := \sup\{\tau \ge 0 : \mathcal{R}_{\mathcal{D}}(\tau) \le t\}$ is well-defined and nondecreasing. Applying $\mathcal{R}_{\mathcal{D}}^{-1}$ to both sides of the inequality preserves the order, yielding
								$
								\dist(x_N, \mathcal{D}) \le \mathcal{R}_{\mathcal{D}}^{-1}\big( L_{\kappa}(x_N)\beta_N \big).
								$
								To bound the overall deviation of the set $\mathcal{D}_N$, we take the supremum over all $x_N \in \mathcal{D}_N$ on both sides. Utilizing the nondecreasing property of $\mathcal{R}_{\mathcal{D}}^{-1}$, we conclude that
								\[
								d({\cD}_N,\cD) =\sup_{x_N\in\mathcal{D}_N} \dist(x_N, \mathcal{D}) \le \sup_{x_N\in\mathcal{D}_N} \mathcal{R}_{\mathcal{D}}^{-1}\big( L_{\kappa}(x_N)\beta_N \big)\le  \mathcal{R}_{\mathcal{D}}^{-1}\big( \sup_{x\in\mathcal{D}_N}L_{\kappa}(x)\beta_N \big).
								\]
								This establishes the desired bounds and completes the proof.
								\null\hfill$\square$\end{proof}

							\section{Proofs of the Main Results in Section~\ref{sec:algorithm}}
							
							\subsection{Proof of Lemma~\ref{lem:vi-lipschitz}}\label{appen:B1}

							\begin{proof}{Proof} 
								Fix any \(i\in [N]\) and \(x_1,x_2\in\cX\). For \(r=1,2\), let
								\[
								y_i^*(x_r)\in \argmax_{z\in\cY} m_{\widehat{L}}(x_r,y_i;z).
								\]
								Then
								\[
								q(x_r,y_i)=f(x_r,y_i)+m_{\widehat{L}}\bigl(x_r,y_i;y_i^*(x_r)\bigr),
								\qquad r=1,2.
								\]
								Since \(y_i^*(x_1)\) is feasible for the maximization defining \(q(x_2,y_i)\), we have
								\begin{equation*}
									\begin{aligned}
										q(x_1,y_i)-q(x_2,y_i)
										&\le f(x_1,y_i)+m_{\widehat{L}}\bigl(x_1,y_i;y_i^*(x_1)\bigr)
										-f(x_2,y_i)-m_{\widehat{L}}\bigl(x_2,y_i;y_i^*(x_1)\bigr) \\
										&= \bigl(f(x_1,y_i)-f(x_2,y_i)\bigr)
										+ \Big\langle \nabla_y f(x_1,y_i)-\nabla_y f(x_2,y_i),\, y_i^*(x_1)-y_i \Big\rangle .
									\end{aligned}
								\end{equation*}
								Using the \(L\)-Lipschitz continuity of $f$,  the \(L\)-Lipschitz continuity of \(\nabla_y f\) in~\eqref{eq:Lipschtiz_smooth}, and the fact that
								\(y_i^*(x_1),y_i\in\cY\), so \(\|y_i^*(x_1)-y_i\|\le D_{\cY}\), we obtain
								\[
								q(x_1,y_i)-q(x_2,y_i)
								\le L\|x_1-x_2\|+L D_{\cY}\|x_1-x_2\|
								= G_v\|x_1-x_2\|.
								\]
								Interchanging \(x_1\) and \(x_2\) yields the reverse bound, and hence
								\[
								|q(x_1,y_i)-q(x_2,y_i)|\le G_v\|x_1-x_2\|.
								\]
								Therefore, the mapping \(x\mapsto q(x,y_i)\) is \(G_v\)-Lipschitz on \(\cX\). Since it is differentiable on the compact set \(\cX\), we further obtain
								$
								\|\nabla_x q(x,y_i)\|\le G_v \text{ for all } x\in\cX.
								$
								\null\hfill$\square$\end{proof}
							
							\subsection{Proof of Lemma~\ref{thm:inner_complexity}}\label{appen:B2}
							
							\begin{proof}{Proof}
								(a)
								By the projection step~\eqref{eq:update-3.15}, there exists $\xi^{(t+1)}\in {\mathcal N}_{\mathcal X}(z^{(t+1)})$ such that
								\begin{equation}\label{eq:inner_opt_cond}
									\frac{1}{\alpha_k}(z^{(t+1)}-z^{(t)}) + \nabla_z\widetilde V_{N_k}(z^{(t)},\mu_k) + \xi^{(t+1)} = 0.
								\end{equation}
								Taking the inner product with $z^{(t+1)}-z^{(t)}$ and using $\langle \xi^{(t+1)}, z^{(t)}-z^{(t+1)}\rangle \leq 0$, we obtain
								\begin{equation*}
									\langle \nabla_z\widetilde V_{N_k}(z^{(t)},\mu_k), z^{(t+1)}-z^{(t)}\rangle \leq -\frac{1}{\alpha_k} \|z^{(t+1)}-z^{(t)}\|^2 = -L_{\mu_k} \|z^{(t+1)}-z^{(t)}\|^2.
								\end{equation*}
								Since \(\widetilde V_{N_k}(\cdot,\mu_k)\) is \(L_{\mu_k}\)-smooth, we have
								\[
								\widetilde V_{N_k}(z^{(t+1)},\mu_k)
								\le
								\widetilde V_{N_k}(z^{(t)},\mu_k)
								+\Big\langle \nabla_z\widetilde V_{N_k}(z^{(t)},\mu_k),\, z^{(t+1)}-z^{(t)}\Big\rangle
								+\frac{L_{\mu_k}}{2}\|z^{(t+1)}-z^{(t)}\|^2.
								\]
								Combining the above two inequalities, we have~\eqref{eq:inner_descent}.

								(b) We next bound the stationarity residual $r_k(z^{(t+1)})$. By~\eqref{eq:inner_opt_cond},
								\[
								\nabla_z\widetilde V_{N_k}(z^{(t+1)},\mu_k)+\xi^{(t+1)}
								=
								\Big(\nabla_z\widetilde V_{N_k}(z^{(t+1)},\mu_k)-\nabla_z\widetilde V_{N_k}(z^{(t)},\mu_k)\Big)
								-\frac{1}{\alpha_k}\left(z^{(t+1)}-z^{(t)}\right).
								\]
								Applying the triangle inequality, we obtain
								\begin{equation*}
									\begin{aligned}
										r_{k}(z^{(t+1)}) &\leq \big\|\nabla_z\widetilde V_{N_k}(z^{(t+1)},\mu_k) - \nabla_z\widetilde V_{N_k}(z^{(t)},\mu_k)\big\| + \frac{1}{\alpha_k}\|z^{(t+1)}-z^{(t)}\| \\
										&\leq L_{\mu_k}\|z^{(t+1)}-z^{(t)}\| + \frac{1}{\alpha_k}\|z^{(t+1)}-z^{(t)}\| = 2L_{\mu_k}\|z^{(t+1)}-z^{(t)}\|,
									\end{aligned}
								\end{equation*}
								Combining this estimate with~\eqref{eq:inner_descent} yields
								\begin{equation}\label{eq:inner_residual}
									(r_{k}(z^{(t+1)}))^2 \leq 8 L_{\mu_k} \Big(\widetilde V_{N_k}(z^{(t)},\mu_k) - \widetilde V_{N_k}(z^{(t+1)},\mu_k)\Big).
								\end{equation}
								Now suppose that the inner loop runs for \(T\) iterations. Summing~\eqref{eq:inner_residual} from \(t=0\) to \(T-1\) gives
								\begin{equation*}
									\begin{aligned}
										\sum_{t=0}^{T-1} (r_{k}(z^{(t+1)}))^2 & \leq 8 L_{\mu_k} \Big(\widetilde V_{N_k}(z^{(0)},\mu_k) - \widetilde V_{N_k}(z^{(T)},\mu_k)\Big) \\
										& \leq 8 L_{\mu_k} \Big(\widetilde V_{N_k}(x^{(k)},\mu_k) - \min_{z \in \cX} \widetilde V_{N_k}(z, \mu_k)\Big).
									\end{aligned}
								\end{equation*}
								Hence
								\begin{equation*}
									\min_{0 \le t \le T-1} (r_{k}(z^{(t+1)}))^2 \leq \frac{1}{T} \sum_{t=0}^{T-1} (r_{k}(z^{(t+1)}))^2 \le \frac{8 L_{\mu_k}}{T} \Big(\widetilde V_{N_k}(x^{(k)},\mu_k) - \min_{z \in \cX} \widetilde V_{N_k}(z, \mu_k)\Big).
								\end{equation*}
								Since \(\mu_k\le \epsilon_k<1\), we have
								$
								L_{\mu_k}
								\le \frac{C}{\mu_k}.
								$
								Therefore, the condition
								\[
								\frac{8C}{T\mu_k}
								\Big(
								\widetilde V_{N_k}(x^{(k)},\mu_k)-\min_{z\in\cX}\widetilde V_{N_k}(z,\mu_k)
								\Big)
								\le \epsilon_k^2
								\]
								guarantees that
								$\min_{0 \le t \le T-1} r_{k}(z^{(t+1)}) \le \epsilon_k$.
								This completes the proof by letting \mbox{$T=T_k$}.
								\null\hfill$\square$\end{proof}
							
							\subsection{Proof of Lemma~\ref{lem:lse-grad-to-surrogate}}\label{appen:B3}
							\begin{proof}{Proof}
								By Lemma~\ref{lem:lse-basic}(b),
								\[
								\nabla_x\widetilde V_N(x,\mu)=\sum_{i=1}^N p_i(x,\mu)\nabla_x q(x,y_i),
								\]
								where \(\{p_i(x,\mu)\}_{i=1}^N\) denote the softmax weights. To construct a valid \(\epsilon\)-subgradient of \(V_N(x)\), we isolate the components associated with the \(\epsilon\)-active set, denoted by \(I_{N,\epsilon}(x)\). Here, the set $I_{N, \epsilon}(x)$ is nonempty because it contains every maximizer of $V_N(x)$.
								By normalizing the weights strictly over this active set, we define
								\[
								\widehat p_i:=\frac{p_i(x,\mu)}{\sum_{j\in I_{N,\epsilon}(x)}p_j(x,\mu)},\quad i\in I_{N,\epsilon}(x),
								\]
								and  
								\[
								g_\epsilon:=\sum_{i\in I_{N,\epsilon}(x)}\widehat p_i\,\nabla_x q(x,y_i)\in \partial_\epsilon V_N(x).
								\]
								Using \(\|\nabla_x q(x,y_i)\|\le G_v\) from Lemma~\ref{lem:vi-lipschitz}, we obtain
								\begin{equation*}
									\begin{aligned}
										\operatorname{dist}\big(\nabla_x\widetilde V_N(x,\mu), \partial_\epsilon V_N(x)\big)
										&\le \|\nabla_x\widetilde V_N(x,\mu)-g_\epsilon\|\le 2G_v\sum_{i\notin I_{N,\epsilon}(x)}p_i(x,\mu).
									\end{aligned}
								\end{equation*}
								For any inactive index \(i \notin I_{N,\epsilon}(x)\), it holds by definition that \(q(x,y_i) < V_N(x) - \epsilon\). Consequently,
								\[
								p_i(x,\mu)
								=
								\frac{\exp(q(x,y_i)/\mu)}{\sum_{j=1}^N \exp(q(x,y_j)/\mu)}
								\le
								\frac{\exp((V_N(x)-\epsilon)/\mu)}{\exp(V_N(x)/\mu)}
								=
								\exp\Big(-\frac{\epsilon}{\mu}\Big).
								\]
								Summing this exponential bound over the at most \(N\) inactive indices yields the desired result in~\eqref{eq:lse-grad-to-surrogate}.
								\null\hfill$\square$\end{proof}
							
							\subsection{Proof of Theorem~\ref{thm:growing_N} and the results in Remark~\ref{rem:limit_eps_stationarity}}\label{appen:B4}
							
							\begin{proof}{Proof of Theorem~\ref{thm:growing_N}.}
								Let
							$
							\overline	\epsilon:=\epsilon^{(K)}, $ and $
 							\overline	\mu :=\mu^{(K)} .
							$ By the inner-loop termination criterion of Algorithm~\ref{alg:growing-N}, it holds
								\[
								\operatorname{dist}\!\left(
								0,\,
								\nabla_x\widetilde V_{\overline N}(\bar x,\overline \mu)
								+\cN_{\cX}(\bar x)
								\right)
								\le \overline\epsilon.
								\]
								Hence there exists \(\overline\xi\in\cN_{\cX}(\bar x)\) such that
								\begin{equation}\label{eq:B4-terminal-smoothed-residual}
									\bigl\|
									\nabla_x\widetilde V_{\overline N}(\bar x,\overline\mu)+\overline\xi
									\bigr\|
									\le \overline\epsilon .
								\end{equation}
								
								We now compare the smoothed gradient with the
								\(\overline\epsilon\)-approximate subdifferential of \(V_{\overline N}\).
								By Lemma~\ref{lem:lse-grad-to-surrogate}, applied with
								\(N=\overline N\), \(x=\bar x\), \(\mu=\overline\mu\), and
								\(\epsilon=\overline\epsilon\), we have
								$
								\operatorname{dist}\!\left(
								\nabla_x\widetilde V_{\overline N}(\bar x,\overline\mu),
								\partial_{\overline\epsilon}V_{\overline N}(\bar x)
								\right)
								\le
								2G_v\overline N
								\exp (-\frac{\overline\epsilon}{\overline\mu} ).
								$
								Since
								$
								\overline\mu\le \frac{\overline\epsilon}{2\log(\overline N)},
								$
								it follows that 
								\begin{equation}\label{eq:B4-lse-error-finite}
									\operatorname{dist}\!\left(
									\nabla_x\widetilde V_{\overline N}(\bar x,\overline \mu ),
									\partial_{\overline \epsilon }V_{\overline N}(\bar x)
									\right)
									\le
									\frac{2G_v}{\overline N}
									\le
									\frac{\epsilon}{2},
								\end{equation}
								where the last inequality uses
								\(\overline N\ge 4G_v/\epsilon\).
								
								By~\eqref{eq:B4-lse-error-finite}, there exists
								$
								\overline g \in \partial_{\overline \epsilon }V_{\overline N}(\bar x)
								$
								such that
								$
								\bigl\|
								\overline g -\nabla_x\widetilde V_{\overline N}(\bar x,\overline\mu )
								\bigr\|
								\le \frac{\epsilon}{2}.
								$
								Combining this estimate with~\eqref{eq:B4-terminal-smoothed-residual}
								gives
								\[
								\|\overline g +\overline \xi \|
								\le
								\bigl\|
								\overline g-\nabla_x\widetilde V_{\overline N}(\bar x,\overline \mu )
								\bigr\|
								+
								\bigl\|
								\nabla_x\widetilde V_{\overline N}(\bar x,\overline \mu )+\overline\xi
								\bigr\| 
								\le \epsilon .
								\]
								Moreover, since \(\overline \epsilon \le \epsilon\), we have
							$
								I_{\overline N,\overline \epsilon }(\bar x)
								\subseteq
								I_{\overline N,\epsilon}(\bar x),
						$
								and hence
							$
								\partial_{\overline \epsilon }V_{\overline N}(\bar x)
								\subseteq
								\partial_{\epsilon}V_{\overline N}(\bar x).
							$
								Thus \(\overline g \in\partial_\epsilon V_{\overline N}(\bar x)\), while
								\(\overline \xi \in\cN_{\cX}(\bar x)\). Consequently,
								\[
								\operatorname{dist}\!\left(
								0,\,
								\partial_{\epsilon}V_{\overline N}(\bar x)
								+\cN_{\cX}(\bar x)
								\right)
								\le
								\|\overline g +\overline \xi \|
								\le \epsilon .
								\]
								This completes the proof.
								\null\hfill$\square$\end{proof}
								
								\begin{proof}{Proof of the results in Remark~\ref{rem:limit_eps_stationarity}.}
							 By
								Assumption~\ref{ass:sample}, Lemma~\ref{lem:beta_N_key}, and the
								Borel--Cantelli lemma, along a probability-one sample path we have 
								\(\overline N_\ell\to\infty\),
							$
								\beta_{\overline N_\ell}\to0.
							$ We fix such a sample path throughout the proof.
								
								Let \(x^*\) be an arbitrary accumulation point of
								\(\{\bar x^\ell\}_{\ell\ge1}\). Passing to a subsequence if necessary, and
								not relabeling it, we may assume that
							 $
								\bar x^\ell\to x^* .
							$
								By the assumed approximate stationarity condition, for each \(\ell\) there
								exist
							$
								g^\ell\in
								\partial_{\eta_\ell}V_{\overline N_\ell}(\bar x^\ell),
								$ ang $
								\xi^\ell\in \cN_{\cX}(\bar x^\ell),
							$
								such that
								\begin{equation}\label{eq:B4-rem-approx-stationarity}
									\|g^\ell+\xi^\ell\|\le \eta_\ell .
								\end{equation}
								Indeed, \(\partial_{\eta_\ell}V_{\overline N_\ell}(\bar x^\ell)\) is a
								compact convex set and \(\cN_{\cX}(\bar x^\ell)\) is closed and convex, so
								the distance is attained.
								
								By the definition of
								\(\partial_{\eta_\ell}V_{\overline N_\ell}(\bar x^\ell)\) and
								Lemma~\ref{lem:vi-lipschitz}, the vectors \(g^\ell\) are uniformly bounded:
								by $G_v$.
								Together with~\eqref{eq:B4-rem-approx-stationarity} and
								\(\eta_\ell\to0\), this implies that \(\{\xi^\ell\}_{\ell\ge1}\) is also
								bounded. Passing to a further subsequence if necessary, we may assume that
								$
								g^\ell\to g^*, $ and $
								\xi^\ell\to \xi^* .
							$
								Taking the limit in~\eqref{eq:B4-rem-approx-stationarity} gives
							$
								g^*+\xi^*=0 .
							$
								Moreover, since \(\bar x^\ell\to x^*\), \(\xi^\ell\to\xi^*\), and
								\(\xi^\ell\in\cN_{\cX}(\bar x^\ell)\), the closed-graph property of the
								normal-cone mapping for the closed convex set \(\cX\) yields
								$
								\xi^*\in\cN_{\cX}(x^*) .
							$
								
								It remains to prove that \(g^*\in\partial V(x^*)\). By Carath\'eodory's
								theorem in \(\mathbb R^n\), for each \(\ell\), there exist coefficients
							$
								\lambda_m^\ell\ge0,  $ $
								\sum_{m=1}^{n+1}\lambda_m^\ell=1,$
								$  
								$
								and sample points
								$
								y_m^\ell\in Y_{\overline N_\ell},
								$ for all $ m=1,\ldots,n+1,
								$
								such that
								\begin{equation}\label{eq:B4-rem-caratheodory}
									g^\ell
									=
									\sum_{m=1}^{n+1}
									\lambda_m^\ell
									\nabla_x q(\bar x^\ell,y_m^\ell),
								\end{equation}
								and
								\begin{equation}\label{eq:B4-rem-active}
									q(\bar x^\ell,y_m^\ell)
									\ge
									V_{\overline N_\ell}(\bar x^\ell)-\eta_\ell,
									\qquad
									\forall m=1,\ldots,n+1 .
								\end{equation}
								Since \(\cY\) and the unit simplex in \(\mathbb R^{n+1}\) are compact, we
								may pass to a further subsequence such that, for each \(m=1,\ldots,n+1\),
								$
								y_m^\ell\to y_m^*\in\cY,
								\lambda_m^\ell\to\lambda_m^* .
							$ 
								By Theorem~\ref{thm:value solution-gap}, for all \(x\in\cX\),
								$
								0\le V(x)-V_{\overline N_\ell}(x)
								\le \widehat L\,\beta_{\overline N_\ell}^2 .
							$
								Since \(\beta_{\overline N_\ell}\to0\), it follows that
								\begin{equation}\label{eq:B4-rem-uniform-convergence}
									\sup_{x\in\cX}
									|V_{\overline N_\ell}(x)-V(x)|
									\to0 .
								\end{equation}
								Combining~\eqref{eq:B4-rem-uniform-convergence} with
								\(\bar x^\ell\to x^*\) and the continuity of \(V\), we obtain
							$
								V_{\overline N_\ell}(\bar x^\ell)\to V(x^*) .
							$
								Passing to the limit in~\eqref{eq:B4-rem-active}, using
								\(\eta_\ell\to0\) and the continuity of \(q\), gives
							$
								q(x^*,y_m^*)\ge V(x^*),\forall
								m=1,\ldots,n+1 .
							$
								Since
							$
								V(x^*)=\max_{y\in\cY} q(x^*,y),
							$
								we also have \(q(x^*,y_m^*)\le V(x^*)\). Therefore,
							$
								y_m^*\in \argmax_{y\in\cY} q(x^*,y),\forall
								m=1,\ldots,n+1 .
							$
								
								Assumption~\ref{ass:smoothness} and the continuity of the Euclidean
								projection imply that \(\nabla_x q\) is continuous on \(\cX\times\cY\).
								Taking the limit in~\eqref{eq:B4-rem-caratheodory}, we obtain
								\[
								g^*
								=
								\sum_{m=1}^{n+1}
								\lambda_m^*
								\nabla_x q(x^*,y_m^*)
								\in
								\operatorname{co}
								\left\{
								\nabla_x q(x^*,y):
								y\in\argmax_{y\in\cY}q(x^*,y)
								\right\}.
								\]
								Applying Danskin's theorem to
								$
								V(x)=\max_{y\in\cY}q(x,y),
							$
								we get
							$
								g^*\in\partial V(x^*) .
							$
								Combining this with 
								$
								\xi^*\in\cN_{\cX}(x^*),
							$ and $
								g^*+\xi^*=0,
							$
								we conclude that
								$
								0\in \partial V(x^*)+\cN_{\cX}(x^*) .
							$
								Since \(x^*\) was an arbitrary accumulation point, every accumulation point
								of \(\{\bar x^\ell\}_{\ell\ge1}\) is a Clarke stationary point of
								problem~\eqref{eq:value_problem} with probability one.
					\null\hfill$\square$\end{proof}

							\section{Additional Numerical Experiment Details}\label{app:numerics}
							
							This appendix provides additional implementation details for the numerical
							experiments in Section~\ref{sec:numer}. 
							
							\subsection{Details for the toy minimax example}\label{app:toy-details}
							
							For the toy problem~\eqref{eq:toy_family}, we write \(V_m\) when the dependence
							on the inner dimension \(m\) needs to be emphasized. Let
							\[
							n_o:=\left\lceil\frac{m}{2}\right\rceil,\qquad
							n_e:=\left\lfloor\frac{m}{2}\right\rfloor, \qquad a_o(x):=\frac{x_1^2-x_2^2}{5},\text{ and }
							a_e(x):=\frac{2x_1x_2}{5}.
							\] 
							The 
							value function can be evaluated as
							\[
							V(x)
							=
							n_o\,\phi_{\sin}(a_o(x))
							+
							n_e\,\phi_{\cos}(a_e(x)),
							\]
							where
							$
							\phi_{\sin}(a)
							:=
							\max_{t\in[-\pi,\pi]}\{at-\sin t\},
							\phi_{\cos}(b)
							:=
							\max_{t\in[-\pi,\pi]}\{bt-\cos t\}.
							$
							For \(x\in[-1,1]^2\), we have \(|a_o(x)|\le 1/5\) and
							\(|a_e(x)|\le 2/5\). Hence the above    problems
							admit the closed forms
							$
							\phi_{\sin}(a)
							=
							\sqrt{1-a^2}-a\arccos(a),
							\forall |a|\le 1,
							$
							and
							$
							\phi_{\cos}(b)
							=
							1+\pi |b|,
							\forall |b|\le 1.
							$
							Consequently, \(V(x)\) is evaluated directly from the above expression, rather
							than by solving a nonconvex inner maximization problem. The corresponding
							optimal value is
							\[
							v^*
							=
							n_o
							\left[
							\sqrt{1-\frac{1}{25}}
							-
							\frac{1}{5}\arccos\left(\frac{1}{5}\right)
							\right]
							+
							n_e,
							\]
							which is attained, for example, at \(x=(1,0)^\top\) and
							\(x=(-1,0)^\top\).
							
							For the majorant approximation \(V_N(x)=\max_{i\in[N]}q_m(x,y_i)\), it holds
							$
							\nabla_{yy}^2 F_m(x,y)
							=
							S_o\Diag(\sin y)+S_e\Diag(\cos y),
							$
							and therefore
							$
							\|\nabla_{yy}^2 F_m(x,y)\|\le 1,
							$ for all $ (x,y)\in[-1,1]^2\times[-\pi,\pi]^m .
							$
							Hence,  
							$
							\widehat L=1.5
							$
							is large than 1.

							For Figure~\ref{fig:toy-dimsweepN}(a), the reference grid in \(x\) is
							$
							\mathcal G_x^{61}
							:=
							\left\{
							-1+\frac{2r}{60}: r=0,\ldots,60
							\right\}^2 .
							$
							The reported curves average over \(60\) independent sample streams, generated
							from three base seeds and \(20\) replications per base seed.

							\subsection{Details for robust logistic regression}\label{app:rlr-details}
							
							We now describe the additional implementation details for the robust logistic regression
							experiments in Section~\ref{subsec:rlr}. All experiments use \(m=5\) feature
							groups and perturbation radii
							$
							r\in\{0.15,0.25,0.35\}.
							$
							For each dataset, labels are encoded as \(\{-1,1\}\). The implementation
							standardizes features using the training set only.  
							
							For Breast Cancer, we use samplewise stratified train/validation/test splits
							with ratio \(60\%/20\%/20\%\) and split seeds
							$
							202600,\ 202601,\ 202602,\ 202603,\ 202604.
							$
							The five feature groups are formed by feature-name keywords:
							\[\scriptsize
							\begin{aligned}
								G_1&:\ \{\texttt{radius},\texttt{perimeter},\texttt{area}\},\\
								G_2&:\ \{\texttt{texture}\},\\
								G_3&:\ \{\texttt{smoothness}\},\\
								G_4&:\ \{\texttt{compactness},\texttt{concavity},
								\texttt{concave points}\},\\
								G_5&:\ \{\texttt{symmetry},\texttt{fractal dimension}\}.
							\end{aligned}
							\]
							For Parkinsons, we use subject-level stratified train/validation/test splits
							with the same \(60\%/20\%/20\%\) ratio and split seeds
							$
							20000517,\ 20000518,\ 20000617,\ 20000618.
							$
							The five feature groups are
							\[
							\scriptsize
							\begin{aligned}
								G_1&:\ \{\texttt{MDVP:Fo(Hz)},\texttt{MDVP:Fhi(Hz)},
								\texttt{MDVP:Flo(Hz)}\},\\
								G_2&:\ \{\texttt{MDVP:Jitter(\%)},\texttt{MDVP:Jitter(Abs)},
								\texttt{MDVP:RAP},\texttt{MDVP:PPQ},\texttt{Jitter:DDP}\},\\
								G_3&:\ \{\texttt{MDVP:Shimmer},\texttt{Shimmer(dB)},
								\texttt{MDVP:Shimmer(dB)},\texttt{Shimmer:APQ3},
								\texttt{Shimmer:APQ5},\texttt{MDVP:APQ},
								\texttt{Shimmer:DDA}\},\\
								G_4&:\ \{\texttt{NHR},\texttt{HNR}\},\\
								G_5&:\ \{\texttt{RPDE},\texttt{D2},\texttt{DFA},
								\texttt{spread1},\texttt{spread2},\texttt{PPE}\}.
							\end{aligned}
							\]

							For the majorant approximation, the parameter \(\widehat L\) is computed
							separately for each dataset, perturbation radius, and train/validation/test
							split.  
							Let \(G\in\{0,1\}^{d\times 5}\) be the feature-group assignment matrix, where
							\(G_{rj}=1\) if feature coordinate \(r\) belongs to group \(j\), and
							\(G_{rj}=0\) otherwise. Let \(\sigma_{\rm tr}\in\RR^d\) denote the coordinatewise
							standard deviation of the unperturbed training features, computed from the
							training set only. For \(i=1,\ldots,n_{\rm tr}\) and \(j=1,\ldots,5\), define a matrix $A\in\RR^{n_{\rm tr}\times 5}$ with
							$
							A_{ij}
							:=
							2
							\sum_{r=1}^d
							\frac{(u_i^{\rm tr})_r}{(\sigma_{\rm tr})_r}G_{rj}.
							$
							We have
							$
							L 
							=
							\frac{1}{4}
							\lambda_{\max}\left(
							\frac{A^\top A}{n_{\rm tr}}
							\right).
							$
							The majorant parameter is then set to
							$
							\widehat L
							=
							\max\{10^{-6},\,1.1 L\}.
							$
							
							Now, we describe how to evaluate the robust test criteria in
							\eqref{eq:rlr-robust-loss}--\eqref{eq:rlr-cvar-loss}. 
							
							Let
							$
							\mathcal E_r
							:=
							\{-r,r\}^m
							$
							denote the vertex set of \(\cY_r\). For any fixed classifier parameter
							\(x\), the perturbed test feature \(\widetilde u_i^{\rm te}(y)\) is affine in
							\(y\). Therefore, the test loss
							$
							\ell_i^{\rm te}(x,y)
							=
							\log\left(
							1+\exp\bigl(
							-v_i^{\rm te}(\widetilde u_i^{\rm te}(y))^\top x
							\bigr)
							\right)
							$
							is convex in \(y\), because it is the logistic loss composed with an affine
							mapping. Consequently, the average test loss
							$
							y\mapsto
							\frac{1}{n_{\rm te}}
							\sum_{i=1}^{n_{\rm te}}
							\ell_i^{\rm te}(x,y)
							$
							is also convex in \(y\). Since a convex function over a compact polytope attains
							its maximum at an extreme point, the robust test loss in
							\eqref{eq:rlr-robust-loss} is computed exactly as
							\[
							L_{\rm rob}(x)
							=
							\max_{y\in\mathcal E_r}
							\frac{1}{n_{\rm te}}
							\sum_{i=1}^{n_{\rm te}}
							\ell_i^{\rm te}(x,y).
							\]
							In all robust logistic regression experiments, \(m=5\). Hence this exact
							evaluation requires only \(2^5=32\) vertex evaluations.
							
							The samplewise worst-case losses used in the CVaR criterion are computed in the
							same way. Specifically, for each test sample \(i\), we evaluate
							\[
							L_i^{\rm wc}(x)
							=
							\max_{y\in\mathcal E_r}
							\ell_i^{\rm te}(x,y),
							\qquad i=1,\ldots,n_{\rm te}.
							\]
							After obtaining the samplewise worst-case losses
							\(\{L_i^{\rm wc}(x)\}_{i=1}^{n_{\rm te}}\), we sort them as
							\[
							L_{(1)}^{\rm wc}(x)
							\le
							L_{(2)}^{\rm wc}(x)
							\le
							\cdots
							\le
							L_{(n_{\rm te})}^{\rm wc}(x).
							\]
							The empirical \(\operatorname{CVaR}_{\alpha}\) value in
							\eqref{eq:rlr-cvar-loss} is then evaluated from this sorted list. Let
							\[
							k_\alpha:=\lceil \alpha n_{\rm te}\rceil .
							\]
							An equivalent finite-sample expression for \eqref{eq:rlr-cvar-loss} is
							\[
							\operatorname{CVaR}_{\alpha}(x)
							=
							\frac{
								\bigl(k_\alpha-\alpha n_{\rm te}\bigr)
								L_{(k_\alpha)}^{\rm wc}(x)
								+
								\sum_{i=k_\alpha+1}^{n_{\rm te}}
								L_{(i)}^{\rm wc}(x)
							}{
								(1-\alpha)n_{\rm te}
							}.
							\] 
							
							%
							%
							%
							
							
						\end{APPENDICES}

					\end{document}